\theoremstyle{plain}
\newtheorem{theorem}{Theorem}[section]
\newtheorem{lemma}[theorem]{Lemma}
\newtheorem{proposition}[theorem]{Proposition}
\newtheorem{corollary}[theorem]{Corollary}
\newtheorem{remark}[theorem]{Remark}
\numberwithin{equation}{section}
\newcommand{\Di}  {{\mathbb{D}_\infty}}
\newcommand{\Dt}[1] {{\mathbb{D}_{#1}}}
\newcommand{\Ct}[1] {{\mathbb{C}_{#1}^{\uparrow}}}
\newcommand{\Db}  {{\mathbb D}}
\newcommand{\Nb}  {{\mathbb N}}
\newcommand{\Rb}  {{\mathbb R}}
\newcommand{\Fb}  {{\mathbb F}}
\newcommand{\As} {{\mathcal A}}
\newcommand{\Cs} {{\mathcal C}}
\newcommand{\Fs} {{\mathcal F}}
\newcommand{\Gs} {{\mathcal G}}
\newcommand{\Is} {{\mathcal I}}
\newcommand{\Ls} {{\mathcal L}}
\newcommand{\Ps} {{\mathcal P}}
\newcommand{\Rs} {{\mathcal R}}
\title[Common ancestor type distribution]{Common ancestor type distribution: A Moran model and its deterministic limit}
\author[F.~Cordero]{Fernando Cordero$^1$}
\address{$^1$Faculty of Technology, University of Bielefeld, Universit\"{a}tsstrasse 25, 33615 Bielefeld, Germany. E-mail: fcordero@techfak.uni-bielefeld.de}
\date{\today}%
\begin{document}
\begin{abstract}
We study the common ancestor type distribution in a $2$-type Moran model with population size $N$, mutation and selection, and in the deterministic limit regime arising in the former when $N$ tends to infinity, without any rescaling of parameters or time. In the finite case, we express the common ancestor type distribution as a weighted sum of combinatorial terms, and we show that the latter converges to an explicit function. Next, we recover the previous results through pruning of the ancestral selection graph (ASG). The notions of relevant ASG, finite and asymptotic pruned lookdown ASG permit to achieve this task.
\vspace{.2cm}\\
\textbf{Keywords:} Common ancestor type distribution; ancestral selection graph; lookdown graph; pruning; Moran model with selection and mutation\\
\textbf{Mathematics Subject Classification (2010):} Primary 92D25, 60G09, 60J28; Secondary 60J27, 60J35
\end{abstract}

\maketitle
\section{Introduction}
A variety of (stochastic) models describes the interplay of mutation and selection in populations in the forward direction of time, of which the Wright-Fisher and Moran models appear as major cornerstones (see, e.g., \cite{Du08, Eth}). Corresponding  ancestral processes, starting at present and tracing back the ancestry of individuals into the past, are well studied and currently constitute an active area of research (\cite{KroNe97,NeKro97,Popf13,LKBW15}). It is a common feature of most of these models that at any time there is one individual which is at a later time ancestral to the whole population. Such an individual is called common ancestor. 

In this paper we are interested in the type distribution of the common ancestor in a $2$-type Moran model with mutation and selection and its asymptotic behaviour when the population size tends to infinity. For the latter we consider two different regimes: the diffusion limit and the deterministic limit. In the first regime, the time and the parameters of mutation and selection are rescaled in such a way that, when the population size tends to infinity, the proportion of fit individuals converges to the Wright-Fisher diffusion. By contrast, when the time and the parameters of the model are not rescaled and the population size tends to infinity, the proportion of fit individuals converges to the solution of an ordinary differential equation (see \cite{Co15}). In this case, we talk about the deterministic limit regime. 

In the diffusion limit regime, the common ancestor type distribution has been widely studied (see \cite{Ki62,Ma09,Popf13} for the case without mutation and \cite{Fe02,Ta07,LKBW15} for the general case). Let $h_\infty(x)$ be the probability that the common ancestor is fit given that the current proportion of fit individuals is $x$. In \cite{Ta07}, Taylor shows that $h_\infty$ is the solution of a boundary value problem. In addition, he gives a series expansion for $h_\infty$ in terms of Fearnhead's coefficients (introduced in \cite{Fe02}). In the recent work \cite{LKBW15}, the authors construct a pruned version of the untyped ancestral selection graph, called pruned lookdown ancestral selection graph (pruned LD-ASG). Based on the pruned LDS-ASG, they recover the series expansion for $h_\infty$ in a graphical way. They also show that the Fearnhead's coefficients correspond to the tail probabilities of the stationary number of lines in the LD-ASG. The main goal of this paper is to extend these results to the finite and the deterministic limit setting.

In the Moran model of size $N$ with selection and mutation part of the aforementioned results are also available. Let us denote by $h_k^N$ the probability that the common ancestor is fit given that the initial population has exactly $k$ fit individuals. In \cite{KHB13}, $h_k^N$ is expressed as a weighted finite sum of combinatorial terms. The weights are defined through a $2$-step forward recursion, and we refer to them as Fearnhead-type coefficients. This representation of $h_k^N$ is unfortunately not closed, since one of the equations in the recursion depends on the value $h_{N-1}^N$. In order to complete the picture we provide an analytical and a graphical approach. 

The analytical approach consists of two main steps. We first characterise the Fearnhead-type coefficients as the unique solution of a slightly different recursion depending only on the parameters of selection and mutation. Next, using some elements of the theory of matrices, we show that the Fearnhead-type coefficients correspond to the tail probabilities of some random variable. This approach does not provide any extra graphical meaning beyond the results in \cite{KHB13}. 

The graphical approach permits to characterise the $h_k^N$ as in \cite{KHB13}, and simultaneously, to recover the results we obtained with our first approach. 
In the case without mutation, this can be done as in \cite{Popf13} with the help of the ancestral selection graph (ASG - \cite{KroNe97,NeKro97}). In the presence of mutations part of the ASG becomes irrelevant, and hence, the problem can not be treated in the same way. We solve the problem by means of two ways of pruning the untyped ASG. We define the relevant ancestral selection graph (relevant ASG), and then, following the lines of \cite{LKBW15}, we extend the notion of pruned LD-ASG to the finite population case. Using these constructions, we show that the Fearnhead-type coefficients correspond to the tail probabilities of, on one hand, the asymptotic number of lines in the relevant ASG, and on the other hand, the stationary number of lines in the LD-ASG. Both representations provide a graphical explanation to the representation of the probabilities $h_k^N$ given in \cite{KHB13}. The pruned LD-ASG gives, in addition, a probabilistic interpretation of the Fearnhead-type recursion.

In the deterministic limit framework, we also use two approaches. First, we show that the Fearnhead-type coefficients converge to the tail distribution of a geometric random variable. We deduce that the probabilities $h_k^N$ converge to a function $h$ which is explicitly computed. In order to provide a graphical interpretation, we take an approach similar to the one used in the finite case. The main difficulty is that coalescence events are absent in any suitable asymptotic version of the ancestral selection graph. Therefore, the notion of common ancestor does not make sense anymore. However, the convergence properties of the number of lines in the finite pruned LD-ASG give us a way to define an asymptotic version of the pruned LD-ASG (in the deterministic limit regime). This new object and the notion of representative ancestral type lead to a nice graphical interpretation of the function $h$.

The paper is organized as follows. In Section \ref{s2}, we give a short description of the $2$-type Moran model with selection and mutation, the diffusion limit, the deterministic limit and some known facts about the common ancestor type distribution. Section \ref{sapv} contains the analytical approach to the study of the probabilities $h_k^N$ and their asymptotic behaviour in the deterministic limit setting. Sections \ref{s5} and \ref{s6} are devoted to give graphical interpretations to the problems studied in Section \ref{sapv}. In Section \ref{s5}, we treat the finite case. We first recall the notion of ASG and we introduce the relevant ASG and the lookdown ASG. These objects are used to obtain the desired graphical interpretations. In Section \ref{s6}, we give a meaning to the asymptotic results obtained in Section 4. The paper ends with Appendices \ref{A1} and \ref{bm}. In Appendix \ref{A1}, we provide some technical results about the Skorohod topology which are needed in Section \ref{s6}. Finally, in Appendix \ref{bm} we compare the deterministic limit, with the asymptotic properties of a related $2$-type branching model.

All along the paper, we use the following notation, for $k,m\in\Nb_0$, with $k<m$, $[m]_k:=[k,m]\cap\Nb_0$. When $k=1$, we simply write $[m]$ instead of $[m]_1$.

\section{Preliminaries}\label{s2}
\subsection{\texorpdfstring{The $2$-type Moran model with selection and mutation}{}}\label{pre}
We consider a population of size $N$ in which each individual is characterised by a type $i\in\{0,1\}$. If an individual reproduces, its single offspring  inherits the parent's type and replaces a uniformly chosen individual, possibly its own parent. The replaced individual dies, keeping the size of the population constant. 

Individuals of type $1$ reproduce at rate $1$, whereas individuals of type $0$ reproduce at rate $1+s_N$, $s_N\geq0$. Mutation occurs independently of reproduction. An individual of type $i$ mutates to type $j$ at rate $u_N\,\nu_j$, $u_N\geq 0$, $\nu_j\in(0,1)$, $\nu_0+\nu_1=1$.

The Moran model has a well-known graphical representation as an interacting particle system (see Fig. \ref{fig1so}). Individuals are represented by horizontal lines. Time runs from left to right. Each reproduction event is represented by an arrow with the parent at its tail and the offspring at its head. We decompose reproductions into two kinds of events: neutral and selective. Neutral reproductions are depicted as arrows with filled heads and selective ones as arrows with open heads. Neutral arrows may be used by all individuals, whereas selective arrows may be only used by individuals of type $0$. Mutations to type $0$ are represented by open circles and mutations to type $1$ by filled circles.

More precisely, for each $i,j\in [N]$ with $i\neq j$, $\lambda_{i,j}^{\vartriangle,N}$ and $\lambda_{i,j}^{\blacktriangle,N}$ denote two Poisson processes with respective rates $s_N/N$ and $1/N$. Similarly, for each $i\in [N]$, $\lambda_{i}^{0,N}$ and $\lambda_{i,j}^{1,N}$ stand for two Poisson processes with respective rates $u_N\nu_0$ and $u_N\nu_1$. We assume that all these processes are independent and we refer 
$$\Lambda^N:=\{\lambda_{i}^{0,N},\lambda_{i}^{1,N},\{\lambda_{i,j}^{\vartriangle,N},\lambda_{i,j}^{\blacktriangle,N}\}_{j\in[N]/\{i\}} \}_{i\in[N]}$$ 
as the \textit{reproduction-mutation process}. Now, we draw arrows and circles in the space-time coordinate system $[0,\infty)\times[N]$ as follows. At the arrival times of  $\lambda_{i,j}^{\vartriangle,N}$ and $\lambda_{i,j}^{\blacktriangle,N}$, we draw selective or neutral arrows respectively, going from line $i$ to $j$. At the arrival times of $\lambda_{i}^{0,N}$ and $\lambda_{i,j}^{1,N}$, we draw respectively open or filled circles at line $i$. So far we have constructed an untyped version of the Moran model. Finally, given an initial configuration of types, we propagate the types forward in time respecting reproduction and mutation events.
\ifpdf
\begin{figure}[!ht]
\begin{picture}(0,0)%
\centerline{\includegraphics[width=10.5cm, height=2.8cm]{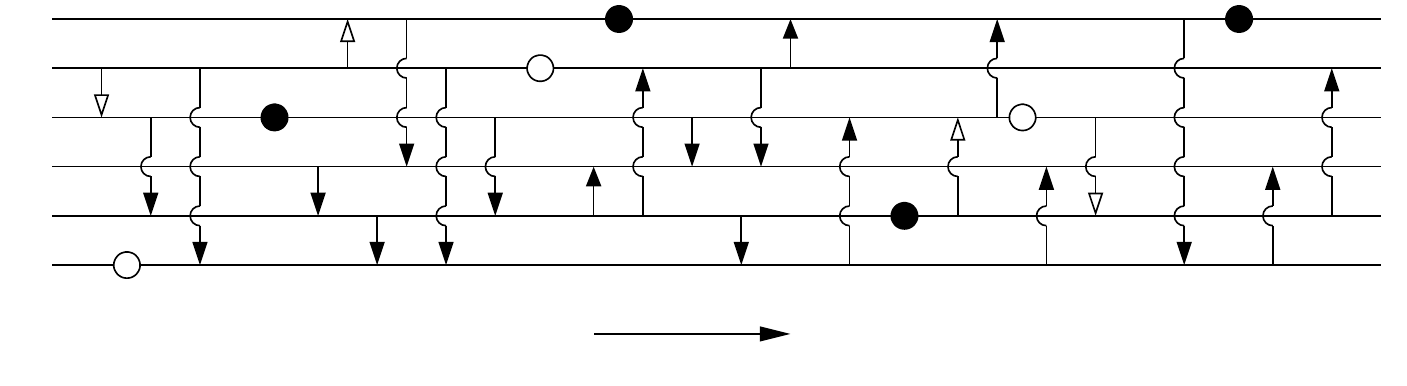}}
\end{picture}%
\setlength{\unitlength}{4144sp}%
\begingroup\makeatletter\ifx\SetFigFont\undefined%
\gdef\SetFigFont#1#2#3#4#5{%
  \reset@font\fontsize{#1}{#2pt}%
  \fontfamily{#3}\fontseries{#4}\fontshape{#5}%
  \selectfont}%
\fi\endgroup%
\centerline{\resizebox*{10.5cm}{2.8cm}{
\begin{picture}(6420,1753)(-14,-1061)
\put(3061,-1006){\makebox(0,0)[lb]{\smash{{\SetFigFont{12}{14.4}{\rmdefault}{\mddefault}{\updefault}{\color[rgb]{0,0,0}$t$}%
}}}}
\put(  1,344){\makebox(0,0)[lb]{\smash{{\SetFigFont{12}{14.4}{\rmdefault}{\mddefault}{\updefault}{\color[rgb]{0,0,0}$0$}%
}}}}
\put(  1,569){\makebox(0,0)[lb]{\smash{{\SetFigFont{12}{14.4}{\rmdefault}{\mddefault}{\updefault}{\color[rgb]{0,0,0}$1$}%
}}}}
\put(  1,-106){\makebox(0,0)[lb]{\smash{{\SetFigFont{12}{14.4}{\rmdefault}{\mddefault}{\updefault}{\color[rgb]{0,0,0}$1$}%
}}}}
\put(  1,-331){\makebox(0,0)[lb]{\smash{{\SetFigFont{12}{14.4}{\rmdefault}{\mddefault}{\updefault}{\color[rgb]{0,0,0}$0$}%
}}}}
\put(  1,-556){\makebox(0,0)[lb]{\smash{{\SetFigFont{12}{14.4}{\rmdefault}{\mddefault}{\updefault}{\color[rgb]{0,0,0}$0$}%
}}}}
\put(6391,-556){\makebox(0,0)[lb]{\smash{{\SetFigFont{12}{14.4}{\rmdefault}{\mddefault}{\updefault}{\color[rgb]{0,0,0}$1$}%
}}}}
\put(6391,-331){\makebox(0,0)[lb]{\smash{{\SetFigFont{12}{14.4}{\rmdefault}{\mddefault}{\updefault}{\color[rgb]{0,0,0}$0$}%
}}}}
\put(6391,-106){\makebox(0,0)[lb]{\smash{{\SetFigFont{12}{14.4}{\rmdefault}{\mddefault}{\updefault}{\color[rgb]{0,0,0}$1$}%
}}}}
\put(6391,119){\makebox(0,0)[lb]{\smash{{\SetFigFont{12}{14.4}{\rmdefault}{\mddefault}{\updefault}{\color[rgb]{0,0,0}$0$}%
}}}}
\put(6391,344){\makebox(0,0)[lb]{\smash{{\SetFigFont{12}{14.4}{\rmdefault}{\mddefault}{\updefault}{\color[rgb]{0,0,0}$0$}%
}}}}
\put(6391,569){\makebox(0,0)[lb]{\smash{{\SetFigFont{12}{14.4}{\rmdefault}{\mddefault}{\updefault}{\color[rgb]{0,0,0}$1$}%
}}}}
\put(  1,119){\makebox(0,0)[lb]{\smash{{\SetFigFont{12}{14.4}{\rmdefault}{\mddefault}{\updefault}{\color[rgb]{0,0,0}$1$}%
}}}}
\end{picture}
}}
\caption{\label{fig1so} Graphical illustration of the Moran model: types are indicated for the initial configuration (left) and the final one (right). Filled  and open circles represent mutations to type 1 and 0 respectively. Neutral reproductions are depicted as arrows with filled heads and selective ones as arrows with open heads.}
\end{figure}
\fi
Relevant information of the evolution of the population is given by the continuous-time Markov chain $X^N:=(X_t^N)_{t\geq 0}$, where $X_t^N$ counts the number of individuals of type $0$ at time $t$. The infinitesimal parameters of $X^N$ are given by
\begin{equation*}
 q_{k,k+\ell}^N:=\left\{ \begin{array}{ll}
                             k\,\frac{(N-k)}{N}\,(1+s_N) +(N-k)\,u_N\,\nu_0& \textrm{if $\ell=1$},\\
                            &\\
                             k\,\frac{(N-k)}{N} +k \,u_N\,\nu_1& \textrm{if $\ell=-1$},\\
                         \end{array}\right.
\end{equation*}
and $q_{k,k+\ell}^{N}:=0$ for $|\ell|>1$.
Equivalently, the infinitesimal generator of $X^N$, denoted by $\As_{X^N}$, is given by
\begin{equation}\label{igen1}
\As_{X^N}f(k):=q_{k,k+1}^N\,\left(f(k+1)-f(k)\right)
+q_{k,k-1}^N\,\left(f(k-1)-f(k)\right).
\end{equation}
In other words, $X^N$ is a birth-death process with birth rates $\lambda_k^N:=q_{k,k+1}^N$ and death rates $\mu_k^N:=q_{k,k-1}^N$.
In particular, when $u_N>0$, $X^N$ admits a unique stationary distribution, which is given by
\begin{equation}\label{sd}
\pi_{X^N}(k):=C_N\prod_{i=1}^k \frac{\lambda_{i-1}^N}{\mu_i^N},\quad k\in[N]_0,
\end{equation}
where $C_N$ is a normalising constant (see \cite{Du08}). When $u_N=0$, by contrast, $X^N$ is an absorbing Markov chain with $0$ and $N$ as absorbing states.
\subsection{The diffusion limit}
A natural diffusion limit arises in the Moran model when the parameters of selection and mutation satisfy
\begin{equation}\label{dlas}
 \lim_{N\rightarrow\infty}Nu_N=\theta\in(0,\infty)\quad\textrm{and}\quad\lim_{N\rightarrow\infty}Ns_N=\sigma\in(0,\infty).
\end{equation}
We briefly recall here its construction (see also \cite[p. 71, Lemma 5.11]{Eth}). Let $Y_t^N$ denote the proportion of fit individuals in the population at time $Nt$, i.e. 
$$Y_t^N:=\frac{1}{N}X_{Nt}^N,\quad t\geq 0.$$
From \eqref{igen1}, the infinitesimal generator of the process $Y^N:=(Y_t^N)_{t\geq 0}$ is given by
\begin{align*}
\As_{Y^N}f(p)=&N^2 p(1-p)\left(f\left(p+N^{-1}\right)+f\left(p-N^{-1}\right)-2f(p)\right)\nonumber\\
&+N^2\left[ s_N\, p(1-p) +(1-p)\,u_N\,\nu_0\right]\left(f\left(p+N^{-1}\right)-f(p)\right)\nonumber\\
&+N^2p\,u_N\,\nu_1\left(f\left(p-N^{-1}\right)-f(p)\right),
\end{align*}
for every $p\in E_N:=\left\{i/N:\,i\in[N]_0\right\}$. Therefore, if $f\in\Cs^3([0,1])$, an appropriate Taylor expansion leads to
$$\lim\limits_{N\rightarrow\infty}\sup\limits_{p\in E_N} |\As_{Y^N}f(p)-\As_Y f(p)|=0,$$
where $\As_Y$ is the generator of the Wright-Fisher diffusion $Y$, i.e.
$$\As_Y f(x):=x(1-x)\frac{d^2f}{dx^2}(x)+ \left[\sigma(1-x)x+\theta\nu_0(1-x)-\theta \nu_1x\right]\frac{df}{dx}(x),\quad x\in[0,1].$$
Consequently, $Y^N\xrightarrow[N\rightarrow\infty]{(d)}Y$ (see \cite[Theorems 1.6.1, 4.2.11 and 8.2.1]{Ku86}).
\subsection{The deterministic limit}\label{s3}
In contrast to the diffusion limit framework, where the parameters of the model satisfy \eqref{dlas}, we consider all along this paper constant parameters of mutation and selection, i.e. $u_N=u\geq 0$ and $s_N=s\geq 0$. In addition, we do not rescale the time. In \cite{Co15} it is shown that a deterministic limit emerges when the size of the population converges to infinity. We summarize here the convergence results related to this deterministic limit. Let $Z^N:=(Z_t^N)_{t\geq 0}$ be the continuous-time Markov chain given by
$Z_t^N:=\frac{1}{N}X_t^N$, $t\geq 0.$
For each $z_0\in[0,1]$, we denote by $z(z_0,\cdot)$ the solution of the following ordinary differential equation
\begin{equation}\label{deteq}
\frac{dz}{dt}(t)=sz(t)\left(1-z(t)\right)+u\nu_0\left(1-z(t)\right)-u\nu_1 z(t),\quad t\geq 0.
\end{equation} 
Eq. \eqref{deteq} has a unique stable point which is given by
\begin{equation}\label{xo+-}
x_0^+:=\left\{ \begin{array}{ll}
                         \frac{s-u+\sqrt{\Delta}}{2s}  & \textrm{if $s>0$},\\
                             
                          \nu_0 & \textrm{if $s=0$},
                         \end{array}\right.
\end{equation}
where $\Delta:=(s-u)^2+4su\nu_0$. In addition, $x_0^+$ satisfies
 \begin{equation}\label{at}
 \lim\limits_{t\rightarrow\infty}z(z_0,t)=x_0^+.
\end{equation}
It is shown in \cite[Proposition 3.1]{Co15} that
\begin{equation*}
\lim_{N\rightarrow\infty}Z_0^N=z_0\in[0,1]\Rightarrow\forall\,\varepsilon, T>0:\quad\lim\limits_{N\rightarrow\infty} P\left(\sup\limits_{t\leq T}|Z_t^N-z(z_0,t)|>\varepsilon\right)=0.
\end{equation*}

Now, we set $g(x):= -(2+s)x^2+ \left(2+s-u(\nu_0-\nu_1)\right)\,x+ u\nu_0$, $x\in\Rb$, and we define the Gaussian diffusion $V^{z_0}:=(V_t^{z_0})_{t\geq 0}$ by
$$V_t^{z_0}:=\left\{ \begin{array}{ll}
                           F(z(z_0,t))\int\limits_0^t \frac{\sqrt{g(z(z_0,v))}}{F(z(z_0,v))}\,dB_v  & \textrm{if $z_0\neq x_0^+$},\\
                             
                           \sqrt{g(x_0^+)}\, e^{-\sqrt{\Delta}\,t}\int\limits_0^t e^{\sqrt{\Delta}\,v}\,dB_v  & \textrm{if $z_0=x_0^+$},
                         \end{array}\right.$$
where $(B_t)_{t\geq 0}$ is a standard Brownian motion. In addition, we introduce the characteristic functions
$\psi^{N}(t,\theta):=E\left[e^{i\theta\sqrt{N}\left(Z_t^N-z(z_0,t)\right)}\right]$ and $\psi(t,\theta):=E\left[e^{i\theta V_t^{z_0}}\right]$. Assuming that $u>0$, \cite[Theorem 3.4]{Co15} tell us that
\begin{equation*}
\lim_{N\rightarrow\infty}\sqrt{N}(Z_0^N-z_0)=0\Rightarrow\lim\limits_{N\rightarrow\infty}\sup_{t\geq 0}|\psi_N(t,\theta)-\psi(t,\theta)|=0.
 \end{equation*}
As a consequence, it is deduced that
\begin{equation}\label{csd}
\pi_{Z^N}\xrightarrow[N\rightarrow\infty]{w}\delta_{x_0^+}
\end{equation}
where $\pi_{Z^N}$ denotes the stationary distribution of $Z^N$ (see \cite[Corollary 3.6]{Co15}).
\begin{remark}
Eq. \eqref{deteq} describes the evolution of the proportion of fit individuals in the well-known $2$-allele paralell mutation-selection (see \cite[p. 265]{CK70}).
\end{remark}
\begin{remark}
The results presented here were obtained in \cite{Co15} with the help of classical results for density dependent families of Markov chains (see \cite{Ku76,Ku81,Ku86}).  
\end{remark}
\subsection{The common ancestor type distribution: known facts}\label{catkf}
It has been shown in \cite{Ta07} that, in the diffusion limit framework, the common ancestor type distribution takes the form
\begin{equation}\label{dilh}
h_\infty(x)=\sum\limits_{n=0}^\infty \alpha_n\, x(1-x)^n,
\end{equation}
where the coefficients $(\alpha_n)_{n\geq0}$ satisfy the following second-order recursion
\begin{equation}
 (n+\theta \nu_1)\alpha_n-(n+\sigma+\theta)\alpha_{n-1}+\sigma \alpha_{n-2}=0,\quad n\geq 2,
\end{equation}
with boundary conditions $\alpha_0=0$ and $\lim_{n\rightarrow\infty} \alpha_{n+1}/\alpha_{n}=0$. The $(\alpha_n)_{n\geq0}$ were introduced in \cite{Fe02} and we refer to them as \textit{Fearnhead's coefficients}. In the recent work \cite{LKBW15}, the authors show that $\alpha_n=P(L_\infty>n)$, where $L_\infty$ denotes the stationary number of lines in the lookdown ancestral selection graph. In addition, a graphical proof of \eqref{dilh} is provided. 

In the finite population case, a representation of the common ancestor type distribution, similar to \eqref{dilh}, is given in \cite{KHB13}. More precisely, a first-step analysis applied to the probabilities $h_k^N$ leads to
\begin{equation}\label{hkak}
 h_k^N=\frac{k}{N}\sum\limits_{n=0}^{N-k} a_n^{N} \prod\limits_{j=0}^{n-1}\frac{N-k-j}{N-1-j},\quad k\in[N]_0,
\end{equation}
where the coefficients $a_n^N$ satisfy, for $n\in[N-1]_2$,
\begin{align}
a_0^N&=1,\label{a0}\tag{$E_0$}\\
a_1^N&=1-N(1-h_{N-1}^N),\label{a1}\tag{$E_1$}\\
\left(\frac{n}{N}+u\nu_1\right)a_n^N&=\left(\frac{n}{N}+\frac{N-(n-1)}{N}s+u\right)a_{n-1}^N-\frac{N-(n-1)}{N}sa_{n-2}^N.\label{an}\tag{$E_n$}
\end{align}

\section{The common ancestor type distribution: an analytical approach}\label{sapv}
As in the diffusion limit setting, in the $2$-type Moran model of size $N$ subject to selection and mutation, at any time $t$, there is a unique individual that is, at some later time $v>t$, ancestral to the whole population (this result follows in the diffusion limit from \cite[Theorem 3.2]{KroNe97}). To see this, 
we fix $t\geq 0$ and we define the \textit{offspring-type process} $\Theta^{t,N}:=(\{\theta_i^{t,N}(v)\}_{i\in [N]},\{j_i^{t,N}(v)\}_{i\in [N]})_{v\geq 0}$ as follows. \\
\textbullet For $i\in [N]$, $\theta_i^{t,N}(0):=\{i\}$ and $j_i^{t,N}(0)$ denotes the type of the individual at line $i$ at time $t$.\\
\textbullet For $v>0$ and $i\in [N]$, $\theta_i^{t,N}(v)$ holds the set of lines occupied at time $t+v$ by descendants of the individual located at line $i$ at time $t$. We denote by $j_i^{t,N}(v)$ the type of the individual placed at line $i$ at time $t+v$.

The offspring-type process is a continuous-time Markov chain with state space
$$\varSigma_N:=\Ps_*([N])\times\{0,1\}^N,$$
where $\Ps_*([N]):=\{\{A_i\}_{i\in [N]}: \forall i\neq j,\, A_i\subset [N],\, A_i\bigcap A_j=\emptyset,\,\mathop{\bigcup}_{k\in [N]} A_k=[N]\}.$
We point out that $\Theta^{t,N}$ can be constructed using the reproduction-mutation process $\Lambda^N$ defined in Section \ref{pre}, or by exhibiting its transition probabilities. 
The set 
$$\Xi_N:=\{(\{A_i\}_{i\in [N]},\{j_i\}_{i\in [N]})\in \varSigma_N: \exists j\in [N],\, A_j=[N]\},$$ 
is a closed set of $\Theta^{t,N}$. From any state outside of $\Xi_N$, $\Theta^{t,N}$ reaches $\Xi_N$ with positive probability and the state space is finite. Therefore, the probability of absorption in $\Xi_N$ is equal to one. This means that the offspring of one of the individuals at time $t$ will fix at a later time. Such individual is called \textit{the common ancestor} at time $t$ and we denote its type by $I_t^N$. The lineage of these individuals over time defines the so-called \textit{ancestral line} (see Fig. \ref{fig2so}). 
\ifpdf
\begin{figure}[!ht]
\begin{picture}(0,0)%
\centerline{\resizebox*{10cm}{3cm}{\includegraphics{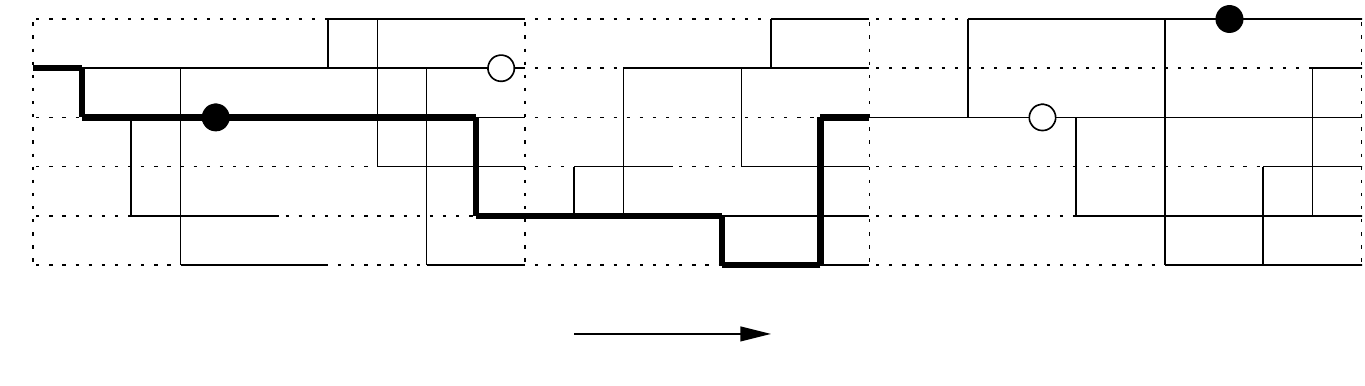}}}
\end{picture}%
\setlength{\unitlength}{4144sp}%
\begingroup\makeatletter\ifx\SetFigFont\undefined%
\gdef\SetFigFont#1#2#3#4#5{%
  \reset@font\fontsize{#1}{#2pt}%
  \fontfamily{#3}\fontseries{#4}\fontshape{#5}%
  \selectfont}%
\fi\endgroup%
\centerline{\resizebox*{10cm}{3cm}{
\begin{picture}(6237,1747)(76,-1061)
\put(3916,-736){\makebox(0,0)[lb]{\smash{{\SetFigFont{12}{14.4}{\rmdefault}{\mddefault}{\updefault}{\color[rgb]{0,0,0}$t=t_2$}%
}}}}
\put(6166,-736){\makebox(0,0)[lb]{\smash{{\SetFigFont{12}{14.4}{\rmdefault}{\mddefault}{\updefault}{\color[rgb]{0,0,0}$t=t_3$}%
}}}}
\put(3061,-1006){\makebox(0,0)[lb]{\smash{{\SetFigFont{12}{14.4}{\rmdefault}{\mddefault}{\updefault}{\color[rgb]{0,0,0}$t$}%
}}}}
\put( 91,-736){\makebox(0,0)[lb]{\smash{{\SetFigFont{12}{14.4}{\rmdefault}{\mddefault}{\updefault}{\color[rgb]{0,0,0}$t=t_0$}%
}}}}
\put(2341,-736){\makebox(0,0)[lb]{\smash{{\SetFigFont{12}{14.4}{\rmdefault}{\mddefault}{\updefault}{\color[rgb]{0,0,0}$t=t_1$}%
}}}}
\end{picture}
}}
\caption{\label{fig2so} Ancestral line corresponding to Fig. \ref{fig1so}. Solide lines represent the descendants of the common ancestor at time $t_i$ in the time period $[t_i,t_{i+1}]$. The bold line represents the ancestral line between $t_0$ and $t_2$. In particular, under the initial configuration of figure \ref{fig1so}, we have $I_{t_0}^6=0$, $I_{t_1}^6=1$,  $I_{t_2}^6=1$.}
\end{figure}
\fi
In this section we recall well-known facts about the common ancestor type distribution in the finite case, and we derive new results reinforcing them. Next, we analyse the asymptotic behaviour of this distribution in the deterministic limit setting. With this in mind, we define $h_k^N$ as the probability that the common ancestor at time $0$ is of type $0$, given that the initial population contains exactly $k$ fit individuals, i.e.
$h_k^N:=P(I_0^N=0\mid X_0^N=k).$
Equivalently, $h_k^N$ is the probability that an initial population of $k$ fit individuals is ancestral to the whole population at some later time.
\begin{remark}
For all $t\geq 0$, the processes $\Theta^{t,N}$ and $\Theta^{0,N}$, under the same initial configuration have the same law. In particular,  
$P(I_t^N=0\mid X_t^N=k)=h_k^N$. 
\end{remark}
\subsection{\texorpdfstring{The case without mutation: $s>0$ and $u=0$}{}}\label{wma}
In this case all the involved quantities can be explicitly computed. Indeed, since there is no mutation, $h_k^N$ is the fixation probability of type $0$ given that in the initial population there are $k$ fit individuals. Thus, setting $T_k^N:=\inf\{t\geq 0: X_t^N=k\}$, \cite[Theorem 6.1]{Du08} yields
\begin{equation*}
 h_k^N=P\left(T_N^N<T_0^N\mid Z_0^N=\frac{k}{N}\right)=\frac{(1+s)^{N-k}-(1+s)^N}{1-(1+s)^N},\quad k\in [N]_0.
\end{equation*}
In other words, if $\Gs^N$ denotes a geometric random variable with parameter $s/(1+s)$ conditioned to be smaller or equal than $N$, then
\begin{equation}\label{rhwm}
 h_k^N=P({\Gs}^N\leq k).
\end{equation}
Since, $\Gs^N\xrightarrow[N\rightarrow\infty]{(d)}\Gs\sim\textrm{Geom}(s/(1+s))$, we get
\begin{equation}\label{hkwuk}
h_k^N\xrightarrow[N\rightarrow\infty]{}1-(1+s)^{-k},\quad k\geq 0.
\end{equation}
This means that the probability $h_k^N$ converges to a strictly positive number, although the initial proportion of fit individuals becomes negligible when $N$ tends to infinity. Moreover, we also have
\begin{equation}\label{hkwukn}
N\geq k_N\xrightarrow[N\rightarrow\infty]{}\infty\Rightarrow h_{k_N}^N\xrightarrow[N\rightarrow\infty]{}1.
\end{equation}
These remarkable facts are in contrast with the behaviour of the corresponding probabilities in the neutral case, where $h_k^N=k/N$. 

On the other hand, if $L_N$ is a binomial random variable with parameters $N$ and $s/(1+s)$ conditioned to be strictly positive, the right hand side of \eqref{rhwm} becomes
\begin{align*}
P(\Gs^N\leq k)&=\sum\limits_{\ell=1}^N P(L_N=\ell)\,\frac{\binom{N}{k}-\binom{N-\ell}{k}}{\binom{N}{k}}=\sum\limits_{\ell=1}^N P(L_N=\ell)\,\frac{\sum\limits_{j=N-\ell}^{N-1}\binom{j}{k}}{\binom{N}{k}}\\
&=\sum\limits_{n=0}^{N-k} P(L_N>n)\frac{k}{N} \,\prod\limits_{j=0}^{n-1}\frac{N-k-j}{N-1-j}.
\end{align*}
Therefore, defining $a_n^N:=P(L_N>n)$, we have
\begin{equation*}
h_k^N= \sum\limits_{n=0}^{N-k}a_n^N\frac{k}{N} \,\prod\limits_{j=0}^{n-1}\frac{N-k-j}{N-1-j}\quad\textrm{and}\quad a_n^N\xrightarrow[N\rightarrow\infty]{}1.
\end{equation*}
\begin{remark}
The convergence results \eqref{hkwuk} and \eqref{hkwukn} also differ from the diffusion limit case, where (see \cite{KB13} and \cite{Ki62})
$$\frac{k_N}{N}\xrightarrow[N\rightarrow\infty]{} x\in[0,1]\Rightarrow h_{k_N}^N\xrightarrow[N\rightarrow\infty]{}\frac{1-e^{-2\sigma x}}{1-e^{-2\sigma}}.$$
\end{remark}
\subsection{The case with mutation}
Henceforth, we assume that $s,u>0$. In contrast to the previous case, when we introduce mutations, there is no fixation of types anymore. Thus a different approach has to be taken. The problem was solved in \cite{KHB13} by studying the Markov process $D^N:=(D^{0,N},D^{1,N},X^{N})$, where $D^{i,N}_t$ holds the number of descendants of type $i$ at time $t$ of an unordered sample with initial composition $(D^{0,N}_0,D^{1,N}_0)$. The relation between $D^N$ and $h_k^N$ is given by
\begin{equation*}
 h_k^N=P\left(\exists t\geq 0:\, D_t^{0,N}+D_t^{1,N}=N\,\mid\,D_0^N=(k,0,k)\right),\quad k\in[N]_0.
\end{equation*}
This representation and a first-step analysis were used in \cite{KHB13} in order to obtain \eqref{hkak} and the recursion {\eqref{an}}$_{n=0}^{N-1}$ defined in Section \ref{catkf}.

\subsubsection{\texorpdfstring{A first characterization of the coefficients $a_k^N$}{}} The inconvenience in the recursion {\eqref{an}}$_{n=0}^{N-1}$ is that it depends on the value of $h_{N-1}^N$. Equation \eqref{an} is a consequence of a probabilistic argument. By contrast, \eqref{a0} and \eqref{a1} follow directly from \eqref{hkak}, plugging in $k=N$ and $k=N-1$ respectively, and they are particular cases of the next result.
\begin{lemma}[Inversion formula]\label{invf}
For all $\ell\in[N]$, we have
\begin{equation*}
 a_{N-\ell}^N=\sum\limits_{k=\ell}^N(-1)^{k+\ell}\binom{k-1}{\ell-1}\binom{N}{k}h_k^N.
\end{equation*}
\end{lemma}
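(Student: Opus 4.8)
The plan is to invert the linear relation \eqref{hkak}, which expresses the vector $(h_k^N)_{k}$ as an explicit lower-triangular transform of the coefficients $(a_n^N)_n$; the asserted formula is simply the inverse transform, so the whole task reduces to verifying that the proposed inverse matrix is indeed inverse, i.e. to a combinatorial binomial identity. Concretely, rewrite \eqref{hkak} in a cleaner form: since $\frac{k}{N}\prod_{j=0}^{n-1}\frac{N-k-j}{N-1-j} = \binom{N-k}{n}\big/\binom{N-1}{n}$, and $\binom{N-1}{n} = \frac{N-n}{N}\binom{N}{n}$, one gets
\begin{equation*}
h_k^N = \sum_{n=0}^{N-k} a_n^N\,\frac{\binom{N-k}{n}}{\binom{N-1}{n}}.
\end{equation*}
Equivalently, setting $m = N-k$ (so $m$ ranges over $[N-1]_0$ when $k\in[N]$), this reads $h_{N-m}^N = \sum_{n=0}^{m} a_n^N \binom{m}{n}/\binom{N-1}{n}$. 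This is a genuinely triangular system in the unknowns $a_n^N$ with invertible (nonzero) diagonal entries $1/\binom{N-1}{n}$, so it has a unique solution, and it only remains to check that the claimed expression solves it.

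The key step is therefore the following: substitute the right-hand side of the Lemma into the identity above and show it returns $h_k^N$. That is, with $\ell = N-n$, I would verify
\begin{equation*}
\sum_{n=0}^{m} \binom{m}{n}\frac{1}{\binom{N-1}{n}}\left(\sum_{k=N-n}^{N}(-1)^{k+N-n}\binom{k-1}{N-n-1}\binom{N}{k}h_k^N\right) = h_{N-m}^N
\end{equation*}
for every $m\in[N-1]_0$. Interchanging the order of summation, the coefficient of each $h_k^N$ on the left is a finite alternating sum of products of binomial coefficients over $n$; the claim is that this coefficient equals $\delta_{k,N-m}$. After routine simplification (using $\binom{m}{n}/\binom{N-1}{n} = \binom{N-1-n}{m-n}/\binom{N-1}{m}$ to put all binomials in convenient position), the required identity should collapse to a standard Vandermonde-type / finite-difference identity — essentially the statement that the matrices with entries $\binom{m}{n}/\binom{N-1}{n}$ and $(-1)^{\ell+n}\binom{\ell-1}{n-1}\binom{N}{\ell}$ (indices appropriately arranged) are mutually inverse. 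An alternative, and probably cleaner, route is to avoid double sums entirely: treat \eqref{hkak} and the Lemma as two linear maps between $\Rb^{N+1}$ and $\Rb^{N}$ (bearing in mind $a_0^N,\dots,a_{N-1}^N$ are $N$ numbers while $h_0^N,\dots,h_N^N$ are $N+1$, with $h_0^N=0$ automatically from \eqref{hkak}), and check $M^{-1}M = \mathrm{Id}$ directly on the triangular blocks.

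The main obstacle I anticipate is purely bookkeeping: getting the index ranges and the $\pm$ signs exactly right, in particular the shift between $n$ and $\ell=N-n$, the fact that the sum in \eqref{hkak} truncates at $N-k$ rather than at $N$, and the edge cases $\ell = N$ (i.e. $a_0^N$, which must reproduce \eqref{a0}) and $\ell = N-1$ (which must reproduce \eqref{a1}, so in particular the formula must be consistent with $a_1^N = 1 - N(1-h_{N-1}^N)$). I would sanity-check the formula against exactly these two special cases first, since they are already known, before pushing through the general combinatorial identity. The combinatorial identity itself, once correctly set up, is classical — it is the inversion pair underlying \eqref{hkak} — so no deep input is needed beyond care with indices.
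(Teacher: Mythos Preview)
Your proposal is correct and is essentially the same approach as the paper's, with one minor difference in direction. You propose to substitute the claimed formula for $a_n^N$ into \eqref{hkak} and recover $h_k^N$ (checking $MM^{-1}=\mathrm{Id}$); the paper does the reverse --- it substitutes \eqref{hkak} for $h_k^N$ into the right-hand side of the Lemma and recovers $a_{N-\ell}^N$ (checking $M^{-1}M=\mathrm{Id}$). After rewriting $\frac{k}{N}\prod_{j=0}^{n-1}\frac{N-k-j}{N-1-j}=\binom{N-n-1}{k-1}/\binom{N}{k}$ and interchanging the order of summation, the paper reduces everything to the single classical identity
\[
\sum_{j=\ell-1}^{m-1}(-1)^{j+\ell-1}\binom{j}{\ell-1}\binom{m-1}{j}=\delta_{\ell,m},
\]
which is exactly the ``standard finite-difference identity'' you anticipated. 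The paper's direction is marginally cleaner because it sidesteps the $N$-versus-$(N{+}1)$ dimension issue you flagged, but both routes are the same computation up to transposition.
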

\begin{proof}
From \eqref{hkak}, we see that
 \begin{align*}
\sum\limits_{k=\ell}^N (-1)^{k+\ell}\binom{k-1}{\ell-1}\binom{N}{k}h_k^N&=\sum\limits_{k=\ell}^N (-1)^{k+\ell}\binom{k-1}{\ell-1}\sum\limits_{n=0}^{N-k} a_n^{N}\binom{N-n-1}{k-1}\\
&=\sum\limits_{n=0}^{N-\ell} a_n^{N}\sum\limits_{k=\ell}^{N-n} (-1)^{k+\ell}\binom{k-1}{\ell-1}\binom{N-n-1}{k-1}\\
&=\sum\limits_{m=\ell}^{N} a_{N-m}^{N}\sum\limits_{j=\ell-1}^{m-1} (-1)^{j+\ell-1}\binom{j}{\ell-1}\binom{m-1}{j}.
 \end{align*}
Thus, the result follows from the following combinatorial identity (see e.g. \cite{Rio64}):
$$\sum\limits_{j=\ell-1}^{m-1} (-1)^{j+\ell-1}\binom{j}{\ell-1}\binom{m-1}{j}=\delta_{\ell,m},$$
where $\delta_{\ell,m}$ denotes the Kronecker delta. 
\end{proof}
The aim now is to replace Equation \eqref{a1} by another one, independent of \eqref{a0} and \eqref{an}, for $ n\in[N-1]_2$, and not involving the values of $h_k^N$.
\begin{lemma}[The missing equation]\label{lme}
We have
\begin{equation}\label{me}\tag{$E_N$}
 \left(1+u+\frac{s}{N}\right)\,a_{N-1}^N=\frac{s}{N}\,a_{N-2}^N.
\end{equation}
\end{lemma}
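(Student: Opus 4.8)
The plan is to derive \eqref{me} by combining the inversion formula of Lemma~\ref{invf} (applied at $\ell = N-1$ and $\ell = N$, i.e. for the coefficients $a_1^N$ and $a_0^N$) with the recursion \eqref{an} taken at $n = N-1$, while eliminating all explicit occurrences of the values $h_k^N$. The key observation is that \eqref{an}$_{n=N-1}$ reads
\begin{equation*}
\left(\tfrac{N-1}{N} + u\nu_1\right)a_{N-1}^N = \left(\tfrac{N-1}{N} + \tfrac{2}{N}s + u\right)a_{N-2}^N - \tfrac{2}{N}s\,a_{N-3}^N,
\end{equation*}
so the claim \eqref{me} is \emph{not} simply this recursion; it is a genuinely new linear relation between $a_{N-1}^N$ and $a_{N-2}^N$ alone. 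Hence the proof must exploit some additional piece of structure, and the natural candidate is a boundary condition reflecting that the $a_n^N$ are built from a probability distribution with total mass $1$ (analogous to $a_0^N = 1$) or, equivalently, an identity that the $h_k^N$ satisfy at the top of the range $k = N-1, N$.

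First I would use Lemma~\ref{invf} with $\ell = N$ and $\ell = N-1$ to write $a_0^N = \binom{N}{N}h_N^N = h_N^N$ and $a_1^N = \sum_{k=N-1}^{N}(-1)^{k+N-1}\binom{k-1}{N-2}\binom{N}{k}h_k^N = N h_{N-1}^N - (N-1)h_N^N$, which combined with $h_N^N = 1$ recovers \eqref{a1}; this is the sanity check that our tools are correctly normalized. Next I would look for the analogue of $a_0^N = 1$ one level down: a closed identity expressing that $h_{N-1}^N$ and $h_{N-2}^N$ are themselves constrained. The most promising route is a first-step (generator) analysis of $h_k^N$ directly at $k = N-1$ and $k = N-2$ using the birth--death rates $\lambda_k^N, \mu_k^N$ from \eqref{igen1}: from state $N-1$ the chain $X^N$ either jumps to $N$ (fixation of type $0$, contributing $1$) or down to $N-2$, giving a relation among $h_N^N = 1$, $h_{N-1}^N$, $h_{N-2}^N$ — but this is circular with the recursion derivation in \cite{KHB13}. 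The cleaner approach is instead to feed the representation \eqref{hkak} into the known recursion \eqref{an} for a \emph{second} value of $n$ and compare: writing $h_{N-2}^N$ via \eqref{hkak} involves $a_0^N, a_1^N, a_2^N$, and the equation $(E_2)$ then expresses $a_2^N$ in terms of $a_1^N$ and $a_0^N$; substituting and simplifying, the dependence on $h_{N-1}^N$ should cancel against the $a_1^N$-term, leaving \eqref{me}.

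Concretely, the steps I would carry out are: (i) expand $h_{N-2}^N$ from \eqref{hkak} as $\tfrac{N-2}{N}\big(a_0^N + \tfrac{2}{N-1}a_1^N + \tfrac{2\cdot 1}{(N-1)(N-2)}a_2^N\big)$; (ii) use $(E_2)$ to eliminate $a_2^N$ in favour of $a_0^N = 1$ and $a_1^N$; (iii) use $(E_1)$, i.e. $a_1^N = 1 - N(1 - h_{N-1}^N)$, to re-introduce $h_{N-1}^N$; (iv) impose the first-step relation linking $h_{N-2}^N$ and $h_{N-1}^N$ (which follows from the generator \eqref{igen1}, using that from $N-1$ one can only reach $N$ or $N-2$, and $h_N^N = 1$); (v) collect terms — the $h_{N-1}^N$ and $h_{N-2}^N$ contributions should drop out, and what remains, after multiplying through by the appropriate power of $N$, is exactly $(1 + u + \tfrac{s}{N})a_{N-1}^N = \tfrac{s}{N}a_{N-2}^N$. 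The main obstacle I anticipate is bookkeeping: getting the first-step relation for $h_{N-1}^N$ in the right form (the rates $\lambda_{N-1}^N = \tfrac{N-1}{N}(1+s) + u\nu_0$ and $\mu_{N-1}^N = \tfrac{N-1}{N} + u\nu_1$ must be plugged in and the identity $\nu_0 + \nu_1 = 1$ used to make the $u$-terms coalesce into the single coefficient $u$ appearing in \eqref{me}), and then checking that all the $\nu_0, \nu_1$ separately disappear, leaving only $u$. A cleaner alternative, if the direct computation gets unwieldy, is to run the inversion formula of Lemma~\ref{invf} at $\ell = N-1$ together with the recursion $(E_{N-1})$ from \cite{KHB13} (the one that still mentions $h_{N-1}^N$), and observe that the $h_{N-1}^N$-terms appearing on the two sides are forced to match, yielding \eqref{me} after cancellation; this trades the generator computation for a purely algebraic manipulation of binomial coefficients, at the cost of invoking the combinatorial identity already used in Lemma~\ref{invf}.
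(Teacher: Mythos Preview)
Your proposal has a genuine gap rooted in an index confusion. The target identity \eqref{me} involves $a_{N-1}^N$ and $a_{N-2}^N$, which in the inversion formula of Lemma~\ref{invf} correspond to $\ell=1$ and $\ell=2$; each of these is an alternating binomial sum over \emph{all} $h_k^N$, $k\in[N]$. Your concrete steps (i)--(v), however, only manipulate $a_0^N,a_1^N,a_2^N$ (which arise from $\ell=N,N-1,N-2$ and involve only $h_N^N,h_{N-1}^N,h_{N-2}^N$). Nothing in that chain of substitutions can produce $a_{N-1}^N$ or $a_{N-2}^N$; at best you would recover a consistency check among the top three $h$-values, not \eqref{me}. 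The ``cleaner alternative'' at the end suffers from the same mismatch: the inversion at $\ell=N-1$ yields $a_1^N$, which has nothing to do with the three coefficients $a_{N-1}^N,a_{N-2}^N,a_{N-3}^N$ appearing in $(E_{N-1})$. A second problem is step~(iv): with mutation, the $h_k^N$ do \emph{not} satisfy the harmonic relation $(\lambda_k^N+\mu_k^N)h_k^N=\lambda_k^N h_{k+1}^N+\mu_k^N h_{k-1}^N$ coming from the generator of $X^N$; the correct relation, for $\tilde\psi_k^N:=(h_k^N-k/N)/(k(N-k))$, is Eq.~\eqref{rtpsi}, which carries an inhomogeneous term $s/N^2$.

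The paper's proof proceeds along the lines your plan \emph{should} have taken: apply the inversion formula at $\ell=1,2$ to write $a_{N-1}^N$ and $a_{N-2}^N$ as full alternating sums $\sum_{k}(-1)^{k+1}\binom{N}{k}\psi_k^N$ and $\sum_{k}(-1)^{k}\binom{N}{k}(k-1)\psi_k^N$ (with $\psi_k^N=h_k^N-k/N$), form the combination $\Delta_N:=(1+u+s/N)a_{N-1}^N-(s/N)a_{N-2}^N$, and then recognise the resulting sum as a telescoping consequence of the recursion \eqref{rtpsi} summed against $(-1)^{k+1}\binom{N-2}{k-1}$ over all $k\in[N-1]$. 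The inhomogeneous $s/N^2$ terms disappear because $\sum_{k=1}^{N-1}(-1)^{k+1}\binom{N-2}{k-1}=0$, and the remaining terms rearrange to give exactly $\Delta_N/N=0$. In short, the proof is global (all $k$ contribute) rather than local (boundary $k$ only), and it hinges on \eqref{rtpsi} rather than on a naive first-step identity for $h_k^N$.
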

\begin{proof}
First, let us denote $\psi_k^N:=h_k^{N}-\frac{k}{N}$. From Lemma \ref{invf} and the following combinatorial identities, which can be derived from the binomial theorem:
$$\sum\limits_{k=1}^N(-1)^{k+1}\binom{N}{k}k=0\quad\textrm{and}\quad \sum\limits_{k=2}^N(-1)^{k}\binom{N}{k}(k-1)k=0,$$
we deduce that
$$a_{N-1}^N=\sum\limits_{k=1}^N(-1)^{k+1}\binom{N}{k}\psi_k^N\quad\textrm{and}\quad a_{N-2}^N=\sum\limits_{k=2}^N(-1)^{k}\binom{N}{k}(k-1)\psi_k^N.$$
The previous identities and the definition of $\lambda_k^N$ and $\mu_k^N$ lead to
\begin{align*}
\Delta_N:&= \left(1+u+\frac{s}{N}\right)\,a_{N-1}^N-\frac{s}{N}\,a_{N-2}^N=\sum\limits_{k=1}^N(-1)^{k+1}\binom{N}{k}\psi_k^N\left(1+u+\frac{s\, k}{N}\right)\\
&=\sum\limits_{k=1}^N(-1)^{k+1}\binom{N}{k}\psi_k^N\left(\frac{\lambda_k^N}{N-k}+\frac{\mu_k^N}{k}\right).
 \end{align*}
Defining $\tilde{\psi}_0^N:=\tilde{\psi}_N^N:=0$, and, for $k\in[N-1]$, $\tilde{\psi}_k^N:=\frac{\psi_{k}}{k(N-k)}$, we have
\begin{align*}
\sum\limits_{k=1}^N(-1)^{k+1}\binom{N}{k}\psi_k^N\frac{\lambda_k^N}{N-k}& =N\sum\limits_{k=1}^{N-1}(-1)^{k+1}\binom{N-1}{k-1}\tilde{\psi}_k^N\lambda_k^N,
\end{align*}
and
\begin{align*}
\sum\limits_{k=1}^N(-1)^{k+1}\binom{N}{k}\psi_k^N\frac{\mu_k^N}{k}& =N\sum\limits_{k=1}^{N-1}(-1)^{k+1}\binom{N-1}{k}\tilde{\psi}_k^N\mu_k^N.
\end{align*}
As a consequence, we obtain
\begin{equation}\label{dn}
\frac{\Delta_N}{N}= \sum\limits_{k=1}^{N-1}(-1)^{k+1}\binom{N-1}{k-1}\tilde{\psi}_k^N\lambda_k^N+\sum\limits_{k=1}^{N-1}(-1)^{k+1}\binom{N-1}{k}\tilde{\psi}_k^N\mu_k^N.
\end{equation}
In addition, we know from \cite[Eq. (25), (26) and (27)]{KHB13} that, for all $k\in[N-1]$:
\begin{equation}\label{rtpsi}
(\lambda_k^N+\mu_k^N)\,\tilde{\psi}_k^N =\lambda_{k+1}^N\,\tilde{\psi}_{k+1}^N +\mu_{k-1}^N\,\tilde{\psi}_{k-1}^N+\frac{s}{N^2} 
\end{equation}
Multiplying \eqref{rtpsi} by $(-1)^{k+1}\binom{N-2}{k-1}$ and performing the sum over $k\in[N-1]$ yields
\begin{align*}
 \sum\limits_{k=1}^{N-1}&(-1)^{k+1}\binom{N-2}{k-1}(\lambda_k^N+\mu_k^N)\,\tilde{\psi}_k^N=\sum\limits_{k=1}^{N-1}(-1)^{k+1}\binom{N-2}{k-1}\lambda_{k+1}^N\,\tilde{\psi}_{k+1}^N\\
 &+\sum\limits_{k=1}^{N-1}(-1)^{k+1}\binom{N-2}{k-1}\mu_{k-1}^N\,\tilde{\psi}_{k-1}^N +\frac{s}{N^2}\sum\limits_{k=1}^{N-1}(-1)^{k+1}\binom{N-2}{k-1}.
\end{align*}
The last sum equals zero as a consequence of the binomial theorem. Rearranging the sums, we obtain
$$\sum\limits_{k=1}^{N-1}(-1)^{k+1}\binom{N-1}{k-1}\lambda_k^N\tilde{\psi}_k^N+\sum\limits_{k=1}^{N-1}(-1)^{k+1}\binom{N-1}{k}\mu_k^N\tilde{\psi}_k^N=0.$$
This identity together with \eqref{dn} implies that $\Delta_N=0$ and the proof is completed.
\end{proof}
The next result tells us that the coefficients $(a_n^N)_{n=0}^{N-1}$ are characterised by the equations \eqref{a0} and \eqref{an}$_{n\in[N]_2}$.
\begin{lemma}[Uniqueness and positivity of the coefficients]\label{uap}
The system of equations given by \eqref{a0} and \eqref{an}$_{n\in[N]_2}$ has a unique solution $(a_n^N)_{n=0}^{N-1}$, which is in addition, coordinate by coordinate, strictly positive.
\end{lemma}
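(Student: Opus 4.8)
The plan is to rewrite the system \eqref{a0} and \eqref{an}$_{n\in[N]_2}$ in a form where existence, uniqueness, and positivity can all be read off. The recursion \eqref{an} is a \emph{forward} two-step recursion: given $a_{n-1}^N$ and $a_{n-2}^N$, it determines $a_n^N$ uniquely, since the coefficient of $a_n^N$ on the left-hand side, namely $n/N+u\nu_1$, is strictly positive for $n\in[N]_2$ (recall $u,\nu_1>0$). Thus, once $a_0^N$ and $a_1^N$ are fixed, the whole vector $(a_n^N)_{n=0}^{N-1}$ is determined. Equation \eqref{a0} fixes $a_0^N=1$. The only thing missing is a value for $a_1^N$, and the point of Lemma \ref{lme} is precisely that the ``missing equation'' \eqref{me} pins it down: I would argue that the map $a_1^N\mapsto(a_1^N,\dots,a_{N-1}^N)$ obtained by running \eqref{an} forward is affine in $a_1^N$, hence \eqref{me}, which is one affine equation in the two entries $a_{N-2}^N,a_{N-1}^N$, becomes one affine equation in the single unknown $a_1^N$. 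So existence and uniqueness reduce to checking that this affine equation is non-degenerate, i.e. that the coefficient of $a_1^N$ in it is nonzero.

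For the non-degeneracy and, more importantly, for positivity, I would bring in the probabilistic interpretation already available in the excerpt. In the no-mutation subsection it is shown that with $a_n^N:=P(L_N>n)$ for an appropriate conditioned binomial $L_N$, the formula \eqref{hkak} holds and the recursion is satisfied; more to the point, for general $s,u>0$ the companion papers (and the later sections announced in the introduction) identify $a_n^N$ with tail probabilities $P(L^N>n)$ of the stationary line count in the (pruned LD-)ASG. Rather than invoke that machinery, though, the self-contained route is: Lemmas \ref{invf} and \ref{lme} together show that the numbers $b_n^N$ defined by $b_0^N:=1$, $b_1^N$ := (the unique solution of \eqref{me} written via \eqref{an} as above), and $b_n^N$ by \eqref{an} forward, coincide with the coefficients $a_n^N$ appearing in \eqref{hkak} for the \emph{true} $h_k^N$ — because the true $h_k^N$ satisfy \eqref{a0}, \eqref{a1}, \eqref{an} (from \cite{KHB13}) and \eqref{me} (Lemma \ref{lme}), and the inversion formula of Lemma \ref{invf} shows \eqref{hkak} determines the $a_n^N$ from the $h_k^N$ uniquely. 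Hence the unique solution of \eqref{a0}, \eqref{an}$_{n\in[N]_2}$, \eqref{me} is exactly the vector of genuine Fearnhead-type coefficients.

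Positivity is then the remaining substantive claim. Here I would use the representation $h_k^N = P(I_0^N=0\mid X_0^N=k)$ together with the inversion formula \eqref{invf}, but that expresses $a_{N-\ell}^N$ as an alternating sum, so positivity is not transparent. The cleaner argument is monotonicity: show that $k\mapsto h_k^N$ is strictly increasing and strictly convex in the discretized sense needed, or more directly, exploit that $a_n^N$ is a tail probability. Concretely I expect to proceed by strong induction on $n$ using \eqref{an} rewritten as
\[
a_n^N = \frac{\tfrac{n}{N}+\tfrac{N-(n-1)}{N}s+u}{\tfrac{n}{N}+u\nu_1}\,a_{n-1}^N - \frac{\tfrac{N-(n-1)}{N}s}{\tfrac{n}{N}+u\nu_1}\,a_{n-2}^N,
\]
establishing simultaneously $a_n^N>0$ and the strict decrease $a_n^N<a_{n-1}^N$ (which is the tail-probability monotonicity); the decrease is what prevents the negative term from dominating, since $a_{n-2}^N>a_{n-1}^N$ and a short estimate on the two coefficients closes the induction. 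The base case needs $0<a_1^N<1$, i.e. $0<a_1^N=1-N(1-h_{N-1}^N)$, equivalently $1-\tfrac1N<h_{N-1}^N<1$, which follows from $h_{N-1}^N<h_N^N=1$ and from $h_{N-1}^N\ge h_{N-1}^N$ being bounded below via the explicit lower bound $h_k^N\ge k/N$ used implicitly in Lemma \ref{lme} (indeed $h_{N-1}^N\ge (N-1)/N$ with strict inequality because of selection).

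The main obstacle I anticipate is the positivity/monotonicity induction: getting the coefficient estimate sharp enough that the subtracted term $\tfrac{N-(n-1)}{N}s\,a_{n-2}^N$ never overwhelms $\bigl(\tfrac{n}{N}+\tfrac{N-(n-1)}{N}s+u\bigr)a_{n-1}^N$ requires carrying the auxiliary inductive hypothesis $a_{n-1}^N\le a_{n-2}^N$ (or a quantitative ratio bound $a_{n-1}^N/a_{n-2}^N\ge c_n$) in lockstep, and verifying the resulting elementary inequality in $n,N,s,u,\nu_1$. The uniqueness part, by contrast, is essentially bookkeeping once one observes that \eqref{an} run forward is affine in the single free parameter $a_1^N$ and that \eqref{me} fixes it.
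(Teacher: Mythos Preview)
Your approach diverges substantially from the paper's, which is purely matrix-theoretic: the system is written as $A_N\,a=e_1$ with $A_N$ tridiagonal; strict diagonal dominance gives invertibility (hence existence and uniqueness) via L\'evy--Desplanques, and positivity of the first column of $A_N^{-1}$ is then obtained by combining the Gerschgorin circle theorem, the reality of eigenvalues for tridiagonal matrices with $b_ic_{i-1}\ge 0$, and Usmani's explicit formula for the inverse of a tridiagonal matrix. No induction on the $a_n^N$ themselves is used, and no probabilistic input is needed.

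Your uniqueness sketch is essentially correct but incomplete: you reduce it to checking that the affine map $a_1^N\mapsto\bigl(1+u+\tfrac{s}{N}\bigr)a_{N-1}^N-\tfrac{s}{N}a_{N-2}^N$ is non-degenerate, yet you do not carry out this check. It does go through: running \eqref{an} forward from $a_0=0$, $a_1=1$ gives (by an easy induction) $a_n>a_{n-1}>0$ for all $n$, whence $\bigl(1+u+\tfrac{s}{N}\bigr)a_{N-1}>\tfrac{s}{N}\,a_{N-2}$, so \eqref{me} fails for the homogeneous solution and the coefficient of $a_1^N$ is nonzero.

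The genuine gap is positivity. Rewriting \eqref{an} as
\[
\Bigl(\tfrac{n}{N}+u\nu_1\Bigr)\bigl(a_n^N-a_{n-1}^N\bigr)
= u\nu_0\,a_{n-1}^N+\tfrac{N-n+1}{N}\,s\,\bigl(a_{n-1}^N-a_{n-2}^N\bigr),
\]
you see that a \emph{forward} step cannot yield $a_n^N<a_{n-1}^N$ from $a_{n-1}^N<a_{n-2}^N$ alone: the right-hand side is the sum of a strictly positive term and a negative one, with no a priori reason for the negative one to dominate. Carrying a quantitative ratio bound, as you propose, collides with the \emph{opposite} inequality needed to keep $a_n^N>0$, and there is no evident two-sided bound that propagates. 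You flag this as an ``obstacle'', but it is not merely a technicality to be worked out---the forward scheme does not close. This is exactly why the paper decouples the two claims: positivity is obtained first by the matrix argument, and only afterwards (in Proposition~\ref{chan}) is monotonicity proved by \emph{backward} induction from \eqref{me}, where the already-established positivity of $u\nu_0\,a_{n-1}^N$ is what forces $a_{n-1}^N-a_{n-2}^N<0$ once $a_n^N-a_{n-1}^N\le 0$. Your appeal to the tail-probability interpretation for positivity is circular here, since that is the content of Proposition~\ref{chan}, which in the paper relies on Lemma~\ref{uap}; and the base-case input $h_{N-1}^N>(N-1)/N$ you invoke is not established at this point either.
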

\begin{proof} We first write the underlying system of equations as follows:
\begin{equation}\label{axb}
\begin{pmatrix}
       d_0&c_0&0&\cdots&0\\
       b_1&d_1&c_1&\ddots&\vdots\\
       0&\ddots&\ddots&\ddots&0\\
       \vdots&\ddots&b_{N-2}&d_{N-2}&c_{N-2}\\
       0&\cdots&0&b_{N-1}&d_{N-1}\\
\end{pmatrix}
\begin{pmatrix}
       a_0^N\\
       a_1^N\\
       \vdots\\
       a_{N-2}^N\\
       a_{N-1}^N\\
\end{pmatrix}=\begin{pmatrix}
       1\\
       0\\
       \vdots\\
       0\\
       0\\
\end{pmatrix},
\end{equation}
where $d_0:=1$, $c_0:=0$,
$$d_n:=\frac{n+1}{N}+\frac{(N-n)s}{N}+u,\quad b_n:=-\frac{(N-n)s}{N},\quad n\in[N-1],$$
and
$$c_n:=-\frac{n+1}{N}-u\nu_1,\quad n\in[N-1].$$
The matrix $A_N$ in \eqref{axb} is strictly diagonally dominant, and hence invertible by the L\'{e}vy-Desplanques Theorem (see for ex. \cite[Theorem 6.1.10]{HJ85}). Thus, \eqref{axb} has a unique solution given by the first column of $A_N^{-1}$. It remains to prove its positivity.

Since $A_N$ is strictly diagonally dominant and all its diagonal entries are strictly positive, the Gerschgorin circle Theorem  implies that all the eigenvalues of $A_N$ have strictly positive real parts (see \cite[Theorem 6.1.1]{HJ85}). In addition, $A_N$ is tridiagonal and $b_i\,c_{i-1}\geq0,$ for all $i\in[N-1]$. Therefore, all its eigenvalues are real (see \cite[p. 174, Problem 5]{HJ85}). Summarizing, all the eigenvalues of $A_N$ are real and strictly positive. The same holds for the sub-matrices $A_N^{(n)}$ consisting of the first $n$ rows and columns of $A_N$ (with $A_N^{(n)}=A_N$). Then, for each $n\in[N]$, $\theta_n^N:=\det(A_N^{(n)})>0$.

Furthermore, since $A_N$ is tridiagonal, \cite[Theorem 2]{Us94} yields
$$(A_N^{-1})_{1,1}=\frac{\phi_2^N}{\theta_N^N}\quad\textrm{and}\quad(A_N^{-1})_{i,1}=|b_1\times\cdots \times b_i|\frac{\phi_{i+1}^N}{\theta_N^N},$$
where $\phi_{N+1}^N=1$, $\phi_{N+2}^N=0$, and from \cite[Lemma 2]{Us94}
$$\theta_k^N\phi_{k+1}^N=\theta_N^N+b_{k} c_{k-1}\theta_{k-1}^N\phi_{k+2}^N,\quad k\in[N].$$
This recursion also tells us that the coefficients $\phi_k^N$ are all positive, and therefore the same holds for the first column of $A_N^{-1}$. The proof is completed.
\end{proof}
\begin{proposition}\label{chan}
There is a random variable $L_N$ with values on $[N]$ such that
$$a_n^N=P(L_N> n),\qquad n\in[N-1]_0.$$
\end{proposition}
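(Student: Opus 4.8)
The plan is to show that, once the sequence is completed by the natural convention $a_N^N:=0$, one has the strict chain $1=a_0^N>a_1^N>\dots>a_{N-1}^N>a_N^N=0$. If so, then $P(L_N=n):=a_{n-1}^N-a_n^N$, $n\in[N]$, defines the law of a random variable with values in $[N]$, and a telescoping sum gives $P(L_N>n)=\sum_{m=n+1}^N(a_{m-1}^N-a_m^N)=a_n^N-a_N^N=a_n^N$ for every $n\in[N-1]_0$, which is the assertion.

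First I would observe that, under the convention $a_N^N:=0$, equation \eqref{me} of Lemma \ref{lme} is exactly the instance $n=N$ of \eqref{an}; hence \eqref{an} holds for every $n\in[N]_2$. Set $\alpha_n:=\tfrac nN+u\nu_1$ and $\beta_n:=\tfrac{N-n+1}{N}\,s$, so that \eqref{an} reads $\alpha_n a_n^N=(\alpha_n+u\nu_0+\beta_n)a_{n-1}^N-\beta_n a_{n-2}^N$ (using $u-u\nu_1=u\nu_0$). Subtracting $\alpha_n a_{n-1}^N$ from both sides and regrouping in terms of the successive differences $q_n:=a_{n-1}^N-a_n^N$ gives
\begin{equation*}
\alpha_n\,q_n=\beta_n\,q_{n-1}-u\nu_0\,a_{n-1}^N,\qquad n\in[N]_2 ,
\end{equation*}
i.e. $q_{n-1}=\bigl(\alpha_n\,q_n+u\nu_0\,a_{n-1}^N\bigr)/\beta_n$.

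The crucial point is to read this relation backwards. Since $s,u>0$, $\nu_0,\nu_1\in(0,1)$, $1\le n\le N$, and $a_{n-1}^N>0$ by Lemma \ref{uap}, the numbers $\alpha_n$, $\beta_n$ and $u\nu_0\,a_{n-1}^N$ are all strictly positive; hence $q_n\ge 0$ implies $q_{n-1}>0$. The induction is started at the top by $q_N=a_{N-1}^N-a_N^N=a_{N-1}^N>0$ (again by Lemma \ref{uap}), and running it down over $n=N,N-1,\dots,2$ yields $q_n>0$ for every $n\in[N]$. This establishes the strict chain above; since moreover $\sum_{n=1}^N q_n=a_0^N-a_N^N=1$, the $q_n$ form a probability vector on $[N]$, which completes the construction of $L_N$.

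I expect the only genuinely delicate part to be the monotonicity $a_{n-1}^N\ge a_n^N$. The point worth stressing is that the term $u\nu_0\,a_{n-1}^N$ in \eqref{an} is precisely what would obstruct a forward argument starting from $a_0^N=1$, whereas it is exactly this positive term that drives the backward induction; the two auxiliary facts one really needs are the coordinatewise positivity of the coefficients (Lemma \ref{uap}) and the missing equation (Lemma \ref{lme}), absorbed here as the case $n=N$.
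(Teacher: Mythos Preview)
Your argument is correct and is essentially the same as the paper's: both rewrite the recursion \eqref{an} in terms of the successive differences, use the positivity of the $a_n^N$ from Lemma \ref{uap}, and run a backward induction. The only cosmetic difference is that you absorb Lemma \ref{lme} as the instance $n=N$ via the convention $a_N^N:=0$ and start the induction at $q_N=a_{N-1}^N>0$, whereas the paper starts one step later at $a_{N-2}^N-a_{N-1}^N\ge 0$ deduced directly from \eqref{me}; this is a presentational choice and does not change the substance of the proof.
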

\begin{proof}
We claim that the function $n\mapsto a_n^N$ is decreasing. If this is true, we define $\rho_n^N:=a_{n-1}^N-a_n^{N}$, $n\in[N-1]$ and $\rho_{N}^N:=a_{N-1}^N$,
and deduce, for all $n\in[N]$, that
$$\rho_n^N\geq 0\quad\textrm{ and }\quad \sum\limits_{k=1}^N\rho_k^N=a_0^N=1.$$
Consequently, there is a random variable $L_N$ with values in $[N]$ satisfying that $P(L_N=k)=\rho_k^N$. The desired result follows. It remains to prove our claim.\\
From \eqref{me} and Lemma \ref{uap}, we have $a_{N-2}^N-a_{N-1}^N\geq 0.$
In addition, for $n\in[N-1]_2$,
$$\left(\frac{n}{N}+u\nu_1\right)(a_n^N-a_{n-1}^N)=u\nu_0 a_{n-1}^N+\frac{N-(n-1)}{N}s(a_{n-1}^N-a_{n-2}^N),$$
and the claim follows using a backward induction.
\end{proof}
Let now $(\xi_1,...,\xi_N)$ be a vector of random variables with values in $\{0,1\}^N$ with the following distribution
$$P\left((\xi_1,...,\xi_N)=(i_1,...,i_N)\right):=\frac{P\left(L_N=N-r\right)}{{\binom{N}{r}}},\quad\textrm{if}\quad \sum\limits_{j=1}^N i_j=r,$$
and define $\mathcal{G}^N:=\min\{i\geq 1: \xi_i=0\}.$
\begin{corollary}
 For all $k\in[N]_0$, we have $h_k^N=P(\Gs^N\leq k).$
\end{corollary}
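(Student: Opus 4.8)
\emph{Proof proposal.} The plan is to evaluate $P(\Gs^N\le k)$ directly from the exchangeable law of $(\xi_1,\dots,\xi_N)$ and to match the outcome with the representation \eqref{hkak} of $h_k^N$, replacing $a_n^N$ by $P(L_N>n)$ as permitted by Proposition \ref{chan}. First I would note that $\Gs^N>k$ precisely when $\xi_1=\dots=\xi_k=1$, so that $P(\Gs^N\le k)=1-P(\xi_1=\dots=\xi_k=1)$. To compute the last probability I sum the given mass function over all $(i_1,\dots,i_N)$ with $i_1=\dots=i_k=1$, grouping the configurations by their number of ones $r:=\sum_{j=1}^N i_j$; for each $r\in[N]_k$ there are $\binom{N-k}{r-k}$ admissible configurations, each of mass $P(L_N=N-r)/\binom{N}{r}$, whence
$$P(\xi_1=\dots=\xi_k=1)=\sum_{r=k}^N\binom{N-k}{r-k}\frac{P(L_N=N-r)}{\binom{N}{r}}.$$

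Next I would substitute $\ell:=N-r\in[N-k]_0$ and use the symmetry of the binomial coefficients together with the identity $\binom{N-k}{\ell}/\binom{N}{\ell}=\binom{N-\ell}{k}/\binom{N}{k}$ to rewrite the right-hand side as $\sum_{\ell=0}^{N-k}(\binom{N-\ell}{k}/\binom{N}{k})\,P(L_N=\ell)$. Since $L_N$ is $[N]$-valued, $\sum_{\ell=1}^N P(L_N=\ell)=1$, and $\binom{N-\ell}{k}=0$ whenever $\ell>N-k$; subtracting, this gives
$$P(\Gs^N\le k)=\sum_{\ell=1}^N P(L_N=\ell)\,\frac{\binom{N}{k}-\binom{N-\ell}{k}}{\binom{N}{k}},$$
which is exactly the quantity already manipulated in Section \ref{wma}.

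From here the argument is purely combinatorial. Setting $c_n:=\binom{N-n}{k}/\binom{N}{k}$ and using $P(L_N=\ell)=a_{\ell-1}^N-a_\ell^N$ (with the conventions $a_0^N=1$ and $a_N^N:=P(L_N>N)=0$), an Abel summation turns the last display into $\sum_{n=0}^{N-k}(c_n-c_{n+1})\,a_n^N$, the boundary terms being absorbed via $c_0=1$, $a_0^N=1$ and $\binom{k-1}{k}=0$. Pascal's rule gives $c_n-c_{n+1}=\binom{N-n-1}{k-1}/\binom{N}{k}$, and a rearrangement of factorials yields $\binom{N-n-1}{k-1}/\binom{N}{k}=\frac{k}{N}\prod_{j=0}^{n-1}\frac{N-k-j}{N-1-j}$; combined with $a_n^N=P(L_N>n)$ this turns $\sum_{n=0}^{N-k}(c_n-c_{n+1})a_n^N$ into the right-hand side of \eqref{hkak}, i.e. into $h_k^N$. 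There is no genuine obstacle here: the only delicate points are the bookkeeping of the boundary terms in the Abel summation and the verification that $c_n-c_{n+1}$ matches the product occurring in \eqref{hkak}. In fact even this can be bypassed, since the chain of equalities in Section \ref{wma} that leads from $\sum_{\ell}P(L_N=\ell)(\binom{N}{k}-\binom{N-\ell}{k})/\binom{N}{k}$ to $\sum_{n}P(L_N>n)\frac{k}{N}\prod_{j=0}^{n-1}\frac{N-k-j}{N-1-j}$ uses nothing about the particular law of $L_N$ and therefore applies here verbatim.
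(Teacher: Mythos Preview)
Your proposal is correct and follows essentially the same route as the paper: both arrive at $P(\Gs^N\le k)=\sum_{\ell=1}^N P(L_N=\ell)\,\bigl(\binom{N}{k}-\binom{N-\ell}{k}\bigr)/\binom{N}{k}$ and then invoke the combinatorial identity already spelled out in Section~\ref{wma} together with Proposition~\ref{chan} and \eqref{hkak}. The only cosmetic difference is that the paper conditions on $\sum_i\xi_i$ to compute $P(\Gs^N\le k)$ directly, whereas you compute $P(\Gs^N>k)=P(\xi_1=\dots=\xi_k=1)$ and take the complement.
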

\begin{proof}
The proof is very similar to the case without mutation. Indeed, we can decompose the distribution of $\Gs^N$ in the following way:
 \begin{align*}
P(\Gs^N\leq k)&=\sum\limits_{\ell=1}^N P\left(\sum\limits_{i=1}^N\xi_i=N-\ell\right)P\left(\Gs^N\leq k\,\Big|\,\sum\limits_{i=1}^N\xi_i=N-\ell\right)\\
&=\sum\limits_{\ell=1}^N P(L_N=\ell)\,\frac{\binom{N}{k}-\binom{N-\ell}{k}}{\binom{N}{k}}
=\sum\limits_{n=0}^{N-k} \frac{\binom{N-n-1}{k-1}}{\binom{N}{k}}P(L_N>n).
\end{align*}
The result follows from Proposition \ref{chan} and Equation \eqref{hkak}.
\end{proof}
\subsubsection{\texorpdfstring{Asymptotic behaviour of the probabilities $h_k^N$}{}} 
In order to understand the limit behaviour of the common ancestor type distribution, we first study the coefficients $(a_n^N)_{n=0}^{N-1}$. Let us assume for a moment that these coefficients admit a limit when $N$ converges to infinity. In this case, if we fix $n\geq 2$ and we take the limit when $N$ tends to infinity in \eqref{an}, we see that the limit coefficients $(a_k)_{k\geq0}$ should satisfy the following recurrence relation 
\begin{equation}\label{anl}
0=u\nu_1\,a_n-\left(s+u\right)\,a_{n-1}+s\,a_{n-2}.
\end{equation}
\begin{lemma}\label{ane}
The solution of \eqref{anl} has the form 
$$a_n= \frac{(a_0\ell_- -a_1)\,\ell_+^n}{\ell_- -\ell_+} + \frac{(a_1-\ell_+ a_0)\,\ell_-^n}{\ell_- -\ell_+},$$
where
$\ell_\pm=\frac{s+u\pm\sqrt{(s-u)^2+4su\nu_0}}{2u\nu_1}.$
\end{lemma}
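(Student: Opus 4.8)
The plan is to treat \eqref{anl} as a standard linear homogeneous recurrence with constant coefficients and solve it explicitly via its characteristic equation. First I would rewrite \eqref{anl} in the normalized form $a_n - \frac{s+u}{u\nu_1}\,a_{n-1} + \frac{s}{u\nu_1}\,a_{n-2} = 0$, which is legitimate since $u,\nu_1 > 0$ by the standing assumption $s,u>0$ (and $\nu_1\in(0,1)$). The associated characteristic polynomial is $x^2 - \frac{s+u}{u\nu_1}\,x + \frac{s}{u\nu_1} = 0$, whose two roots are $\ell_\pm = \frac{s+u\pm\sqrt{(s+u)^2 - 4su\nu_1}}{2u\nu_1}$. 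The first genuine task is to verify the discriminant simplification: $(s+u)^2 - 4su\nu_1 = s^2 + 2su + u^2 - 4su\nu_1 = (s-u)^2 + 4su - 4su\nu_1 = (s-u)^2 + 4su(1-\nu_1) = (s-u)^2 + 4su\nu_0$, using $\nu_0+\nu_1 = 1$. This matches the quantity $\Delta$ from \eqref{xo+-} and yields exactly the claimed expression for $\ell_\pm$.

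Next I would record that $\ell_+ \neq \ell_-$, so that the two solutions $\ell_+^n$ and $\ell_-^n$ are linearly independent and span the solution space. Indeed $\ell_+ = \ell_-$ would force $(s-u)^2 + 4su\nu_0 = 0$; since $s,u>0$ and $\nu_0>0$, this sum is strictly positive, so the roots are distinct (in fact real and positive). Hence every solution of \eqref{anl} is of the form $a_n = A\,\ell_+^n + B\,\ell_-^n$ for constants $A,B$ determined by the initial data $a_0,a_1$. Solving the $2\times 2$ linear system
\begin{align*}
A + B &= a_0,\\
A\,\ell_+ + B\,\ell_- &= a_1,
\end{align*}
whose determinant is $\ell_- - \ell_+ \neq 0$, gives $A = \dfrac{a_0\ell_- - a_1}{\ell_- - \ell_+}$ and $B = \dfrac{a_1 - \ell_+ a_0}{\ell_- - \ell_+}$, which is precisely the asserted formula.

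There is no serious obstacle here; the only points requiring a small amount of care are the discriminant identity $(s+u)^2 - 4su\nu_1 = (s-u)^2 + 4su\nu_0$ and the observation that the roots are genuinely distinct (so the resolution of the recurrence via a basis $\{\ell_+^n,\ell_-^n\}$ is valid rather than needing the repeated-root form $\{\ell^n, n\ell^n\}$). Both follow immediately from $s,u>0$ and $\nu_0+\nu_1=1$, $\nu_0,\nu_1\in(0,1)$. I would close by noting, if useful for the sequel, that $0 < \ell_+ < \ell_-$ (since both roots are positive with product $s/(u\nu_1)>0$ and sum $(s+u)/(u\nu_1)>0$), which is the fact one needs to identify the decaying mode and pin down $a_n$ using the boundary behaviour of the Fearnhead-type coefficients.
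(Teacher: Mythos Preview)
Your approach is essentially identical to the paper's: both solve the constant-coefficient linear recurrence via its characteristic polynomial $u\nu_1 x^2-(s+u)x+s$, identify the roots $\ell_\pm$, and fix the constants from $a_0,a_1$. Your write-up is more detailed (you spell out the discriminant identity and the distinctness of the roots), but the argument is the same.

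One slip in your closing remark: you write $0<\ell_+<\ell_-$, but in fact $\ell_+=\frac{s+u+\sqrt{\Delta}}{2u\nu_1}>\ell_-=\frac{s+u-\sqrt{\Delta}}{2u\nu_1}$; the paper's Remark~\ref{ellx} records $\ell_+\in(1,\infty)$ and $\ell_-\in(0,1)$, so $\ell_-$ is the decaying mode. This does not affect the proof of the lemma itself.
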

\begin{proof}
Note that \eqref{anl} is a homogeneous linear recurrence relation of second order with constant coefficients. Thus, its solution has the form
$a_n=c_1 \,\ell_+^n+c_2\,\ell_-^n$, where $\ell_+$ and $\ell_-$ are the roots of the polynomial $p(x)=u\nu_1 x^2-(s+u)x+s$ and $c_1,\, c_2$ are constants. The previous equation for $n=0$ and $n=1$ permits to determine the values of the constants $c_1,\, c_2$ in terms of $a_0$ and $a_1$. The result follows.
\end{proof}
\begin{remark}\label{ellx}
Note that $\ell_+=1/(1-x_0^+)\in(1,\infty)$ and $\ell_-=s(1-x_0^+)/u\nu_1\in(0,1)$, where $x_0^+$ is defined in \eqref{xo+-}.
\end{remark}
\begin{proposition}[Convergence of the coefficients]\label{cc}
For all $k\in[N]_0$, we have
$$a_k^N\xrightarrow[N\rightarrow\infty]{} a_k=\ell_-^k.$$
\end{proposition}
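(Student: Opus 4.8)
The plan is a compactness-and-uniqueness argument. By Proposition~\ref{chan} we have $0\le a_n^N\le 1$ for every $n\in[N-1]_0$, so for each fixed $k$ the sequence $(a_k^N)_{N>k}$ stays in the compact interval $[0,1]$. A diagonal extraction then produces a single subsequence $(N_j)_j$ along which $a_k^{N_j}\to a_k$ for every $k\ge 0$ simultaneously; the limit satisfies $a_0=1$ by~\eqref{a0}, and $0\le a_k\le 1$ for all $k$.

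Next I would pass to the limit in the recursion. Fix $n\ge 2$. For all $j$ with $N_j>n$, equation~\eqref{an} holds, and its coefficients converge as $j\to\infty$: $\tfrac{n}{N_j}+u\nu_1\to u\nu_1$, $\tfrac{n}{N_j}+\tfrac{N_j-(n-1)}{N_j}s+u\to s+u$, and $\tfrac{N_j-(n-1)}{N_j}s\to s$. Hence $(a_k)_k$ solves~\eqref{anl}, i.e. $u\nu_1 a_n-(s+u)a_{n-1}+s\,a_{n-2}=0$ for every $n\ge 2$. By Lemma~\ref{ane}, combined with $a_0=1$, this gives $a_n=c\,\ell_+^n+(1-c)\,\ell_-^n$ for some constant $c$, where by Remark~\ref{ellx} one has $\ell_+>1>\ell_->0$. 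If $c\neq 0$, then $|a_n|\sim|c|\,\ell_+^n\to\infty$, contradicting $a_n\le 1$; therefore $c=0$ and $a_n=\ell_-^n$.

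Finally, the limit $\ell_-^n$ does not depend on the chosen subsequence. Since $(a_k^N)_N$ is bounded, if it failed to converge to $\ell_-^k$ there would be a further subsequence converging to some $b\neq\ell_-^k$; applying the two preceding steps to that subsequence would force $b=\ell_-^k$, a contradiction. Hence $a_k^N\to\ell_-^k$.

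The only genuine input beyond bookkeeping is the uniform bound $0\le a_k^N\le 1$, which is precisely Proposition~\ref{chan} (resting in turn on Lemma~\ref{lme} and Lemma~\ref{uap}); the rest is the standard observation that a bounded solution of a second-order linear recurrence with a characteristic root of modulus $>1$ must annihilate that mode, together with the subsequence argument. I do not anticipate any serious difficulty here; the only points requiring a little care are the diagonal extraction and the subsequence-independence step, both indicated above.
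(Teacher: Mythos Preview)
Your argument is correct, and it is genuinely different from the paper's proof. The paper proceeds by establishing the convergence of $a_1^N$ directly: using equation~\eqref{a1} one has $a_1^N=1-N(1-h_{N-1}^N)$, and an explicit representation of $N\psi_{N-1}^N$ (where $\psi_k^N=h_k^N-k/N$) from \cite{KHB13} as a ratio of expectations under the stationary distribution $\pi_{Z^N}$ is then combined with the weak convergence $\pi_{Z^N}\Rightarrow\delta_{x_0^+}$ from~\eqref{csd} to obtain $a_1^N\to s(1-x_0^+)/(u\nu_1)=\ell_-$. Once $a_0^N\to1$ and $a_1^N\to\ell_-$ are known, the remaining $a_k^N$ follow by passing to the limit in~\eqref{an}.

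Your route avoids both external inputs (\cite{KHB13} for the formula and \cite{Co15} for the convergence of the stationary law), replacing them by the bound $a_n^N\in[0,1]$ from Proposition~\ref{chan} and the observation that a bounded solution of~\eqref{anl} cannot carry any $\ell_+^n$ component since $\ell_+>1$. This is more self-contained and arguably more robust; the price is that you rely on Proposition~\ref{chan} (and hence on Lemmas~\ref{lme} and~\ref{uap}), whereas the paper's proof of Proposition~\ref{cc} is logically independent of those results. In either approach the identification $a_k=\ell_-^k$ ultimately rests on Lemma~\ref{ane} and Remark~\ref{ellx}.
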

\begin{proof}
Since $a_0^N=1$ for all $N\geq 1$, the result is true for $k=0$. If we prove the result for $k=1$, then using 
the recurrence relation \eqref{an}, we can deduce that, for each $k\geq 0$, $a_k^N\xrightarrow[N\rightarrow\infty]{}a_k$, where $(a_k)_{k\geq 0}$ is the solution of \eqref{anl} with $a_0:=1$ and $a_1:=\ell_-$. From Lemma \ref{ane} this solution is given by $a_k=\ell_-^k$ and the result follows.
Therefore, it remains only to prove that $a_1^N\xrightarrow[N\rightarrow\infty]{} \ell_-$. From \eqref{a1} and denoting $\psi_k^N:=h_k^N-\frac{k}{N}$, we have to show that $N\psi_{N-1}^N\xrightarrow[N\rightarrow\infty]{} \ell_-$. 

From \cite[Eq. 30]{KHB13}, \eqref{sd}, and using that $\pi_{Z^N}(k/N)=\pi_{X^N}(k)$, we obtain
\begin{align*}
 N\psi_{N-1}^N&=\frac{s}{\frac{N-1}{N}+(N-1)u\nu_1}\,\left(1-\frac{1}{N}\right)\,\frac{\sum\limits_{n=1}^N (N-n)n(\frac{N-n}{N}+u\nu_1)\pi_{Z^N}(\frac{n}{N})}{\sum\limits_{n=1}^N n(\frac{N-n}{N}+u\nu_1)\pi_{Z^N}(\frac{n}{N})}\\
 &=\frac{s}{\frac{N-1}{N^2}+\frac{N-1}{N}u\nu_1}\,\left(1-\frac{1}{N}\right)\,\frac{\sum\limits_{n=1}^N (1-\frac{n}{N})\frac{n}{N}(1-\frac{n}{N}+u\nu_1)\pi_{Z^N}(\frac{n}{N})}{\sum\limits_{n=1}^N \frac{n}{N}(1-\frac{n}{N}+u\nu_1)\pi_{Z^N}(\frac{n}{N})}\\
 &=\frac{s}{\frac{N-1}{N^2}+\frac{N-1}{N}u\nu_1}\,\left(1-\frac{1}{N}\right)\, \frac{E_{\pi_{Z^N}}\left[Z_1^N(1-Z_1^N)(1-Z_1^N+u\nu_1)\right]}{E_{\pi_{Z^N}}\left[Z_1^N(1-Z_1^N+u\nu_1)\right]}.
 \end{align*}
 Thus, \ref{csd} yields $N\psi_{N-1}^N\xrightarrow[N\rightarrow\infty]{}s(1-x_0^+)/u\nu_1=\ell_-$, ending the proof.
\end{proof}
\begin{remark}
The previous result and Remark \ref{ellx} yield $a_1=s(1-x_0^+)/u\nu_1.$
This expression is similar to its diffusion limit analogue, where $\alpha_1=\sigma(1-\tilde{x})/(1+\theta\nu_1),$
with $\theta$ and $\sigma$ as in Eq. \eqref{dlas}, $\tilde{x}=E_{\pi_Y}[Y^2(1-Y)]/E_{\pi_Y}[Y^2(1-Y)]$, and $\pi_Y$ is the stationary distribution of the Wright-Fisher diffusion (see \cite{Ta07}).
\end{remark}
\begin{corollary}\label{convln}
  We have $L_N\xrightarrow[N\rightarrow\infty]{(d)}L,$
  where $L$ is a geometric random variable with parameter $1-\ell_-$.
 \end{corollary}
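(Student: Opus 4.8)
The plan is to read off the result directly from the tail‑probability representation of $L_N$ together with the convergence of the coefficients. By Proposition \ref{chan} we know that for each $N$ and every $n\in[N-1]_0$ we have $P(L_N>n)=a_n^N$. Fixing $n\in\Nb_0$ and letting $N\to\infty$ (so that eventually $n\le N-1$), Proposition \ref{cc} gives $a_n^N\to a_n=\ell_-^{\,n}$. Hence $P(L_N>n)\xrightarrow[N\to\infty]{}\ell_-^{\,n}$ for every $n\in\Nb_0$.

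Next I would identify the limit function $n\mapsto\ell_-^{\,n}$ as the tail of a genuine probability law. By Remark \ref{ellx} we have $\ell_-\in(0,1)$, so $\ell_-^{\,n}\to0$ as $n\to\infty$; since also $\ell_-^{\,0}=1$, the sequence $(\ell_-^{\,n})_{n\ge0}$ is nonincreasing from $1$ to $0$, and therefore it is the survival function of a random variable $L$ on $[N]_0\cap[1,\infty)=\{1,2,\dots\}$ with $P(L>n)=\ell_-^{\,n}$, i.e. $P(L=k)=\ell_-^{\,k-1}-\ell_-^{\,k}=(1-\ell_-)\ell_-^{\,k-1}$ for $k\ge1$. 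This is precisely a geometric random variable with parameter $1-\ell_-$.

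Finally I would conclude the convergence in distribution. Since all the $L_N$ and $L$ are $\Nb$‑valued, it suffices to observe that $P(L_N=k)=a_{k-1}^N-a_k^N\to\ell_-^{\,k-1}-\ell_-^{\,k}=P(L=k)$ for every fixed $k\ge1$ (using Proposition \ref{cc}), together with $P(L_N=N)=a_{N-1}^N\to0=\lim_{k\to\infty}P(L=k)$; equivalently, the pointwise convergence $P(L_N>n)\to P(L>n)$ toward a proper probability distribution already yields $L_N\xrightarrow{(d)}L$. There is essentially no analytic obstacle here: the only point that needs to be checked is that no mass escapes to infinity in the limit, and this is exactly what $\ell_-<1$ guarantees. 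Hence all the substance of the statement is contained in Propositions \ref{chan} and \ref{cc} and Remark \ref{ellx}, and the corollary follows.
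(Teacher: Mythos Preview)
Your proof is correct and follows the same approach as the paper, which simply states ``Direct from Proposition \ref{cc}.'' You have merely spelled out the details that the paper leaves implicit: the tail representation from Proposition \ref{chan}, the pointwise convergence of tails from Proposition \ref{cc}, and the identification of the limit as a geometric law via $\ell_-\in(0,1)$ from Remark \ref{ellx}.
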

 \begin{proof}
  Direct from Proposition \ref{cc}.
 \end{proof}
We have all the ingredients to prove the convergence of the probabilities $h_k^N$.
\begin{theorem}[Convergence of the common ancestor type distribution]\label{chk}
Consider a sequence of integers $(k_N)_{N\geq 1}$ satisfying, for all $N\geq 1$, $k_N\in [N]_0$. Then
$$\lim\limits_{N\rightarrow\infty}\frac{k_N}{N}=x\in(0,1)\Rightarrow \lim\limits_{N\rightarrow\infty} h_{k_N}^N=h(x):=x+x\sum\limits_{n=1}^\infty \ell_-^n (1-x)^n=\frac{x}{1-\ell_-(1-x)}.$$
\end{theorem}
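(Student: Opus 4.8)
The plan is to pass to the limit directly in the representation \eqref{hkak}. Set $p_n^N:=\prod_{j=0}^{n-1}\frac{N-k_N-j}{N-1-j}$, with the convention $p_0^N:=1$, so that
$$h_{k_N}^N=\frac{k_N}{N}\sum_{n=0}^{N-k_N}a_n^N\,p_n^N.$$
Since $x>0$, we have $k_N\to\infty$, so $k_N>1$ and $N-k_N\to\infty$; we work with such large $N$ throughout. Fix $n\geq 0$. As $k_N/N\to x$, each factor $\frac{N-k_N-j}{N-1-j}$ tends to $1-x$, hence $p_n^N\to(1-x)^n$; together with $a_n^N\to\ell_-^n$ from Proposition \ref{cc}, the $n$-th summand converges to $\ell_-^n(1-x)^n$. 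Because $\ell_-\in(0,1)$ by Remark \ref{ellx} and $1-x\in(0,1)$, the geometric series $\sum_{n\geq0}\ell_-^n(1-x)^n$ converges to $1/(1-\ell_-(1-x))$. Thus, once the interchange of limit and summation is justified, we obtain $h_{k_N}^N\to x\sum_{n\geq0}\ell_-^n(1-x)^n=x+x\sum_{n\geq1}\ell_-^n(1-x)^n=h(x)$, which is the claim.

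To justify the interchange I would produce a summable dominating sequence valid for all large $N$. First, $a_n^N\leq a_0^N=1$, since $n\mapsto a_n^N$ is non-increasing (shown in the proof of Proposition \ref{chan}). Second, for $k_N>1$ the map $j\mapsto\frac{N-k_N-j}{N-1-j}$ is decreasing on the relevant range, as its derivative in $j$ equals $(1-k_N)/(N-1-j)^2<0$; hence every factor is at most $\frac{N-k_N}{N-1}$ and $p_n^N\leq\bigl(\tfrac{N-k_N}{N-1}\bigr)^n$. Now fix $q$ with $1-x<q<1$. Since $\frac{N-k_N}{N-1}\to 1-x$, there is $N_0$ such that $a_n^N\,p_n^N\leq q^n$ for all $N\geq N_0$ and all $0\leq n\leq N-k_N$. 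As $\sum_{n\geq0}q^n<\infty$, dominated convergence for series gives $\sum_{n=0}^{N-k_N}a_n^N\,p_n^N\to\sum_{n\geq0}\ell_-^n(1-x)^n$, and multiplying by $k_N/N\to x$ completes the argument. The only genuinely delicate point is this uniform domination; everything else is bookkeeping.

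An alternative route, avoiding \eqref{hkak}, uses the identity $h_{k_N}^N=P(\Gs^N\leq k_N)$ established above together with Corollary \ref{convln}, which gives $L_N\xrightarrow{(d)}L\sim\textrm{Geom}(1-\ell_-)$. Conditioning on $L_N=\ell$, the first-zero position $\Gs^N$ satisfies $P(\Gs^N>k_N\mid L_N=\ell)=\binom{N-k_N}{\ell}/\binom{N}{\ell}=\prod_{j=0}^{\ell-1}\frac{N-k_N-j}{N-j}\to(1-x)^\ell$, and the same geometric-type bound as above (each factor $\leq\frac{N-k_N}{N}$) lets one pass to the limit inside the expectation, yielding $P(\Gs^N>k_N)\to E[(1-x)^L]=\frac{(1-x)(1-\ell_-)}{1-\ell_-(1-x)}$; hence $h_{k_N}^N\to 1-\frac{(1-x)(1-\ell_-)}{1-\ell_-(1-x)}=\frac{x}{1-\ell_-(1-x)}=h(x)$.
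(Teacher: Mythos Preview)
Your proof is correct and follows essentially the same approach as the paper: pointwise convergence of each summand in \eqref{hkak} combined with dominated convergence, using $a_n^N\leq 1$ (from Proposition \ref{chan}) and a geometric bound on the product of factors; the paper simply takes the concrete choice $q=1-\tfrac{x}{2}$ where you pick an arbitrary $q\in(1-x,1)$. Your alternative argument via $\Gs^N$ and Corollary \ref{convln} is a pleasant extra that the paper does not include.
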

\begin{proof}
Fix $n\in\Nb$. Thanks to Proposition \ref{cc}, if $k_N/N$ converges to $x\in(0,1)$, then
$$a_n^N\,\frac{k_N}{N}\prod\limits_{j=0}^{n-1}\frac{N-k_N-j}{N-1-j}\xrightarrow[N\rightarrow\infty]{}\ell_-^n\,x(1-x)^n.$$
Now, let $N_0$ be such that, for all $N\geq N_0$, $\frac{k_N-1}{N}\geq \frac{x}{2}$. Then, using Proposition \ref{chan}, we get, for all $N\geq N_0$,
$$0\leq a_n^N\,\frac{k_N}{N}\prod\limits_{j=0}^{n-1}\frac{N-k_N-j}{N-1-j}\leq \left(1-\frac{x}{2}\right)^n.$$
The result follows as an application of the dominated convergence theorem.
\end{proof}
\begin{corollary}
We have $\frac{\mathcal{G}_N}{N}\xrightarrow[N\rightarrow\infty]{(d)}\mathcal{G}$, where $\mathcal{G}$ is the random variable with values in $[0,1]$ and density function given by
 $$f_\mathcal{G}(x):=\frac{1-\ell_-}{(1-\ell_-(1-x))^2}.$$
\end{corollary}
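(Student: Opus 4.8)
The plan is to identify the limiting law of $\mathcal{G}_N/N$ by computing, for $x\in[0,1]$, the limit of $P(\mathcal{G}_N/N\le x)=P(\mathcal{G}_N\le \lfloor Nx\rfloor)$ and recognising it as the distribution function of $\mathcal{G}$. First I would observe that by the Corollary immediately preceding Theorem~\ref{chk}, $h_k^N=P(\mathcal{G}^N\le k)$, so with $k_N:=\lfloor Nx\rfloor$ we have $P(\mathcal{G}_N/N\le x)=h_{k_N}^N$ (modulo the usual care that $\mathcal{G}_N$ takes values in $[N]$, so $\{\mathcal{G}_N/N\le x\}=\{\mathcal{G}_N\le k_N\}$ up to sets of probability governed by whether $Nx$ is an integer). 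Since $k_N/N\to x$, Theorem~\ref{chk} gives, for $x\in(0,1)$,
\begin{equation*}
P\!\left(\frac{\mathcal{G}_N}{N}\le x\right)=h_{k_N}^N\xrightarrow[N\rightarrow\infty]{} h(x)=\frac{x}{1-\ell_-(1-x)}.
\end{equation*}

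Next I would check that $h$ is indeed the distribution function of a random variable on $[0,1]$: it is continuous and strictly increasing on $[0,1]$ with $h(0)=0$ and $h(1)=1$, so the convergence of distribution functions at every continuity point (here, all of $(0,1)$, and the endpoints follow by monotonicity and the sandwich $0\le h_{k_N}^N\le 1$) yields $\mathcal{G}_N/N\xrightarrow{(d)}\mathcal{G}$ where $\mathcal{G}$ has distribution function $h$. Finally I would differentiate: on $(0,1)$,
\begin{equation*}
f_\mathcal{G}(x)=h'(x)=\frac{(1-\ell_-(1-x))-x\ell_-}{(1-\ell_-(1-x))^2}=\frac{1-\ell_-}{(1-\ell_-(1-x))^2},
\end{equation*}
which is exactly the claimed density (and integrates to $h(1)-h(0)=1$, confirming there is no atom at the endpoints).

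The only genuine subtlety — hardly an obstacle — is the bookkeeping at the boundary: one must make sure the convergence $h_{k_N}^N\to h(x)$ from Theorem~\ref{chk}, which is stated for sequences with $k_N/N\to x\in(0,1)$, suffices to pin down the whole limiting distribution function, including continuity of the limit at $x=0$ and $x=1$. This is handled by the fact that $h$ extends continuously to $[0,1]$ together with the trivial bounds $0\le h_k^N\le 1$, so the limit law has no mass escaping to $\{0\}$ or $\{1\}$; equivalently, $P(\mathcal{G}_N/N\le x)\to h(x)$ at every continuity point of $h$, which is all that is needed for convergence in distribution. Everything else is the elementary differentiation above.
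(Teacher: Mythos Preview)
Your proposal is correct and follows exactly the route the paper takes: identify $P(\mathcal{G}_N/N\le x)=h_{\lfloor Nx\rfloor}^N$ via the preceding corollary, apply Theorem~\ref{chk} to obtain the limit $h(x)$, and recognise $h$ as the distribution function of $\mathcal{G}$ by differentiating to recover the stated density. The paper's own proof is a two-line sketch (``$h(x)=P(\mathcal{G}\le x)$; now apply Theorem~\ref{chk}''), and you have simply filled in the details it leaves implicit, including the boundary bookkeeping.
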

\begin{proof}
It is straightforward to show that $h(x)=P(\mathcal{G}\leq x)$. The result follows from Theorem \ref{chk}.
\end{proof}
\section{The common ancestor type distribution: the lookdown ASG approach in the finite case}\label{s5}
In this section, we extend to the finite population framework the construction of the (pruned) lookdown ancestral selection graph (LD-ASG) given in \cite{LKBW15}. Based on this construction, we provide a graphical interpretation to the equation \eqref{hkak} and the recurrence relation \eqref{an}. We also give a graphical meaning to the random variable $L_N$ appearing in Proposition \ref{chan}. We assume in the sequel that the parameter of selection  is strictly positive, i.e. $s>0$.
\subsection{The ancestral selection graph}\label{ASG}
The concept of \textit{ancestral selection graph} (ASG) was introduced in \cite{KroNe97} and \cite{NeKro97}  with the purpose of constructing samples from a present population, together with their ancestries, in the diffusion limit of the Moran model with selection and mutation. We recall here this notion in the finite case and we discuss its relation to the common ancestor type distribution. 

Let us start with a given realisation of the untyped $2$-type Moran model of size $N$ in $[0,\tau]$, i.e with a realisation of the reproduction-mutation process $(\Lambda^N_t)_{t\in[0,\tau]}$. In what follows, we use the letter $t$ for the forward time and $\beta:=\tau-t$ for the backward time. The ASG can be read off as follows (see Fig. \ref{fig3}). We start with a sample $M\subset[N]$ of the population at time $\beta=0$ and we trace back the lines of the potential ancestors. When a neutral arrow joins two individuals in the current set of potential ancestors, a \textit{coalescence event} take place, i.e. the two lines merge into a single one, the one at the tail of the arrow. When a neutral arrow hits from outside a potential ancestor, a \textit{relocation event} occurs, i.e. the hit individual moves to the level at the tail of the arrow. When a selective arrow hits the current set of potential ancestors, the individual that is hit has two possible parents, the \textit{incoming branch} at the tail and the \textit{continuing branch} at the tip. The true parent depends on the type of the incoming branch (see Fig. \ref{fig4}), but for the moment we work without types. These unresolved reproduction events can be of two types: a \textit{branching event} if the selective arrow emanates from an individual outside the current set of potential ancestors, and a \textit{collision event} if the selective arrow links two current potential ancestors. The number of potential ancestors decreases by one in a coalescence event, increases by one in a branching event, and remains unchanged in collision and relocation events. The previous procedure provides, at any time $\beta\in[0,\tau]$, the corresponding set of potential ancestors of the initial sample $M$, which we denote by $\As_{[0,\tau]}^{M,N}(*,\beta)$, where $*$ stands for untyped and will be replaced later by the initial (at $t=0$) configuration of types ($\As_{[0,\tau]}^{M,N}(*,0)=M$). 
\ifpdf
\begin{figure}[!ht]
\begin{picture}(0,0)%
\centerline{\resizebox*{10cm}{3cm}{\includegraphics{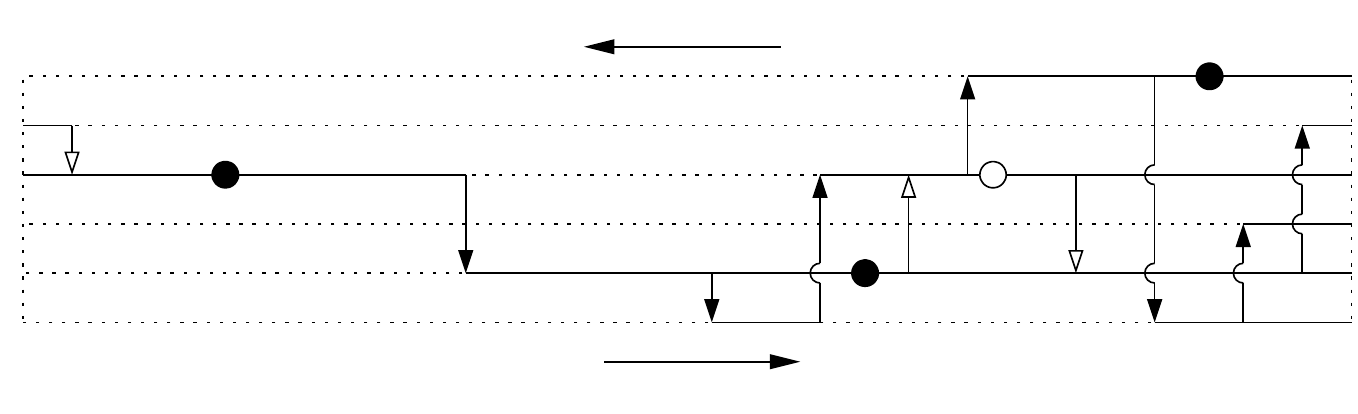}}}
\end{picture}%
\setlength{\unitlength}{4144sp}%
\begingroup\makeatletter\ifx\SetFigFont\undefined%
\gdef\SetFigFont#1#2#3#4#5{%
  \reset@font\fontsize{#1}{#2pt}%
  \fontfamily{#3}\fontseries{#4}\fontshape{#5}%
  \selectfont}%
\fi\endgroup%
\centerline{\resizebox*{10cm}{3cm}{
\begin{picture}(6192,1888)(121,-926)
\put(6256,-691){\makebox(0,0)[lb]{\smash{{\SetFigFont{12}{14.4}{\rmdefault}{\mddefault}{\updefault}{\color[rgb]{0,0,0}$t=\tau$}%
}}}}
\put(136,749){\makebox(0,0)[lb]{\smash{{\SetFigFont{12}{14.4}{\rmdefault}{\mddefault}{\updefault}{\color[rgb]{0,0,0}$\beta=\tau$}%
}}}}
\put(6256,749){\makebox(0,0)[lb]{\smash{{\SetFigFont{12}{14.4}{\rmdefault}{\mddefault}{\updefault}{\color[rgb]{0,0,0}$\beta=0$}%
}}}}
\put(3196,-871){\makebox(0,0)[lb]{\smash{{\SetFigFont{12}{14.4}{\rmdefault}{\mddefault}{\updefault}{\color[rgb]{0,0,0}$t$}%
}}}}
\put(3061,839){\makebox(0,0)[lb]{\smash{{\SetFigFont{12}{14.4}{\rmdefault}{\mddefault}{\updefault}{\color[rgb]{0,0,0}$\beta$}%
}}}}
\put(136,-691){\makebox(0,0)[lb]{\smash{{\SetFigFont{12}{14.4}{\rmdefault}{\mddefault}{\updefault}{\color[rgb]{0,0,0}$t=0$}%
}}}}
\end{picture}
}}
\caption{\label{fig3} Untyped ASG corresponding to Fig. \ref{fig1so} starting with the total population at backward time $\beta=0$.}
\end{figure}
\fi
The untyped ASG in $[0,\tau]$ of the sample $M$, $\As_{[0,\tau]}^{M,N}(*)$, consists of
\begin{enumerate}
 \item the set $V_\tau^{M,N}:=\bigcup_{\beta\in[0,\tau]}\{\beta\}\times\As_{[0,\tau]}^{M,N}(*,\beta)\subset[0,\tau]\times[N]$.
 \item the configuration of arrows  and circles involving the lines in $V_\tau^{M,N}$.
\end{enumerate}
When $M=[N]$, i.e. when we sample the ancestry of the whole population, we simply write  $\As_{[0,\tau]}^{N}(*)$  instead of $\As_{[0,\tau]}^{[N],N}(*)$, and $\As_{[0,\tau]}^{N}(*,v)$ instead of $\As_{[0,\tau]}^{[N],N}(*,v)$.

The true ancestry of the initial sample can be derived after assigning types, $J\in\{0,1\}^N$, to the individuals at time $\beta=\tau$ using the following rule: propagate types forward in time in the ASG and keep track of the changes by respecting the mutation events. At every selective arrow, the incoming branch is the ancestor if it is of type $0$, otherwise the ancestor is the continuing branch (see Fig. \ref{fig4}).
\ifpdf
\begin{figure}[!ht]
\begin{picture}(0,0)%
\centerline{\resizebox*{10cm}{1.2cm}{\includegraphics{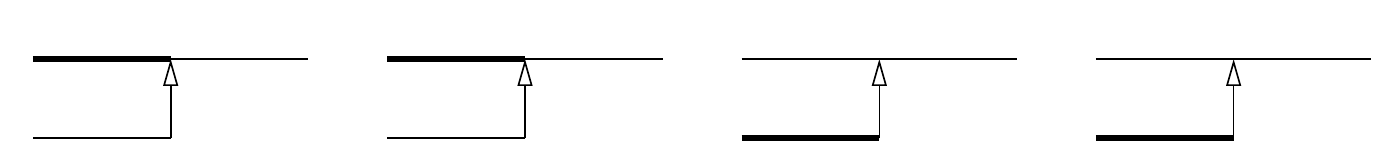}}}
\end{picture}%
\setlength{\unitlength}{4144sp}%
\begingroup\makeatletter\ifx\SetFigFont\undefined%
\gdef\SetFigFont#1#2#3#4#5{%
  \reset@font\fontsize{#1}{#2pt}%
  \fontfamily{#3}\fontseries{#4}\fontshape{#5}%
  \selectfont}%
\fi\endgroup%
\centerline{\resizebox*{10cm}{1.2cm}{
\begin{picture}(6330,718)(211,-161)
\put(2161,434){\makebox(0,0)[lb]{\smash{{\SetFigFont{10}{12.0}{\rmdefault}{\mddefault}{\updefault}{\color[rgb]{0,0,0}$C$}%
}}}}
\put(2161, 29){\makebox(0,0)[lb]{\smash{{\SetFigFont{10}{12.0}{\rmdefault}{\mddefault}{\updefault}{\color[rgb]{0,0,0}$I$}%
}}}}
\put(2926,434){\makebox(0,0)[lb]{\smash{{\SetFigFont{10}{12.0}{\rmdefault}{\mddefault}{\updefault}{\color[rgb]{0,0,0}$D$}%
}}}}
\put(1846,209){\makebox(0,0)[lb]{\smash{{\SetFigFont{12}{14.4}{\rmdefault}{\mddefault}{\updefault}{\color[rgb]{0,0,0}$0$}%
}}}}
\put(3781,434){\makebox(0,0)[lb]{\smash{{\SetFigFont{10}{12.0}{\rmdefault}{\mddefault}{\updefault}{\color[rgb]{0,0,0}$C$}%
}}}}
\put(3781, 29){\makebox(0,0)[lb]{\smash{{\SetFigFont{10}{12.0}{\rmdefault}{\mddefault}{\updefault}{\color[rgb]{0,0,0}$I$}%
}}}}
\put(4546,434){\makebox(0,0)[lb]{\smash{{\SetFigFont{10}{12.0}{\rmdefault}{\mddefault}{\updefault}{\color[rgb]{0,0,0}$D$}%
}}}}
\put(3466,209){\makebox(0,0)[lb]{\smash{{\SetFigFont{12}{14.4}{\rmdefault}{\mddefault}{\updefault}{\color[rgb]{0,0,0}$1$}%
}}}}
\put(3466,-106){\makebox(0,0)[lb]{\smash{{\SetFigFont{12}{14.4}{\rmdefault}{\mddefault}{\updefault}{\color[rgb]{0,0,0}$0$}%
}}}}
\put(5401,434){\makebox(0,0)[lb]{\smash{{\SetFigFont{10}{12.0}{\rmdefault}{\mddefault}{\updefault}{\color[rgb]{0,0,0}$C$}%
}}}}
\put(5401, 29){\makebox(0,0)[lb]{\smash{{\SetFigFont{10}{12.0}{\rmdefault}{\mddefault}{\updefault}{\color[rgb]{0,0,0}$I$}%
}}}}
\put(6166,434){\makebox(0,0)[lb]{\smash{{\SetFigFont{10}{12.0}{\rmdefault}{\mddefault}{\updefault}{\color[rgb]{0,0,0}$D$}%
}}}}
\put(5086,209){\makebox(0,0)[lb]{\smash{{\SetFigFont{12}{14.4}{\rmdefault}{\mddefault}{\updefault}{\color[rgb]{0,0,0}$1$}%
}}}}
\put(5086,-106){\makebox(0,0)[lb]{\smash{{\SetFigFont{12}{14.4}{\rmdefault}{\mddefault}{\updefault}{\color[rgb]{0,0,0}$0$}%
}}}}
\put(541,434){\makebox(0,0)[lb]{\smash{{\SetFigFont{10}{12.0}{\rmdefault}{\mddefault}{\updefault}{\color[rgb]{0,0,0}$C$}%
}}}}
\put(541, 29){\makebox(0,0)[lb]{\smash{{\SetFigFont{10}{12.0}{\rmdefault}{\mddefault}{\updefault}{\color[rgb]{0,0,0}$I$}%
}}}}
\put(1306,434){\makebox(0,0)[lb]{\smash{{\SetFigFont{10}{12.0}{\rmdefault}{\mddefault}{\updefault}{\color[rgb]{0,0,0}$D$}%
}}}}
\put(226,209){\makebox(0,0)[lb]{\smash{{\SetFigFont{12}{14.4}{\rmdefault}{\mddefault}{\updefault}{\color[rgb]{0,0,0}$1$}%
}}}}
\put(226,-106){\makebox(0,0)[lb]{\smash{{\SetFigFont{12}{14.4}{\rmdefault}{\mddefault}{\updefault}{\color[rgb]{0,0,0}$1$}%
}}}}
\put(1846,-106){\makebox(0,0)[lb]{\smash{{\SetFigFont{12}{14.4}{\rmdefault}{\mddefault}{\updefault}{\color[rgb]{0,0,0}$1$}%
}}}}
\put(3286,209){\makebox(0,0)[lb]{\smash{{\SetFigFont{12}{14.4}{\rmdefault}{\mddefault}{\updefault}{\color[rgb]{0,0,0}$0$}%
}}}}
\put(4906,209){\makebox(0,0)[lb]{\smash{{\SetFigFont{12}{14.4}{\rmdefault}{\mddefault}{\updefault}{\color[rgb]{0,0,0}$0$}%
}}}}
\put(6526,209){\makebox(0,0)[lb]{\smash{{\SetFigFont{12}{14.4}{\rmdefault}{\mddefault}{\updefault}{\color[rgb]{0,0,0}$0$}%
}}}}
\put(1666,209){\makebox(0,0)[lb]{\smash{{\SetFigFont{12}{14.4}{\rmdefault}{\mddefault}{\updefault}{\color[rgb]{0,0,0}$1$}%
}}}}
\end{picture}
}}
\caption{\label{fig4} Incoming branch (I), continuing branch (C), and descendant (D). The bold line represents the ancestor.}
\end{figure}
\fi
The resulting object is the ASG with types and is denoted by  $\As_{[0,\tau]}^{M,N}(J)$ (see Fig. \ref{fig5}). For $v\in[0,\tau]$, $\As_{[0,\tau]}^{M,N}(J,v)$ represents the set of ancestors at time $\beta=v$ of the sample $M$, given the configuration of types $J$ at time $\beta=\tau$. As before, and in what follows, the upper index $(M,N)$ will be replaced by $N$ when $M=[N]$. 
\ifpdf
\begin{figure}[!ht]
\begin{picture}(0,0)%
\centerline{\resizebox*{10cm}{3cm}{\includegraphics{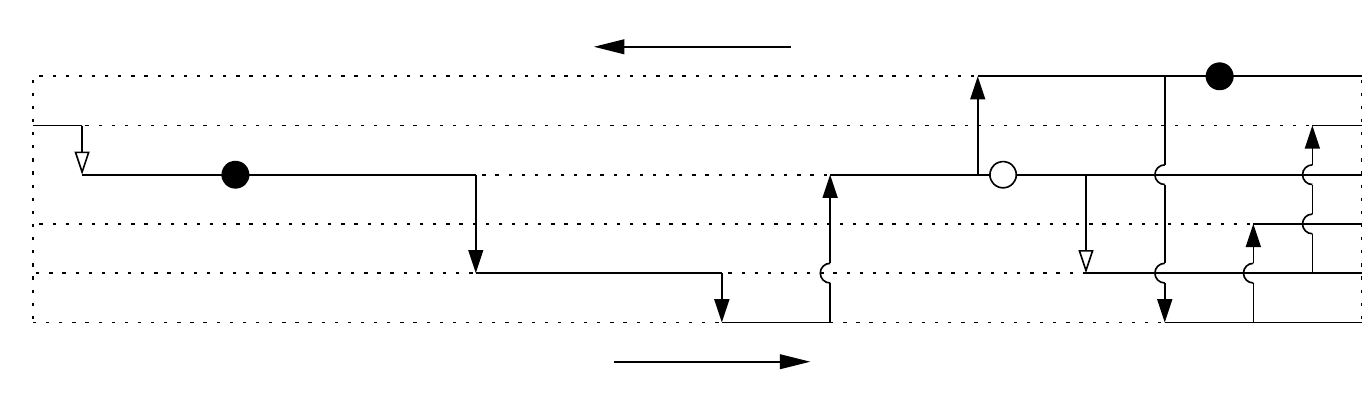}}}
\end{picture}%
\setlength{\unitlength}{4144sp}%
\begingroup\makeatletter\ifx\SetFigFont\undefined%
\gdef\SetFigFont#1#2#3#4#5{%
  \reset@font\fontsize{#1}{#2pt}%
  \fontfamily{#3}\fontseries{#4}\fontshape{#5}%
  \selectfont}%
\fi\endgroup%
\centerline{\resizebox*{10.1cm}{3cm}{
\begin{picture}(6237,1888)(76,-926)
\put(6256,-691){\makebox(0,0)[lb]{\smash{{\SetFigFont{12}{14.4}{\rmdefault}{\mddefault}{\updefault}{\color[rgb]{0,0,0}$t=\tau$}%
}}}}
\put(136,794){\makebox(0,0)[lb]{\smash{{\SetFigFont{12}{14.4}{\rmdefault}{\mddefault}{\updefault}{\color[rgb]{0,0,0}$\beta=\tau$}%
}}}}
\put(6256,749){\makebox(0,0)[lb]{\smash{{\SetFigFont{12}{14.4}{\rmdefault}{\mddefault}{\updefault}{\color[rgb]{0,0,0}$\beta=0$}%
}}}}
\put(3196,-871){\makebox(0,0)[lb]{\smash{{\SetFigFont{12}{14.4}{\rmdefault}{\mddefault}{\updefault}{\color[rgb]{0,0,0}$t$}%
}}}}
\put(3061,839){\makebox(0,0)[lb]{\smash{{\SetFigFont{12}{14.4}{\rmdefault}{\mddefault}{\updefault}{\color[rgb]{0,0,0}$\beta$}%
}}}}
\put(136,-781){\makebox(0,0)[lb]{\smash{{\SetFigFont{12}{14.4}{\rmdefault}{\mddefault}{\updefault}{\color[rgb]{0,0,0}$t=0$}%
}}}}
\put( 91,569){\makebox(0,0)[lb]{\smash{{\SetFigFont{12}{14.4}{\rmdefault}{\mddefault}{\updefault}{\color[rgb]{0,0,0}$1$}%
}}}}
\put( 91,344){\makebox(0,0)[lb]{\smash{{\SetFigFont{12}{14.4}{\rmdefault}{\mddefault}{\updefault}{\color[rgb]{0,0,0}$0$}%
}}}}
\put( 91,119){\makebox(0,0)[lb]{\smash{{\SetFigFont{12}{14.4}{\rmdefault}{\mddefault}{\updefault}{\color[rgb]{0,0,0}$1$}%
}}}}
\put( 91,-106){\makebox(0,0)[lb]{\smash{{\SetFigFont{12}{14.4}{\rmdefault}{\mddefault}{\updefault}{\color[rgb]{0,0,0}$1$}%
}}}}
\put( 91,-331){\makebox(0,0)[lb]{\smash{{\SetFigFont{12}{14.4}{\rmdefault}{\mddefault}{\updefault}{\color[rgb]{0,0,0}$0$}%
}}}}
\put( 91,-556){\makebox(0,0)[lb]{\smash{{\SetFigFont{12}{14.4}{\rmdefault}{\mddefault}{\updefault}{\color[rgb]{0,0,0}$0$}%
}}}}
\end{picture}%
}}
\caption{\label{fig5} ASG with types obtained from to Fig. \ref{fig3} after assignment of types at time $\beta=\tau$.}
\end{figure}
\fi
\begin{remark}
In the finite Moran model, collisions occur with strictly positive rate. By contrast, in the diffusion limit setting there are no collision events (see \cite{KroNe97}).
\end{remark}
We denote by $J^N$ the random variable with values on $\{0,1\}^N$ providing the initial configuration of types. Let $P_k$ be the joint law of $\Lambda^N$ and $J^N$, such that under $P_k$, $J^N$ is independent of $\Lambda^N$ and uniform on all possible configurations with exactly $k$ zeros. The relation between $h_k^N$ and the ASG with types is given in the following lemma. 
\begin{lemma}\label{lasg1}
 For all $k\in[N]_0$, we have
 $$h_k^N=\lim\limits_{\tau\rightarrow\infty} P_k\left(\As_{[0,\tau]}^{N}(J^N,\tau)=\{i\},\textrm{ for some $i\in[N]$},\, J_i^N=0\right).$$
\end{lemma}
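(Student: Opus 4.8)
The plan is to identify the event in the statement with the event that the sample consisting of \emph{all} $k$ fit individuals at forward time $0$ is ancestral to the whole population at some later time, which is exactly the characterization of $h_k^N$ recalled just before the lemma. Recall that $\As_{[0,\tau]}^{N}(J^N,\beta)$ is built by taking the untyped ASG of the whole population sampled at backward time $\beta=0$ (i.e.\ at forward time $\tau$), and then resolving the selective arrows according to the type configuration $J^N$ placed at backward time $\beta=\tau$ (forward time $0$). For fixed $\tau$, the untyped ASG of the whole population always contains, at backward time $\beta=\tau$, the ancestral line of the population at forward time $\tau$; after type assignment, $\As_{[0,\tau]}^{N}(J^N,\tau)$ is a single individual $\{i\}$ precisely when the ASG has resolved to one genuine ancestor, and $i$ is that common ancestor at forward time $0$. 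Hence, conditionally on $X_0^N=k$ realized through a uniform $J^N$ with $k$ zeros, the probability in the display is the probability that (a) the ASG resolves to a single line by backward time $\tau$ \emph{and} (b) that line carries type $0$ at forward time $0$.

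First I would make the forward/backward dictionary precise: fix a realization of $\Lambda^N$ on $[0,\tau]$; by stationarity and reversibility of the Poisson inputs in law, the pair $(\Lambda^N|_{[0,\tau]}, J^N)$ under $P_k$ has the same law as the Moran graphical system on $[0,\tau]$ with an initial (forward-time-$0$) type configuration uniform among those with $k$ zeros, read in the backward direction. Under this identification the typed ASG of the whole population computes, for each line at forward time $0$, whether its descendants occupy all of $[N]$ at forward time $\tau$: a line is an ASG ancestor of the full sample $[N]$ at backward time $\tau$ iff it is forward-ancestral to every individual at time $\tau$. Therefore the event $\{\As_{[0,\tau]}^{N}(J^N,\tau)=\{i\}\}$ is exactly the event that the individual at line $i$ at forward time $0$ is the unique forward-ancestor of the whole population at time $\tau$, i.e.\ that at time $\tau$ descendants of a single time-$0$ individual have fixed; and adding $J_i^N=0$ restricts to the case where that individual is of type $0$.

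Next I would take $\tau\to\infty$. The offspring-type process $\Theta^{0,N}$ was shown (in the text preceding the lemma) to be absorbed in $\Xi_N$ with probability one, i.e.\ almost surely there is a finite (random) time $T$ at which descendants of one time-$0$ individual have fixed, and that individual is the common ancestor $I_0^N$ with type distribution $h_k^N=P(I_0^N=0\mid X_0^N=k)$ under $P_k$. For $\tau\ge T$, the ASG of the whole population on $[0,\tau]$ has resolved to the single line carrying $I_0^N$, so $\As_{[0,\tau]}^{N}(J^N,\tau)=\{i\}$ with $i$ that line and $J_i^N$ its type; in particular for all $\tau\ge T$ the indicator of the event in the display equals $\mathbf{1}_{\{I_0^N=0\}}$. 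Passing to the limit (the events are eventually constant in $\tau$, or equivalently by bounded convergence applied to these indicators) gives $\lim_{\tau\to\infty}P_k(\cdot)=P_k(I_0^N=0)=h_k^N$.

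The main obstacle is the bookkeeping in the first paragraph: carefully justifying that an ASG ancestor of the \emph{full} sample is the same thing as a forward-common-ancestor, and that $\As^N_{[0,\tau]}(J^N,\tau)$ being a singleton is not a coincidence of the pruning but really signals fixation. One must check that coalescence, relocation, branching and collision events in the ASG are exactly the time-reversed reproduction events, so that "line $i$ at backward time $\tau$ lies in the typed ASG of $[N]$" is equivalent to "every line at forward time $\tau$ descends from line $i$ at forward time $0$"; the resolution rule at selective arrows (incoming branch wins iff it is type $0$) is precisely what makes the ASG ancestor set track true forward ancestry. Once this equivalence is in hand, the $\tau\to\infty$ step is immediate from the almost-sure absorption already established. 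I would also remark that monotonicity of the ASG ancestor set in $\tau$ (for $\tau\ge T$ it is the constant singleton) makes the limit trivial, so no delicate convergence argument is needed beyond the finiteness of $T$.
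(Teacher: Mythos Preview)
Your proposal is correct and follows essentially the same route as the paper: identify the event $\{\As_{[0,\tau]}^{N}(J^N,\tau)=\{i\}\}$ with the forward event that the offspring of the time-$0$ individual at line $i$ have fixed by time $\tau$, invoke the almost-sure finiteness of the absorption time $T_a^N$ of the offspring-type process, and pass to the limit. One minor remark: the detour through ``stationarity and reversibility of the Poisson inputs'' is unnecessary, since in Section~\ref{ASG} the typed ASG is read off from the \emph{same} realisation of $\Lambda^N$ that drives the forward Moran model, so the identification between ASG ancestors and forward ancestors is deterministic rather than merely distributional.
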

\begin{proof}
Let $T_a^N:=\inf\{u>0:\Theta^{0,N}(u)\in\As_N\}$, where $\Theta^{0,N}$ is the offspring-type process defined in Section \ref{sapv} with initial configuration given by $\theta_i^{0,N}(0)=\{i\}$ and $j_i^{0,N}(0)=J^N_i$, $i\in[N]$. In other words, $T_a^N$ is the first time that the progeny of one of the individuals at time $t=0$ take over in the population. The mentioned individual is the common ancestor at time $0$. In addition,
$$\left\{\As_{[0,T_a^N]}^{N}(J^N,T_a^N)=\{i\}\right\}=\left\{i \textrm{ is the common ancestor at time $0$},\, j^{0,N}(0)=J^N\right\}.$$
Therefore, we have
$$h_k^N=P_k\left(\As_{[0,T_a^N]}^{N}(J^N,T_a^N)=\{i\},\textrm{ for some $i\in[N]$},\, J_i^N=0\right).$$
Note also that, on $\{\tau>T_a^N\}$, we have
$$\left\{\As_{[0,T_a^N]}^{N}(J^N,T_a^N)=\{i\}\right\}=\left\{\As_{[0,\tau]}^{N}(J^N,\tau)=\{i\}\right\}.$$
Since in addition, $T_a^N$ is almost surely finite, we deduce that
\begin{align*}
h_k^N&=\lim\limits_{\tau\rightarrow\infty}P_k\left(\As_{[0,\tau]}^{N}(J^N,\tau)=\{i\},\textrm{ for some $i\in[N]$},\, J_i^N=0,\, T_a^N<\tau\right)\\
&=\lim\limits_{\tau\rightarrow\infty}P_k\left(\As_{[0,\tau]}^{N}(J^N,\tau)=\{i\},\textrm{ for some $i\in[N]$},\, J_i^N=0\right),
\end{align*}
and the proof is accomplished.
\end{proof}
\subsection{A Markov version of the ASG and the bottlenecks}\label{MASG}
An important fact is that we may construct the untyped ASG in a Markovian way. By this we mean that we can construct a Markov process $\chi^{M,N}:=(\As_\beta^{M,N},\varDelta^{M,N}_\beta)_{\beta\geq 0}$, where\\
$\bullet$ $\As_\beta^{M,N}\subset[N]$ represents the set of potential ancestors at time $\beta$ of an initial sample $M$, i.e the analogue of $\As_{[0,\beta]}^{M,N}(*,\beta)$ given in Section \ref{ASG}.\\
$\bullet$ $\varDelta^{M,N}:=\{\eta_{i}^{0,N}(M,\cdot),\eta_{i}^{1,N}(M,\cdot),\{\eta_{i,j}^{\vartriangle,N}(M,\cdot),\eta_{i,j}^{\blacktriangle,N}(M,\cdot)\}_{j\in[N]/\{i\}} \}_{i\in[N]}$ is a collection of counting processes encoding with their jumps the reproduction and mutation events involving the potential ancestors of $M$.

To see this, we first consider $\Lambda^N:=(\Lambda^N_t)_{t\in\Rb}$ the reproduction-mutation process defined on the entire real line, i.e. the graphical representation of the Moran model between $-\infty$ and $\infty$. Since $\Lambda^N$ is a finite collection of independent Poisson processes, it follows that $$(\Lambda_t^N)_{t\in[-\tau,0]}\overset{(d)}{=}(\Lambda_t^N)_{t\in[0,\tau]},\quad\textrm{and}\quad\hat{\Lambda}^N:=(\Lambda^N_{-t})_{t\in\Rb}\overset{(d)}{=}{\Lambda}^N.$$
These identities in law motivate the following construction of the process $\chi^{M,N}$. We start with the sample $M$ and we read off the configuration of arrows and circles given by $\hat{\Lambda}^N$ as follows: \\
$\bullet$ if $t=-\beta$ is a jump of the process $\lambda_{i,j}^{\vartriangle,N}$ and $j\in\As_{\beta^-}^{M,N}$, we have two options:

$\star$ if $i\in\As_{\beta^-}^{M,N}$, a collision occurs. We set $\As_{\beta}^{M,N}=\As_{\beta^-}^{M,N}$.

$\star$ if $i\notin\As_{\beta^-}^{M,N}$, a branching occurs. We set $\As_{\beta}^{M,N}=\As_{\beta^-}^{M,N}\cup\{i\}$.\\ 
In both cases, we set $\eta_{i,j}^{\vartriangle,N}(M,\beta)=\eta_{i,j}^{\vartriangle,N}(M,\beta)+1$.\\
$\bullet$ If $t=-\beta$ is a jump of $\lambda_{i,j}^{\blacktriangle,N}$ and $j\in\As_{\beta^-}^{M,N}$, we have two possibilities:

 $\star$ if $i\in\As_{\beta^-}^{M,N}$, a coalescence occurs. We set $\As_{\beta}^{M,N}=\As_{\beta^-}^{M,N}/\{j\}$.
 
 $\star$ if $i\notin\As_{\beta^-}^{M,N}$, a relocation occurs and we set $\As_{\beta}^{M,N}=(\As_{\beta^-}^{M,N}/\{j\})/\cup\{i\}$.\\
In both cases, we set $\eta_{i,j}^{\blacktriangle,N}(M,\beta)=\eta_{i,j}^{\blacktriangle,N}(M,\beta^-)+1$.\\
$\bullet$ if $t=-\beta$ is a jump of $\lambda_{i}^{0,N}$ and $i\in\As_{\beta^-}^{M,N}$, a mutation to type $0$ occurs. We set $\As_{\beta}^{M,N}=\As_{\beta^-}^{M,N}$ and $\eta_{i}^{0,N}(M,\beta)=\eta_{i}^{0,N}(M,\beta^-)+1$.\\
$\bullet$ if $t=-\beta$ is a jump of $\lambda_{i}^{1,N}$ and $i\in\As_{\beta^-}^{M,N}$, a mutation to type $1$ happens. We set $\As_{\beta}^{M,N}=\As_{\beta^-}^{M,N}$ and $\eta_{i}^{1,N}(M,\beta)=\eta_{i}^{1,N}(M,\beta^-)+1$.

The so-constructed process $\chi^{M,N}$ is clearly a Markov process, and leads to the following definition of the untyped ASG of the sample $M$ in the interval $[0,\tau]$:
$$\As_{[0,\tau]}^{M,N}(*):=\left(\chi_{\beta}^{M,N}\right)_{\beta\in[0,\tau]}.$$
We call $\chi^{M,N}$ the \textit{ancestral selection process}. Another advantage of this construction is that, if $\tau_1>\tau_2>0$, $\As_{[0,\tau_2]}^{M,N}(*)$ is the restriction of $\As_{[0,\tau_1]}^{M,N}(*)$ to the interval $[0,\tau_2]$. Moreover, we can define the untyped ASG in the entire positive real line as $$\As_{[0,\infty)}^{M,N}(*):=\left(\chi_{\beta}^{M,N}\right)_{\beta\geq 0}.$$

Given a configuration of types, $J\in\{0,1\}^N$, at time $\beta=\tau$, the corresponding ASG with types in $[0,\tau]$, $\As_{[0,\tau]}^{M,N}(J)$, is obtained by propagating types in $\As_{[0,\tau]}^{M,N}(*)$ and extracting the true genealogy as before.
 
Note that the process $K^{M,N}:=(K_\beta^{M,N})_{\beta\geq 0}$, counting the lines in the untyped ASG of the sample $M$, i.e. $K_\beta^{M,N}:=|\As_\beta^{M,N}|$, is a birth-death process with rates:
\begin{equation*}
q_{K^N}(k,k-1):=\frac{k(k-1)}{N},\quad\textrm{and}\quad q_{K^N}(k,k+1):=\frac{k(N-k)s}{N}.
\end{equation*}
As in the diffusion limit setting (see \cite{Popf13}), for $\tau$ sufficiently large, $\As_{[0,\tau]}^{M,N}(*)$ has bottlenecks, i.e. times at which it consists of a single line (see Fig. \ref{fig6}). Indeed, define $T_b^{M,N}:=\inf\{v\geq 0: K_v^{M,N}=1\}$. Since $K^{M,N}$ is an irreducible Markov chain with finite state space, the time $T_b^{M,N}$ is almost surely finite. Moreover, for all $\tau>T_b^{M,N}$, $T_b^{M,N}$ is a bottleneck of $\As_{[0,\tau]}^{M,N}(*)$. 
\ifpdf
\begin{figure}[!ht]
\begin{picture}(0,0)%
\centerline{\resizebox*{7cm}{3.2cm}{\includegraphics{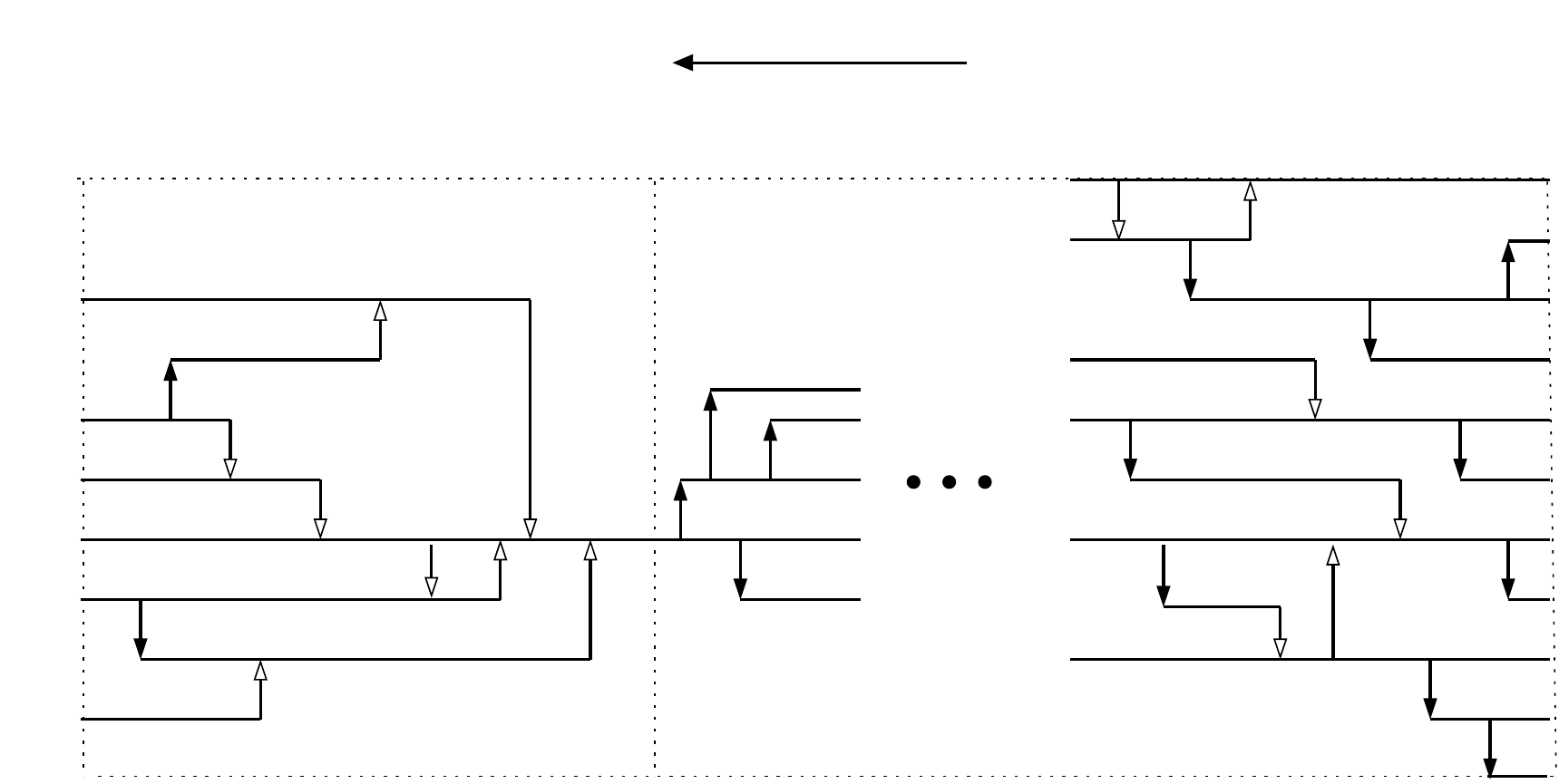}}}%
\end{picture}%
\setlength{\unitlength}{4144sp}%
\begingroup\makeatletter\ifx\SetFigFont\undefined%
\gdef\SetFigFont#1#2#3#4#5{%
  \reset@font\fontsize{#1}{#2pt}%
  \fontfamily{#3}\fontseries{#4}\fontshape{#5}%
  \selectfont}%
\fi\endgroup%
\centerline{\resizebox*{7cm}{3.2cm}{
\begin{picture}(7857,3940)(211,-4283)
\put(226,-1051){\makebox(0,0)[lb]{\smash{{\SetFigFont{12}{14.4}{\rmdefault}{\mddefault}{\updefault}{\color[rgb]{0,0,0}$\beta=\tau$}%
}}}}
\put(3331,-1051){\makebox(0,0)[lb]{\smash{{\SetFigFont{12}{14.4}{\rmdefault}{\mddefault}{\updefault}{\color[rgb]{0,0,0}$\beta=\beta_0$}%
}}}}
\put(7651,-1051){\makebox(0,0)[lb]{\smash{{\SetFigFont{12}{14.4}{\rmdefault}{\mddefault}{\updefault}{\color[rgb]{0,0,0}$\beta=0$}%
}}}}
\put(4096,-466){\makebox(0,0)[lb]{\smash{{\SetFigFont{12}{14.4}{\rmdefault}{\mddefault}{\updefault}{\color[rgb]{0,0,0}$\beta$}%
}}}}
\end{picture}%
}}
\caption{\label{fig6} Untyped ASG with a bottleneck at backward time $\beta=\beta_0$.}
\end{figure}
\fi

In particular, if we are interested in the common ancestor type distribution, instead of following the ancestry of the whole population, we can equivalently follow the ancestry of the one individual at the bottleneck $T_b^N$. The following lemma formalises the fact that sooner or later all the lines in the sample coalesce into the ancestral line.
\begin{lemma}\label{lasg2}
For all $k\in[N]_0$, we have
$$h_k^N=\lim\limits_{\tau\rightarrow\infty} P_k\left(\As_{[0,\tau]}^{\{1\},N}(J^N,\tau)=\{i\},\textrm{ for some $i\in[N]$},\, J_i^N=0\right).$$
\end{lemma}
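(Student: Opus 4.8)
The plan is to reduce the statement to Lemma \ref{lasg1} by coupling the ancestral selection graphs of the samples $\{1\}$ and $[N]$. Write $B_\tau^{[N]}$ for the event $\{\As_{[0,\tau]}^{N}(J^N,\tau)=\{i\}\text{ for some }i\in[N],\ J^N_i=0\}$ and $B_\tau^{\{1\}}$ for the same event with $\{1\}$ in place of $[N]$. By Lemma \ref{lasg1} we already have $P_k(B_\tau^{[N]})\to h_k^N$ as $\tau\to\infty$, so it suffices to exhibit a $P_k$-a.s.\ finite random time $T$ with $B_\tau^{\{1\}}=B_\tau^{[N]}$ on $\{\tau>T\}$; then $|P_k(B_\tau^{\{1\}})-P_k(B_\tau^{[N]})|\le P_k(T\ge\tau)\to 0$ and the conclusion follows.

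I would take $T:=T_b^{[N],N}$, the first bottleneck of the untyped ASG of the whole population, which is $P_k$-a.s.\ finite by the discussion preceding the lemma. Using the Markovian construction of Section \ref{MASG}, both $\As_{[0,\tau]}^{\{1\},N}(*)$ and $\As_{[0,\tau]}^{N}(*)$ are read off from the same time-reversed process $\hat\Lambda^N$, and a short induction over the jumps of $\hat\Lambda^N$, checking branching, collision, coalescence, relocation and mutation one by one, yields the monotonicity $\As_{[0,\tau]}^{\{1\},N}(*,\beta)\subseteq\As_{[0,\tau]}^{N}(*,\beta)$ for all $\beta$. Since the set of potential ancestors of a non-empty sample is always non-empty, at $\beta=T$ the inclusion forces $\As_{[0,\tau]}^{\{1\},N}(*,T)=\As_{[0,\tau]}^{N}(*,T)=\{\ell\}$ for a single line $\ell$; hence the two graphs coincide on $[T,\tau]$ whenever $\tau>T$. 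Propagating the types $J^N$ from $\beta=\tau$ and extracting the true genealogy, the true ancestral line of individual $1$ and the (now unique) ancestral line of the whole population both pass through $\ell$ at time $T$ and agree thereafter, so they reach the same line $\{i\}$ at $\beta=\tau$, with $J^N_i$ the type of the common ancestor. This gives $B_\tau^{\{1\}}=B_\tau^{[N]}$ on $\{\tau>T\}$, as required.

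The step I expect to be the only mildly delicate one is the monotone coupling $\As^{\{1\},N}(*)\subseteq\As^{N}(*)$ together with the (intuitively obvious) fact that the true-genealogy extraction commutes with restricting to the part of the ASG past a bottleneck; both are routine given Section \ref{MASG}. One may also bypass them entirely by arguing forward in time: at the fixation time $T_a^N$ of Section \ref{sapv} the whole population consists of descendants of the common ancestor, so for every $\tau>T_a^N$ the time-$0$ true ancestor of individual $1$ is exactly the common ancestor; since $T_a^N<\infty$ a.s., one concludes as before with $T_a^N$ in the role of $T$.
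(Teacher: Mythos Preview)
Your proof is correct, and both routes you sketch are sound. The monotone coupling $\As^{\{1\},N}_\beta\subseteq\As^{[N],N}_\beta$ holds pathwise (the case distinctions you allude to do go through; the only mildly non-obvious case is a neutral arrow that is a relocation for the smaller graph but a coalescence for the larger, and this still preserves inclusion), so at the bottleneck $T=T_b^{[N],N}$ the two graphs collapse to the same single line and agree on $[T,\tau]$. Your alternative via $T_a^N$ is even cleaner and essentially re-runs the argument of Lemma~\ref{lasg1} for the sample $\{1\}$ directly.

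The paper's proof is organised differently. Instead of coupling the two ASGs on the same realisation, it works only with the full-population ASG until the bottleneck $T_b^N$, then applies the strong Markov property at $T_b^N$: conditionally on $\Fs_{T_b^N}^N$, the remaining probability is $p_k(\tau-T_b^N,\As_{T_b^N}^N)$, the probability that the type-$0$ event occurs for a fresh single-line ASG started at the bottleneck line. Exchangeability of the lines then gives $p_k(v,\{j\})=p_k(v,\{1\})$, and dominated convergence finishes. So the paper trades your pathwise inclusion argument for a Markov-property-plus-exchangeability argument.

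What each buys: your coupling argument is more elementary and avoids invoking the strong Markov property and the exchangeability step; it also makes transparent that the two events literally coincide after the bottleneck, not just in law. The paper's approach, on the other hand, does not require checking the monotonicity of the set-valued process event by event, and isolates the role of exchangeability explicitly (which is thematically relevant later when relocation events are declared irrelevant). Either argument would be acceptable here.
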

\begin{proof}
 Since $T_b^N$ is almost surely finite, we deduce from Lemma \ref{lasg1} that
 $$h_k^N=\lim\limits_{\tau\rightarrow\infty} P_k\left(\As_{[0,\tau]}^{N}(J^N,\tau)=\{i\},\textrm{ for some $i\in[N]$},\, J_i^N=0,\, \tau>T_b^N\right).$$
 Note that, since $T_b^N$ is a bottleneck, for $\tau>T_b^N$, the true genealogy of $\As_{[0,\tau]}^{N}(*)$ in the interval $[T_b^N,\tau]$ depends only on $(\chi^N_\beta)_{\beta\in[T_b^N,\tau]}$ and on the configuration of types $J^N$ at time $\tau$. In addition, the true genealogy at any time in $[T_b^N,\tau]$ consists of only one individual. In particular, we denote by $i((\chi^N_\beta)_{\beta\in[T_b^N,\tau]},J^N)$ the unique ancestor of the whole population at time $\tau$. Therefore, we have
 \begin{equation}\label{b1}
 h_k^N=\lim\limits_{\tau\rightarrow\infty}P_k\left(J_{i((\chi^N_\beta)_{\beta\in[T_b^N,\tau]},J^N)}^N=0,\, \tau>T_b^N\right). 
 \end{equation}
 Now, we consider $\mathbb{F}^N:=(\Fs^N_\beta)_{\beta\geq 0}$ the natural filtration associated to the process $\hat{\Lambda}^N$, and we note that $T_b^N$ is an $\Fb$-stopping time. Therefore, conditioning on $\Fs_{T_b^N}^N$, and applying the Markov property, we obtain
 \begin{equation}\label{b2}
 P_k\left(J_{i((\chi^N_\beta)_{\beta\in[T_b^N,\tau]},J^N)}^N=0,\, \tau>T_b^N\right)=E_k\left[1_{\{\tau>T_b^N\}}\,p_k\left(\tau-T_b^N,\As_{T_b^N}^N\right)\right],
 \end{equation}
where, for $v>0$ and $j\in[N]$, 
$$p_k(v,\{j\})=P_k\left(J_{i((\chi^{\{j\},N}_\beta)_{\beta\in[0,v]},J^N)}^N=0\right),$$
and $i((\chi^{\{j\},N}_\beta)_{\beta\in[0,v]},J^N)$ is the ancestor at time $v$ of the individual placed at level $j$ at time $0$, given the configuration of types $J^N$. Equivalently, we have
$$\As_{[0,v]}^{\{j\},N}(J^N,v)=\{i((\chi^{\{j\},N}_\beta)_{\beta\in[0,v]},J^N)\}.$$
We conclude that
$$p_k(v,\{j\})=P_k\left(\As_{[0,v]}^{\{j\},N}(J^N,v)=\{i\},\textrm{ for some $i\in[N]$},\, J_i^N=0\right).$$
Moreover, due to the exchangeability of the lines, $p_k(v,\{j\})$ does not depend on $j$, and then $p_k(v,\{j\})=p_k(v,\{1\})$. Plugging this in \eqref{b2} and using \eqref{b1}, we get
 \begin{equation*}
 h_k^N=\lim\limits_{\tau\rightarrow\infty}E_k\left[1_{\{\tau>T_b^N\}}\,p_k\left(\tau-T_b^N,\{1\}\right)\right]. 
 \end{equation*}
 Since $T_b^N$ is almost surely finite, we achieve the proof applying the dominated convergence theorem.
\end{proof}
\begin{remark}
 Due to the exchangeability of the lines, the relocation events do not affect the common ancestor type distribution, and therefore, can be ignored.
\end{remark}
\subsection{Classification of paths in the ASG}\label{cp}
The purpose of this paragraph is to better understand the composition of the untyped ASG and the passage to the ASG with types. Moreover, we would like to discriminate between the relevant and the irrelevant information provided by the ASG. The discussion presented here will serve also as a motivation to introduce new objects encoding more efficiently the common ancestor type distribution. With this purpose, we fix $\tau>0$, $M\subset[N]$ and a realisation of $\As_{[0,\tau]}^{M,N}(*)$. We call a \textit{path} in $\As_{[0,\tau]}^{M,N}(*)$ a subset of $\bigcup_{\beta\in[0,\tau]}\{\beta\}\times\As^{M,N}_\beta$ of the form $$\gamma:=\left[\bigcup\limits_{k=1}^m[\tau_{k-1},\tau_k)\times\{i_k\}\right]\cup\left[\{\tau_m\}\times\{i_{m+1}\}\right],$$ 
where $0=\tau_0<\cdots<\tau_m=\tau$,  $i_k\in[N]$, and for all $i_k\neq i_{k+1}$, $\tau_k$ is a jumping time of $\lambda_{i_{k+1},i_{k}}^{\vartriangle,N}$ or $\lambda_{i_{k+1},i_{k}}^{\blacktriangle,N}$, i.e. there is an arrow going from $i_{k+1}$ to $i_k$.

The path $\gamma$ is said to be \textit{neutral} if it uses only neutral arrows. If the path is not neutral, we denote by $\tau_{k_1}<\dots<\tau_{k_n}$, the times where the selective arrows appear, and $\tau_{k_{n+1}}:=\tau$. We call $\gamma$ \textit{almost neutral} if it is not neutral, there is at least one mutation in $[\tau_{k_n},\tau]$, the first mutation after $\tau_{k_n}$ being to type $0$, and on each interval $[\tau_{k_i},\tau_{k_{i+1}})$ containing mutations, the first mutation after $\tau_{k_i}$ is to type $0$. We say that $\gamma$ is fictitious, if there is an interval $[\tau_{k_i},\tau_{k_{i+1}})$ containing mutations, and such that the first mutation after $\tau_{k_i}$ is to type $1$. Finally, we say that $\gamma$ is \textit{truly selective} if it is nor neutral, nor almost neutral and nor fictitious. Equivalently, $\gamma$  is truly selective if there is no mutation in $[\tau_{k_n},\tau]$, and the restriction of $\gamma$ to $[\tau_{1},\tau_{k_n}]$ either has no mutations or is neutral or almost neutral. Examples of this classification of paths are illustrated in Figure \ref{fig7}.

\ifpdf
\begin{figure}[!ht]
\begin{picture}(0,0)%
\centerline{\resizebox*{10cm}{3cm}{\includegraphics{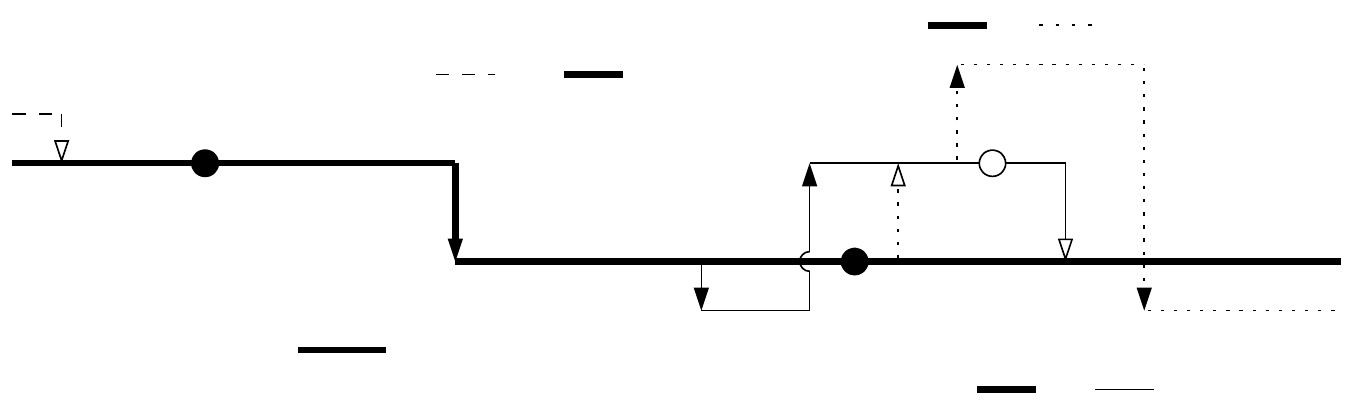}}}
\end{picture}%
\setlength{\unitlength}{4144sp}%
\begingroup\makeatletter\ifx\SetFigFont\undefined%
\gdef\SetFigFont#1#2#3#4#5{%
  \reset@font\fontsize{#1}{#2pt}%
  \fontfamily{#3}\fontseries{#4}\fontshape{#5}%
  \selectfont}%
\fi\endgroup%
\centerline{\resizebox*{10cm}{3cm}{
\begin{picture}(6165,1876)(169,145)
\put(2521,1649){\makebox(0,0)[lb]{\smash{{\SetFigFont{12}{14.4}{\rmdefault}{\mddefault}{\updefault}{\color[rgb]{0,0,0}$+$}%
}}}}
\put(226,389){\makebox(0,0)[lb]{\smash{{\SetFigFont{10}{12.0}{\rmdefault}{\mddefault}{\updefault}{\color[rgb]{0,0,0}$Neutral\,\, path:$}%
}}}}
\put(2926,1874){\makebox(0,0)[lb]{\smash{{\SetFigFont{10}{12.0}{\rmdefault}{\mddefault}{\updefault}{\color[rgb]{0,0,0}$Fictitious\,\, path:$}%
}}}}
\put(4726,1874){\makebox(0,0)[lb]{\smash{{\SetFigFont{12}{14.4}{\rmdefault}{\mddefault}{\updefault}{\color[rgb]{0,0,0}$+$}%
}}}}
\put(226,1649){\makebox(0,0)[lb]{\smash{{\SetFigFont{10}{12.0}{\rmdefault}{\mddefault}{\updefault}{\color[rgb]{0,0,0}$Truly\,\, selective\,\, path:$}%
}}}}
\put(2701,209){\makebox(0,0)[lb]{\smash{{\SetFigFont{10}{12.0}{\rmdefault}{\mddefault}{\updefault}{\color[rgb]{0,0,0}$Almost\,\, neutral\,\, path:$}%
}}}}
\put(4996,209){\makebox(0,0)[lb]{\smash{{\SetFigFont{12}{14.4}{\rmdefault}{\mddefault}{\updefault}{\color[rgb]{0,0,0}$+$}%
}}}}
\end{picture}
}}
\caption{\label{fig7} Examples of neutral, almost neutral, fictitious and truly selective paths extracted from Fig. \ref{fig3}.}
\end{figure}
\fi
Fictitious paths can not be used, independently of the configuration of types at time $\tau$. Hence, fictitious paths are part of the irrelevant information in $\As_{[0,\tau]}^{M,N}(*)$. In order to identify all the irrelevant material, we give a second classification of paths. 

From the construction of the untyped ASG, paths are never hit by a neutral arrow. In an almost neutral path, selective arrows are always used. We call $\gamma$ \textit{irrelevant} if it is fictitious or if it is hit by an almost neutral path. We say that $\gamma$ is relevant if it is not irrelevant. Irrelevant paths are never used, independently of the configuration of types, i.e. they contain irrelevant information on $\As_{[0,\tau]}^{M,N}(*)$. On the contrary, for a relevant path $\gamma$, there is always a configuration of types $J$ such that $\gamma\in\As_{[0,\tau]}^{M,N}(J)$. Motivated by this fact, we define the \textit{relevant untyped ASG} as
\begin{equation}\label{rasg}
\Rs_{[0,\tau]}^{M,N}(*):=\{\gamma\in \As_{[0,\tau]}^{M,N}(*):\,\gamma\textrm{ is a relevant path}\},
\end{equation}
and from the discussion above, we see that
\begin{equation}\label{rasg2}
 \Rs_{[0,\tau]}^{M,N}(*)=\bigcup\limits_{J\in\{0,1\}^N}\As_{[0,\tau]}^{M,N}(J).
\end{equation}
A relevant neutral or relevant almost neutral path, which is hit only by irrelevant or truly selective paths is called \textit{immune}. Immune paths are exactly the true ancestors of the sample $M$, when the configuration of types at time $\tau$ consists of only ones. In particular, there are at most $|M|$ immune paths in $\As_{[0,\tau]}^{M,N}(*)$.
 \subsection{The case without mutation}
We assume in this section that $u=0$. In this situation, as in the diffusion limit case (see \cite{Popf13}), a natural link emerges between the distribution of the common ancestor and the stationary number of lines in the ancestral selection graph. 

Thanks to Lemma \ref{lasg2}, it is sufficient to analyse the ASG starting with a single individual at level $1$. Moreover, since there are no mutations, we have only neutral and truly selective paths in $\As_{[0,\tau]}^{\{1\},N}(*)$. In addition, we have exactly one immune path, which is neutral. All the other paths are truly selective and hit the immune path at some time in $[0,\tau]$. Therefore, the immune path is the common ancestor if and only if all the individuals in $\As_{\tau}^{\{1\},N}$ are of type $1$. Since truly selective paths can be only used by type $0$ individuals, we deduce that the common ancestor is of type $0$ if and only if one of the individuals in $\As_{\tau}^{\{1\},N}$ is of type $0$. Thus, conditioning on the number of lines in $\As_{\tau}^{\{1\},N}$, we get
$$P_k\left(J_i^N=0,\textrm{ where: }\,\As_{[0,\tau]}^{\{1\},N}(J^N,\tau)=\{i\}\right)=\sum\limits_{\ell=1}^N P(K_\tau^{\{1\},N}=\ell)\,\frac{\binom{N}{k}-\binom{N-\ell}{k}}{\binom{N}{k}}.$$
Taking the limit when $\tau$ tends to infinity in the previous expression and using Lemma \ref{lasg2}, we obtain
$$h_k^N=\sum\limits_{\ell=1}^N P(K_\infty^{\{1\},N}=\ell)\,\frac{\binom{N}{k}-\binom{N-\ell}{k}}{\binom{N}{k}}=\frac{k}{N}\sum\limits_{n=0}^{N-k}P(K_\infty^{\{1\},N}>n) \prod\limits_{j=0}^{n-1}\frac{N-k-j}{N-1-j}.$$

Using the detailed balance equation, it follows that $K_\infty^{\{1\},N}$, the stationary number of lines in the untyped ASG, is distributed as a binomial random variable with parameter $N$ and $s/(1+s)$ conditioned to be strictly positive. Thus, we have recovered the results of Section \ref{wma}, and established that  
the random variable $L_N$ corresponds to the stationary number of lines in the ASG.
\subsection{The case with mutation: the relevant ASG}
From now on, we assume that $s,u>0$. In this case, as discussed in Section \ref{cp}, paths which are never used appear in the untyped ASG. The untyped relevant ASG defined in Section \ref{cp} permits to obtain a first graphical interpretation to Eq. \eqref{hkak}. 

We denote by $R_\tau^N$ the number of lines at time $\tau$ in the relevant untyped ASG, $\Rs_{[0,\tau]}^{\{1\},N}$. The graphical representation of $h_k^N$ is given in the next lemma.
\begin{lemma}\label{rasghk}
The random variables $(R_\tau^N)_{\tau>0}$ converge in distribution to a random variable $R_\infty^N$ with values in $[N]$ and
$$h_k^N:=\frac{k}{N}\sum\limits_{n=0}^{N-k}P(R_\infty^{N}>n) \prod\limits_{j=0}^{n-1}\frac{N-k-j}{N-1-j}.$$
\end{lemma}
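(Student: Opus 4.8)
The plan is to mirror the mutation-free argument from the preceding subsection, but now working inside the relevant untyped ASG $\Rs_{[0,\tau]}^{\{1\},N}$ instead of the full ASG. First I would establish convergence in distribution of $(R_\tau^N)_{\tau>0}$. The natural route is to show that the process $(R_\beta^N)_{\beta\geq 0}$ counting the number of relevant lines is itself a (time-homogeneous) continuous-time Markov chain on $[N]$, or at least that it can be dominated by / compared with such a chain; since its state space is finite and it is irreducible (selective arrows from outside push the count up, mutations and coalescences push it down, and one checks all states communicate), it has a unique stationary distribution, and $R_\tau^N \xrightarrow{(d)} R_\infty^N$ follows from standard ergodic theory for finite-state Markov chains. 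The key modelling input here is the classification of paths from Section \ref{cp}: a line is \emph{irrelevant} exactly when it is fictitious or is hit by an almost-neutral path, and these conditions are determined by the graph read off backwards, so the thinning that produces $\Rs$ from $\As$ is adapted to the backward filtration and $R^N$ is indeed Markov with respect to it.

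Next, for the combinatorial identity, I would use Lemma \ref{lasg2} to reduce $h_k^N$ to $\lim_{\tau\to\infty} P_k(\As_{[0,\tau]}^{\{1\},N}(J^N,\tau)=\{i\},\ J_i^N=0)$. The decisive structural fact, to be extracted from Section \ref{cp}, is that among the $R_\tau^N$ relevant lines at backward time $\tau$ there is exactly one \emph{immune} line, and the true ancestor is of type $0$ if and only if \emph{at least one} of the $R_\tau^N$ relevant lines carries type $0$ at time $\tau$: if all relevant lines are type $1$, only neutral/almost-neutral transmission along the immune line is possible and the ancestor is type $1$; if some relevant line is type $0$, that line's path (truly selective or almost neutral) can be followed back and transmits type $0$ to the ancestor. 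Since under $P_k$ the type configuration $J^N$ is independent of the graph and uniform over the $\binom{N}{k}$ configurations with exactly $k$ zeros, conditioning on $R_\tau^N=\ell$ gives that the probability the ancestor is type $0$ equals $1-\binom{N-\ell}{k}/\binom{N}{k}$ (the chance that the $\ell$ distinguished lines all avoid the $k$ type-$0$ labels, subtracted from one). Summing over $\ell$ and letting $\tau\to\infty$ yields
\begin{equation*}
h_k^N=\sum_{\ell=1}^N P(R_\infty^N=\ell)\,\frac{\binom{N}{k}-\binom{N-\ell}{k}}{\binom{N}{k}}.
\end{equation*}

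Finally I would convert the right-hand side into the stated form by the same Abel-summation / hockey-stick manipulation already used twice in the paper (in Section \ref{wma} and in the Corollary following Proposition \ref{chan}): writing $\binom{N}{k}-\binom{N-\ell}{k}=\sum_{j=N-\ell}^{N-1}\binom{j}{k}$, exchanging the order of summation, and recognising $\sum_{j}\binom{j}{k}/\binom{N}{k}$ as $\tfrac{k}{N}\prod_{j=0}^{n-1}\tfrac{N-k-j}{N-1-j}$ after reindexing by $n=N-1-j$. This gives
\begin{equation*}
h_k^N=\frac{k}{N}\sum_{n=0}^{N-k}P(R_\infty^N>n)\prod_{j=0}^{n-1}\frac{N-k-j}{N-1-j},
\end{equation*}
which is the claim.

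\textbf{The main obstacle} I anticipate is not the combinatorics, which is routine, but the careful verification of the two graph-theoretic claims underpinning the argument: (i) that $R^N$ is genuinely Markov — i.e. that "relevance" of a line can be decided online as the ASG is built backwards, without peeking into the future, so that the pruned process is a well-defined Markov chain with the right transition rates — and (ii) that there is always exactly one immune line when starting from a single sampled individual, and that "ancestor is type $0$" is equivalent to "some relevant line is type $0$", which requires a clean case analysis of how type $0$ propagates through truly selective versus almost neutral segments. These are precisely the points the classification in Section \ref{cp} was set up to handle, so the proof should invoke that discussion rather than re-derive it; the delicate part is making the "exactly one immune line" statement and the Markov property fully rigorous in the finite-$N$ setting where collisions (absent in the diffusion limit) must also be accounted for among the possible events that leave $R^N$ unchanged.
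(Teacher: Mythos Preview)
Your structural analysis and combinatorics are essentially the paper's argument: reduce via Lemma~\ref{lasg2}, observe that in $\Rs_{[0,\tau]}^{\{1\},N}(*)$ there is exactly one immune path with all other relevant paths truly selective and hitting it, conclude that the ancestor is of type~$0$ iff at least one relevant line at time~$\tau$ carries type~$0$, and then compute hypergeometrically and reindex.

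Where you diverge from the paper is in the convergence of $(R_\tau^N)_{\tau>0}$. You propose to show that $(R_\beta^N)_{\beta\geq 0}$ is an irreducible continuous-time Markov chain on $[N]$ and invoke ergodicity. The paper does \emph{not} do this, and with reason: relevance of a path in $\As_{[0,\tau]}^{\{1\},N}(*)$ is a global property on $[0,\tau]$ (it depends on the pattern of mutations between consecutive selective arrows along the whole path, and on whether the path is hit by an almost-neutral path), so the bare count $R_\tau^N$ is not obviously Markov in $\tau$; knowing only $R_\tau^N=r$ does not retain which of the $r$ lines is immune, nor the internal selective/mutation structure needed to update relevance. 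Indeed, right after this lemma the paper remarks that ``it is not easy to describe the law of $R_\infty^N$'' and introduces the pruned LD-ASG precisely to obtain a genuine Markov line-counting process.

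The paper's route to convergence is instead a soft compactness-plus-uniqueness argument: since all $R_\tau^N$ live on the finite set $[N]$, every sequence $(R_{\tau_n}^N)$ is tight and has a weakly convergent subsequence with limit, say, $\widetilde R^N$; passing to the limit in the identity
\[
P_k\bigl(J_i^N=0,\ \Rs_{[0,\tau]}^{\{1\},N}(J^N,\tau)=\{i\}\bigr)=\sum_{\ell=1}^N P(R_\tau^N=\ell)\,\frac{\binom{N}{k}-\binom{N-\ell}{k}}{\binom{N}{k}}
\]
gives $h_k^N=\tfrac{k}{N}\sum_{n=0}^{N-k}P(\widetilde R^N>n)\prod_{j=0}^{n-1}\tfrac{N-k-j}{N-1-j}$; and then the inversion formula (Lemma~\ref{invf}) shows that the tail probabilities $P(\widetilde R^N>n)$ are \emph{uniquely determined} by the $(h_k^N)_k$. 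Hence every subsequential limit has the same law, and the full family converges. This bypasses your obstacle~(i) entirely: no Markov property, no transition rates, no ergodic theorem.
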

\begin{proof}
From Lemma \ref{lasg2} and Eq. \eqref{rasg2}, we deduce that
\begin{equation}\label{rasghkf}
 h_k^N=\lim\limits_{\tau\rightarrow\infty} P_k\left(\Rs_{[0,\tau]}^{\{1\},N}(J^N,\tau)=\{i\},\textrm{ for some $i\in[N]$},\, J_i^N=0\right).
\end{equation}
From the definition of $\Rs_{[0,\tau]}^{\{1\},N}(*)$, we see that it consists of one immune path and truly selective paths hitting the immune path at some time between $0$ and $\tau$. If $J^N$ consists of only ones, the ancestral line is the immune path and the type of the true ancestor is $1$. If $J^N$ is not identically one, we have two possibilities: (1) there is only one $0$ in $J^N$ at the immune path, and the ancestral line is the immune path, or (2) there is at least one $0$ in $J^N$ at a truly selective path, then the ancestral line is the truly selective path with type $0$ at time $\tau$, which is not hit by another truly selective path with type $0$ at time $\tau$ (there is always such a path, since the number of truly selective paths is almost surely finite). In both cases, the true ancestor is of type $0$. In conclusion, the true ancestor at time $\tau$ is of type $0$ if and only if one of the lines in the untyped relevant ASG is of type $0$ at time $\tau$. We conclude that
\begin{equation}\label{rasgbl}
 P_k\left(J_i^N=0,\textrm{ where: }\,\Rs_{[0,\tau]}^{\{1\},N}(J^N,\tau)=\{i\}\right)=\sum\limits_{\ell=1}^N P(R_\tau^{N}=\ell)\,\frac{\binom{N}{k}-\binom{N-\ell}{k}}{\binom{N}{k}}.
\end{equation}
Let $(R_{\tau_n}^N)_{n\geq 0}$ be a subsequence of $(R_\tau^N)_{\tau>0}$. Since the involved random variables share the same finite state space, we conclude that $(R_{\tau_n}^N)_{n\geq 0}$ is tight. Therefore, there is a subsequence $(R_{\tau_{n_k}}^N)_{k\geq 0}$ which is convergent in the weak sense. We denote its limit by $R_\infty^N$.  Using this and Equations \eqref{rasghkf} and \eqref{rasgbl}, we obtain
$$h_k^N=\sum\limits_{\ell=1}^N P(R_\infty^{N}=\ell)\,\frac{\binom{N}{k}-\binom{N-\ell}{k}}{\binom{N}{k}}=\frac{k}{N}\sum\limits_{n=0}^{N-k}P(R_\infty^{N}>n) \prod\limits_{j=0}^{n-1}\frac{N-k-j}{N-1-j}.$$
 Lemma \ref{invf} implies that the law of ${R}^{N}$ is uniquely determined by the common ancestor type distribution. Since this holds for any subsequence of $(R_\tau^N)_{\tau>0}$, the result follows.
\end{proof}
From the previous lemma, the random variable $L_N$ given in Proposition \ref{chan} corresponds to the asymptotic number of lines in the relevant untyped ASG. Unfortunately, it is not easy to describe the law of $(R_\infty^N)$. Therefore, we use a different approach in order to give a graphical explanation to the recurrence relation \eqref{an}.
\subsection{The case with mutation: the lookdown ASG}
 The recent work \cite{LKBW15} provides a graphical interpretation to Eq. \eqref{hkak} and to the recurrence relation \eqref{an}, in the context of the diffusion limit. This is done with the help of the pruned lookdown ancestral selection graph (pruned LD-ASG). Following the same lines, we obtain analogue interpretations in the finite case.  

We fix $M\subset[N]$ and $\tau>0$, and we consider a realisation of $\As_{[0,\tau]}^{M,N}(*)$. Let $0<\tau_1<\cdots<\tau_n<\tau$ be the corresponding coalescent, branching and collision times, and set $\tau_0:=0$, $\tau_{n+1}:=\tau$. The LD-ASG is obtained reordering the lines of $\As_{[0,\tau]}^{M,N}(*)$ at the times $\tau_k$. The correspondence between lines in the ASG and levels in the LD-ASG is given, for each $\beta\in[0,\tau]$, by a bijective function 
$$\pi_\beta:\As_\beta^{M,N}\rightarrow[K_{\beta}^{M,N}].$$
The function $\beta\in[0,\tau]\mapsto\pi_\beta$ remains constant on the intervals $[\tau_k,\tau_{k+1})$. If $M:=\{i_1,...,i_{|M|}\}$, with $i_1<\cdots<i_m$, we set $\pi_0(i_k):=k$. In addition, if we have constructed the LD-ASG in the interval $[0,\tau_k)$, with $k\leq n$, we extend its construction to $[\tau_k,\tau_{k+1})$ as follows (see Fig. \ref{fig8}):\\
$\bullet$ \textit{Coalescence}: if at time $\tau_k$ a neutral arrow goes from $i$ to $j$ in $\As_{[0,\tau]}^{M,N}(*)$, we draw a neutral arrow going from $\pi_{{\tau_k}^-}(i)\wedge\pi_{{\tau_k}^-}(j)$ to  $\pi_{{\tau_k}^-}(i)\vee\pi_{{\tau_k}^-}(j)$. The line $i$ is placed at level $\pi_{{\tau_k}^-}(i)\wedge\pi_{{\tau_k}^-}(j)$. The lines above $\pi_{{\tau_k}^-}(i)\vee\pi_{{\tau_k}^-}(j)$ are shifted one level downwards, and the others keep their positions, i.e. $\pi_{\tau_k}(i):= \pi_{{\tau_k}^-}(i)\wedge\pi_{{\tau_k}^-}(j)$, $\pi_{\tau_k}(\ell)=\pi_{{\tau_k}-}(\ell)-1$ if $\pi_{{\tau_k}^-}(\ell)>\pi_{{\tau_k}^-}(i)\vee\pi_{{\tau_k}^-}(j)$, and $\pi_{\tau_k}(\ell)=\pi_{{\tau_k}^-}(\ell)$ otherwise.\\
$\bullet$ \textit{Branching}: if at time $\tau_k$ the line $i$ branches into the lines $i$ and $j$ in $\As_{[0,\tau]}^{M,N}(*)$, an horizontal open arrowhead appears at level $\pi_{{\tau_k}^-}(i)$. The incoming branch $j$ emanates from it, and all the lines at levels $r\geq \pi_{{\tau_k}^-}(i)$ are shifted one level upwards, i.e. $\pi_{{\tau_k}}(j):=\pi_{{\tau_k}^-}(i)$, $\pi_{{\tau_k}}(\ell):=\pi_{{\tau_k}^-}(\ell)+1$ if $\pi_{{\tau_k}^-}(\ell)\geq\pi_{{\tau_k}^-}(i)$, and $\pi_{\tau_k}(\ell):=\pi_{{\tau_k}^-}(\ell)$ otherwise.\\
$\bullet$ \textit{Collision}: if at time ${\tau_k}$ the line $i$ collides the line $j$ in $\As_{[0,\tau]}^{M,N}(*)$, and $\pi_{{\tau_k}^-}(i)<\pi_{{\tau_k}^-}(j)$, we set $\pi_{{\tau_k}}:=\pi_{{\tau_k}^-}$, and we draw a selective arrow from $\pi_{{\tau_k}}(i)$ to $\pi_{{\tau_k}}(j)$.\\
$\bullet$ \textit{Exchange-collision}: if at time ${\tau_k}$ the line $i$ collides  with the line $j$ in $\As_{[0,\tau]}^{M,N}(*)$, and $\pi_{{\tau_k}^-}(i)>\pi_{{\tau_k}^-}(j)$, the line $i$ takes the position of line $j$, all the lines at levels $\pi_{{\tau_k}^-}(j)\leq r<\pi_{{\tau_k}^-}(i)$ are shifted one level upwards, and the positions of the other lines remain unchanged , i.e $\pi_{{\tau_k}}(i):=\pi_{{\tau_k}^-}(j)$, $\pi_{{\tau_k}}(\ell):=\pi_{{\tau_k}^-}(\ell)+1$ if $\pi_{{\tau_k}-}(j)\leq \pi_{{\tau_k}^-}(\ell)<\pi_{{\tau_k}^-}(i)$, and $\pi_{{\tau_k}}(\ell):=\pi_{{\tau_k}^-}(\ell)$ otherwise. In addition, an horizontal open arrowhead appears at level $\pi_{\tau_k}(i)$.\\
$\bullet$ \textit{Mutations}: a mutation event in the ASG is pasted at the corresponding level in the LD-ASG.\\
\ifpdf
\begin{figure}[!ht]
\begin{picture}(0,0)%
\centerline{\resizebox*{10.5cm}{1.5cm}{\includegraphics{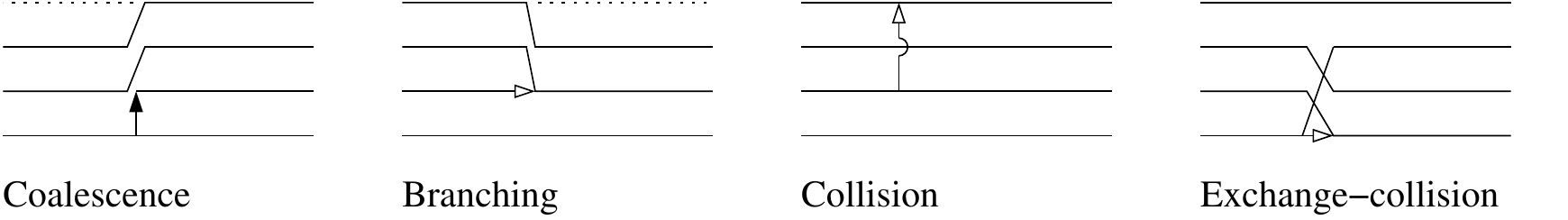}}}
\end{picture}%
\setlength{\unitlength}{4144sp}%
\begingroup\makeatletter\ifx\SetFigFont\undefined%
\gdef\SetFigFont#1#2#3#4#5{%
  \reset@font\fontsize{#1}{#2pt}%
  \fontfamily{#3}\fontseries{#4}\fontshape{#5}%
  \selectfont}%
\fi\endgroup%
\centerline{\resizebox*{10.5cm}{1.5cm}{
\begin{picture}(7951,1111)(1786,-935)
\end{picture}
}}
\caption{\label{fig8} Coalescence, branching and collisions in the LD-ASG}
\end{figure}
\fi
The resulting object is called the \textit{lookdown ancestral selection graph} in $[0,\tau]$ of the sample $M$ (see Fig. \ref{fig9}). We denote it by $L_{[0,\tau]}^{M,N}$. 
\ifpdf
\begin{figure}[!ht]
\begin{picture}(0,0)%
\centerline{\resizebox*{7.8cm}{3.5cm}{\includegraphics{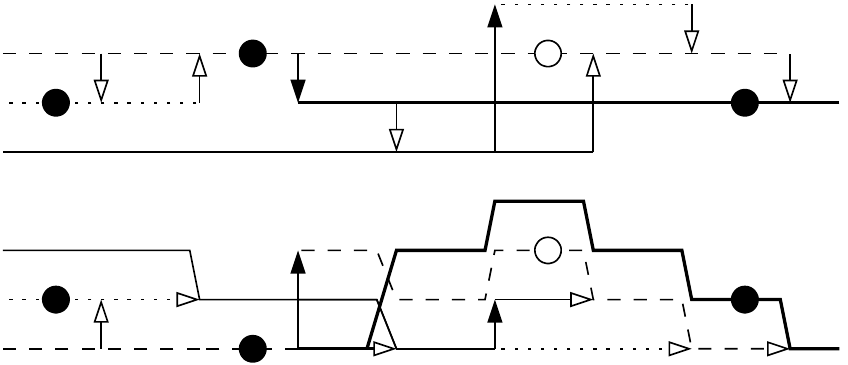}}}
\end{picture}%
\setlength{\unitlength}{4144sp}%
\begingroup\makeatletter\ifx\SetFigFont\undefined%
\gdef\SetFigFont#1#2#3#4#5{%
  \reset@font\fontsize{#1}{#2pt}%
  \fontfamily{#3}\fontseries{#4}\fontshape{#5}%
  \selectfont}%
\fi\endgroup%
\centerline{\resizebox*{7.8cm}{3.5cm}{
\begin{picture}(3859,1659)(2464,-1258)
\end{picture}%
}}
\caption{\label{fig9} The LD-ASG (below) corresponding to the untyped ASG (above). Time runs from right to left.}
\end{figure}
\fi

In what follows, we focus on the case $M=\{1\}$. In this case, $\As_{[0,\tau]}^{\{1\},N}$ has exactly one immune path, which can be identified in $L_{[0,\tau]}^{\{1\},N}$. However, we denote by \textit{immune line} a slightly different object. The immune line in the LD-ASG is the line which is at any time the ancestral line if all the lines at that time are of type one. 

Now, we proceed to prune the LD-ASG. We read off $L_{[0,\tau]}^{\{1\},N}$ from time $\beta=0$ to $\beta=\tau$ (from left to right in Fig. \ref{fig9}) using the following rules (see Fig. \ref{fig10}). If we encounter a mutation to type $0$ at the immune line, we don't do anything. If we encounter a mutation to type $0$ at an occupied level $i$ different from the immune line (i.e. $i$ is till now an almost neutral path), we insert at this time a barrier from level $i$ till level $N$, and we kill all the lines above the level $i$. If we meet a mutation to type $1$ at an occupied level $i$ different to the immune line (i.e. $i$ is a fictitious path), we kill the line $i$ and we shift all the lines above one level downwards. If we meet a mutation to type $1$ at the immune line, we relocate the immune line to the currently highest occupied level, and all the lines which were above the immune line are shifted one level downwards. The resulting object is called the \textit{pruned LD-ASG} and denoted by $\Ls_{[0,\tau]}^{N}$. Note that the pruning procedure can transform collisions or exchange-collisions in branching events.
\ifpdf
\begin{figure}[!h]
\begin{picture}(0,0)%
\centerline{\resizebox*{10.5cm}{1.1cm}{\includegraphics{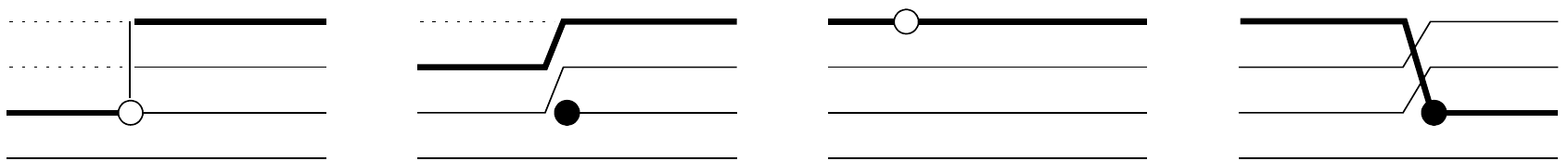}}}
\end{picture}%
\setlength{\unitlength}{4144sp}%
\begingroup\makeatletter\ifx\SetFigFont\undefined%
\gdef\SetFigFont#1#2#3#4#5{%
  \reset@font\fontsize{#1}{#2pt}%
  \fontfamily{#3}\fontseries{#4}\fontshape{#5}%
  \selectfont}%
\fi\endgroup%
\centerline{
\begin{picture}(7716,780)(193,-298)
\end{picture}
}
\caption{\label{fig10} Pruning procedure in the LD-ASG. The effect of the mutations differs in the immune line, represented in the figure in bold.}
\end{figure}
\begin{figure}[!ht]
\begin{picture}(0,0)%
\centerline{\resizebox*{7.8cm}{1.6cm}{\includegraphics{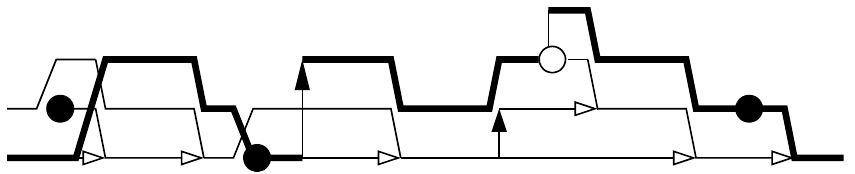}}}
\end{picture}%
\setlength{\unitlength}{4144sp}%
\begingroup\makeatletter\ifx\SetFigFont\undefined%
\gdef\SetFigFont#1#2#3#4#5{%
  \reset@font\fontsize{#1}{#2pt}%
  \fontfamily{#3}\fontseries{#4}\fontshape{#5}%
  \selectfont}%
\fi\endgroup
\centerline{
\begin{picture}(3891,780)(1318,-1033)
\end{picture}%
}
\caption{\label{fig11} Pruned LD-ASG corresponding to Fig. \ref{fig9}. The bold line represents the immune line.}
\end{figure}
\fi
\begin{proposition}\label{tal} The level of the ancestral line at time $\tau$ in $\Ls_{[0,\tau]}^{N}$ is either the lowest level of type $0$ at time $\tau$ or, the level of the immune line if all the lines are of type $1$ at time $\tau$. In particular, the ancestral line is of type $1$ at time $\tau$ if and only if all the lines in $\Ls_{[0,\tau]}^{N}$ are of type 1 at time $\tau$.
\end{proposition}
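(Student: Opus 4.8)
The plan is to prove a statement valid at every backward time, of which the proposition is the case $\beta=\tau$. Fix a realisation of $\Ls_{[0,\tau]}^N$. For every $\beta\in[0,\tau]$ and every assignment of types to the lines of $\Ls_{[0,\tau]}^N$ present at time $\beta$, propagate these types forward in $t$ (that is, in decreasing $\beta$) down to $\beta=0$ along the rules of the pruned LD-ASG; I claim the ancestral line of the individual at level $1$ occupies, at time $\beta$, the lowest level of type $0$, and occupies the immune line if every line present at time $\beta$ is of type $1$. By definition the immune line at time $\beta$ is exactly the ancestral line produced by the all-type-$1$ assignment at $\beta$, so this special case is contained in the claim, and it simultaneously pins down how the immune line evolves.

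I would argue by induction on the number of events of $\Ls_{[0,\tau]}^N$ lying in $(0,\beta]$ — coalescences, branchings, collisions, exchange-collisions and mutations. Before the first event $K_\beta=1$, the unique line is the sampled lineage, it is the immune line, and it carries the lowest type present, which settles the base case. For the step, let $\tau_k$ be the last event with $\tau_k\le\beta$. Since nothing happens on $[\tau_k,\beta)$, the configuration at time $\beta$ agrees with the one on the root side of $\tau_k$; transporting it across $\tau_k$ by the type-propagation rule of that event yields a configuration on the sample side $\tau_k^-$, to which the induction hypothesis applies. It then remains to carry the resulting ancestral line back across $\tau_k$ — relocation to the tail at a neutral arrow, following the incoming branch at a selective arrow exactly when that branch is of type $0$, and the prescribed pruning moves at a mutation — and to verify that one lands again on the lowest type-$0$ level, or on the immune line, at time $\beta$.

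For the non-mutational events, the verification rests on two structural features of the LD reordering: a selective arrow always runs from a lower to a higher level (the incoming branch lies below the continuing branch), and the propagation rule across a selective arrow makes the continuing-branch lineage type $0$ on the sample side precisely when the incoming or the continuing branch is already of type $0$. Together with the explicit level-shifts attached to coalescences, branchings, collisions and exchange-collisions, these imply that the lowest type-$0$ level on the sample side of $\tau_k$ is carried by the event onto the lowest type-$0$ level on its root side, and likewise for the immune line, with the relabellings of the surviving lines matching exactly. Of the mutations, one to type $0$ on the immune line changes nothing in the construction and nothing in the statement; one to type $1$ on a non-immune level $i$ kills that line and shifts the lines above it downwards, and since that line is of type $1$ on the sample side it is not the ancestral line there, so the statement survives a mere relabelling.

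The main obstacle is the two pruning-intensive mutations: one to type $1$ on the immune line, which relocates it, and a beneficial mutation on a non-immune level $i$, which installs a barrier up to level $N$ and deletes every line above level $i$. Three points have to be settled. First, the ancestral line is never among the deleted lines: by the induction hypothesis it sits, on the sample side of $\tau_k$, at the lowest type-$0$ level, which cannot exceed $i$ because level $i$ itself becomes type $0$ after the beneficial mutation, so it survives. Secondly, the lines created by the barrier on the sample side must be declared of type $1$ — the only choice compatible with the construction and with the fact that a line lying above a guaranteed type-$0$ line can never be ancestral. Thirdly, the evolution of the immune line across these events (its relocation, cf. Figure~\ref{fig10}) is recovered by applying the induction hypothesis to the all-type-$1$ configuration at $\beta$: after transport across $\tau_k$ this configuration has level $i$ of type $0$ and nothing lower, which forces the immune line on the root side to be line $i$ (respectively, forces it to the topmost occupied level). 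With every case dispatched the induction closes, and the ``in particular'' statement is immediate: the ancestral line at $\tau$ is of type $1$ exactly when it does not occupy a type-$0$ level, i.e. exactly when no line present at time $\tau$ is of type $0$.
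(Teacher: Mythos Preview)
Your proof is correct and follows essentially the same approach as the paper's: both argue by induction over the successive event times of the pruned LD-ASG, tracking the lowest type-$0$ level across each coalescence, branching, collision, exchange-collision and mutation, and both identify the immune line with the all-type-$1$ outcome. Your version packages the induction as a statement valid at every backward time $\beta$ and for every type assignment there, and is more explicit about the barrier case (the dangling lines above the beneficial-mutation level, which you rightly observe are irrelevant since level $i$ is already type $0$ on the sample side), whereas the paper formulates the step as ``the lowest type-$0$ level in $(\sigma_k,\sigma_{k+1})$ is a descendant of $(i_*,\tau)$'' and iterates it forward from $\tau$ to $0$; the underlying case analysis is the same.
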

\begin{proof}
If all the lines in $\Ls_{[0,\tau]}^{N}$ are of type $1$ at time $\tau$, the level of the ancestral line is by definition the level of the immune line. Now we assume that at least one of the lines in $\Ls_{[0,\tau]}^{N}$ is of type $0$ at time $\tau$, and we denote by $i_*$ the lowest type-$0$ level. We have to show that $i_*$ is also the level of the ancestral line at time $\tau$.\\
Let $\sigma_1>0$ be the first branching time and $\sigma_1<\sigma_2<\cdots<\sigma_m<\sigma$ the consecutive coalescence, branching, collision, exchange-collision and mutation times in $\Ls_{[0,\tau]}^{N}$. We set $\sigma_{m+1}:=\tau$. Along the proof, $(i,\beta)\in[N]\times[0,\tau]$ represents the individual at level $i$ at time $\beta$ in $\Ls_{[0,\tau]}^{N}$. If there is at least one line of type $0$ in $(\sigma_k,\sigma_{k+1})$, we denote by $i_k$ the level of the lowest type-$0$ level.

We claim that, if $k\in[m]$ is such that there is at least one line of type $0$ in $(\sigma_k,\sigma_{k+1})$, and $(i_k,\tau_k)$ is a descendent of $(i_*,\tau)$, then either $(i_k,\tau_k)$ is over the immune line, mutates to type $1$ and all the lines are of type $1$ in $(\sigma_{k-1},\sigma_k)$, or $(i_{k-1},\tau_{k-1})$ is a descendant of $(i_*,\tau)$. If this is true, then starting with $k=m+1$ and iterating the claim, we deduce that the unique individual in $(0,\sigma_1)$ is a descendant of $(i_*,\tau)$, which proves the result.

We split the proof of the claim depending on the kind of event occurring at time $\tau_k$. If at time $\tau_k$ there is a mutation to type $1$ in one of the lines present in $(\sigma_k,\sigma_{k+1})$, we have three possibilities: (1) the mutation occurs at a level different to $i_k$, then the individual $(i_k,\tau_k)$ is placed at, the maybe different, level $i_{k-1}$  in $(\sigma_{k-1},\sigma_{k})$, (2) the mutation occurs at level $i_k$ and $i_k$ is not the level of the immune line, and the same conclusion holds, and (3) the mutation is at level $i_k$, which is also the level of the immune line, then $i_k$ is the highest occupied level in $(\sigma_k,\sigma_{k+1})$, and therefore in $(\sigma_{k-1},\sigma_{k})$ there are only type-$1$ lines. In the three cases the claim follows. If $\tau_k$ represents a mutation to type $0$, then by construction it is at the highest occupied level in $(\sigma_k,\sigma_{k+1})$, the individual $(i_k,\tau_k)$ remains at the same level in $(\sigma_{k-1},\sigma_{k})$, $i_k=i_{k-1}$ and the claim follows in this case. If $\tau_k$ is a branching, coalescence, collision or exchange-collision time, then by construction $(i_k,\tau_k)$ is the ancestor of $(i_{k-1},\tau_{k-1})$, and the proof of the claim is completed.
\end{proof}
Following a similar procedure as in Section \ref{MASG}, we construct in a Markovian way the pruned LD-ASG together with the level of its immune line. More precisely, we construct a Markov process of the form  $\mathbb{L}^{N}:=(\Ls_\beta^N,\Gamma^{N}_\beta, \Is_\beta^N)_{\beta\geq 0}$, where\\
$\bullet$ $\Ls_\beta^{N}\in[N]$ represents the number of occupied levels at time $\beta$ in the LD-ASG.\\
$\bullet$ $\Gamma^{N}:=\{\Gamma_{i}^{0,N},\Gamma_{i}^{1,N},\Gamma_{i}^{\vartriangle,N},\{\Gamma_{i,j}^{c,N},\Gamma_{i,j}^{e,N},\Gamma_{i,j}^{\blacktriangle,N}\}_{j>i} \}_{i\in[N]}$ is a collection of counting process encoding the mutation, branching, collision, exchange-collision and coalescence events.\\
$\bullet$ $\Is_\beta^{N}\in[N]$ represents the level of the immune line at time $\beta$ in the LD-ASG.\\
The graphical representation of the process $\mathbb{L}^{N}$ is obtained by drawing horizontal lines at all the occupied lines and interpreting the mutation, branching, collision, exchange-collision and coalescence events exactly as in the previous construction of the LD-ASG. Due to the exchangeability of the lines, we can read off the dynamics of the pruned LD-ASG from the configuration of arrows and circles given by the reproduction-mutation process $\Lambda^N:=(\Lambda^N_t)_{t\in\Rb}$. We start with $\Ls_0^N:=1$, $\Gamma_0^N:=0$ and $\Is_0^N:=1$ and we proceed as follows:\\
$\bullet$ if $t=-\beta$ is a jump of the process $\lambda_{i,j}^{\vartriangle,N}$ and $j\leq \Ls_{\beta^-}^N$, we have three options:
\begin{itemize}
 \item[$\star$] if $i>\Ls_{\beta^-}^N$: a branching occurs at level $j$. We set $\Ls_{\beta}^N=\Ls_{\beta^-}^N+1$ and $\Gamma_{j}^{\vartriangle,N}(\beta)=\Gamma_{j}^{\vartriangle,N}(\beta^-)+1$. In addition, if $j\leq\Is_{\beta^-}^N$, we set $\Is_{\beta}^N=\Is_{\beta^-}^N+1$, otherwise we set $\Is_{\beta}^N=\Is_{\beta^-}^N$.
  \item[$\star$] if $i<j$, a collision occurs between $i$ and $j$. We set $\Ls_{\beta}^N=\Ls_{\beta^-}^N$, $\Gamma_{i,j}^{c,N}(\beta)=\Gamma_{i,j}^{c,N}(\beta^-)+1$, and $\Is_{\beta}^N=\Is_{\beta^-}^N$.
  \item[$\star$] if $j<i$, an exchange collision occurs between $i$ and $j$. We set $\Ls_{\beta}^N=\Ls_{\beta^-}^N$ and $\Gamma_{j,i}^{e,N}(\beta)=\Gamma_{j,i}^{e,N}(\beta^-)+1$. If $\Is_{\beta^-}^N=i$, we set $\Is_{\beta}^N=j$. If $\Is_{\beta^-}^N=j$, then $\Is_{\beta}^N=j+1$. Otherwise, we set $\Is_{\beta}^N=\Is_{\beta^-}^N$.
\end{itemize}
$\bullet$ If $t=-\beta$ is a jump of $\lambda_{i,j}^{\blacktriangle,N}$ and $i\vee j\leq \Ls_{\beta^-}^N$: a coalescence occurs between the levels $i$ and $j$. Denoting, $k=i\wedge j$ and $\ell=i\vee j$, we set $\Ls_{\beta}^N=\Ls_{\beta^-}^N-1$ and  $\Gamma_{k,\ell}^{\blacktriangle,N}(\beta)=\Gamma_{k,\ell}^{\blacktriangle,N}(\beta^-)+1$. If $\Is_{\beta^-}^N>\ell$, then $\Is_{\beta}^N=\Is_{\beta^-}^N-1$. If $\Is_{\beta^-}^N=\ell$, then $\Is_{\beta}^N=k$. Otherwise, we set $\Is_{\beta}^N=\Is_{\beta^-}^N$.\\
$\bullet$ if $t=-\beta$ is a jump of $\lambda_{i}^{0,N}$ and $i\leq \Ls_{\beta^-}^N$: a mutation to type $0$ occurs at level $i$. We set $\Gamma_{k,\ell}^{0,N}(\beta)=\Gamma_{k,\ell}^{0,N}(\beta^-)+1$. If $\Is_{\beta^-}^N=i$, then $\Is_{\beta}^N=\Is_{\beta^-}^N$ and $\Ls_{\beta}^N=\Ls_{\beta^-}^N$. If $\Is_{\beta^-}^N\neq i$, then $\Ls_{\beta}^N=i$, and $\Is_{\beta}^N=i$ if $\Is_{\beta^-}^N>i$ or $\Is_{\beta}^N=\Is_{\beta^-}^N$ otherwise.\\
$\bullet$ if $t=-\beta$ is a jump of $\lambda_{i}^{1,N}$ and $i\leq \Ls_{\beta^-}^N$: a mutation to type $1$ occurs at level $i$. We set $\Gamma_{k,\ell}^{1,N}(\beta)=\Gamma_{k,\ell}^{1,N}(\beta^-)+1$. If $\Is_{\beta^-}^N=i$, then $\Is_{\beta}^N=L_{\beta^-}^N$ and $\Ls_{\beta}^N=\Ls_{\beta^-}^N$. If $\Is_{\beta^-}^N\neq i$, then $\Ls_{\beta}^N=\Ls_{\beta^-}^N-1$ and $\Is_{\beta}^N=\Is_{\beta^-}^N-1$ if $\Is_{\beta^-}^N>i$ or $\Is_{\beta}^N=\Is_{\beta^-}^N$ otherwise.\\
$\bullet$ other jumps in $\Lambda^N$ are ignored.

From construction, the line-counting process $\Ls^N=(\Ls_\beta^N)_{\beta\geq 0}$ is a continuous-time Markov chain with state space $[N]$ and transition rates given by
\begin{equation}\label{rqN}
 q_{\Ls^N}(i,j):=\left\{ \begin{array}{ll}
                             i\,(N-i)\,\,N^{-1}\,s& \textrm{if $j=i+1$},\\
       
                             i(i-1)N^{-1} +(i-1) \,u\,\nu_1+u\nu_0& \textrm{if $j=i-1$},\\
                   
                             u\nu_0& \textrm{if $j\in\{1,\dots,i-2\}$},\\
                             
                             i\left((N-i)s+(i-1)\right)N^{-1}+(i-1)\,u& \textrm{if $j=i$},\\
                             0 & \textrm{otherwise},
                         \end{array}\right.
\end{equation}
for $i,j\in[N]$. Since $\Ls^N$ is irreducible and its state space is finite, it has a unique stationary distribution, denoted by $\rho^N$. Let $\Ls_\infty^N$ be a random variable distributed as $\rho^N$. The next result gives a new graphical interpretation to the expression \eqref{hkak}.
\begin{proposition}\label{sran}
We have 
$$h_k^N:=\frac{k}{N}\sum\limits_{n=0}^{N-k}P(\Ls_\infty^{N}>n) \prod\limits_{j=0}^{n-1}\frac{N-k-j}{N-1-j}.$$
In addition, if we define, for each $n\in[N]_0$, $a_n^N:=P(\Ls_\infty^{N}>n)$, then $(a_n^N)_{n=0}^N$ satisfies \eqref{an} for all $n\in[N]_0$.
\end{proposition}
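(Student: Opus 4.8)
For the first identity the plan is to run the argument of the mutation-free subsection (and of Lemma~\ref{rasghk}) with the pruned LD-ASG in place of the ASG. Fix $\tau>0$ and, under $P_k$, assign the types $J^N$ at backward time $\beta=\tau$ (uniformly among the configurations with exactly $k$ zeros, independently of $\hat\Lambda^N$, hence of the whole lookdown graph). By Proposition~\ref{tal} the ancestral line of $\Ls_{[0,\tau]}^N$ is of type $0$ at time $\tau$ exactly when at least one of its $\Ls_\tau^N$ occupied levels carries a $0$; since $\Ls_{[0,\tau]}^N$ records the ancestral line of $\As_{[0,\tau]}^{\{1\},N}$, Lemma~\ref{lasg2} then gives
$$h_k^N=\lim_{\tau\to\infty}P_k\bigl(\text{some level of }\Ls_{[0,\tau]}^N\text{ is of type }0\text{ at }\beta=\tau\bigr).$$
The set of occupied levels is a function of $\hat\Lambda^N$ alone, hence independent of $J^N$; conditioning on $\Ls_\tau^N$ and using exchangeability of $J^N$ (the probability that a prescribed set of $\ell$ levels gets no $0$ is $\binom{N-\ell}{k}/\binom{N}{k}$) gives
$$P_k\bigl(\text{some level of }\Ls_{[0,\tau]}^N\text{ is of type }0\bigr)=\sum_{\ell=1}^N P(\Ls_\tau^N=\ell)\,\frac{\binom{N}{k}-\binom{N-\ell}{k}}{\binom{N}{k}}.$$
As $\Ls^N$ is an irreducible continuous-time Markov chain on the finite set $[N]$, $\Ls_\tau^N\xrightarrow{(d)}\Ls_\infty^N$, and letting $\tau\to\infty$ in this finite sum replaces $\Ls_\tau^N$ by $\Ls_\infty^N$. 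The outcome is exactly the expression met at the beginning of Section~\ref{wma}, and the same binomial rearrangement carried out there (interchanging the two summations) turns it into $\frac{k}{N}\sum_{n=0}^{N-k}P(\Ls_\infty^N>n)\prod_{j=0}^{n-1}\frac{N-k-j}{N-1-j}$, which is the claimed formula.

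For the recursion I set $a_n^N:=P(\Ls_\infty^N>n)$ and write $\rho^N$ for the stationary law of $\Ls^N$, so $\rho^N(n)=a_{n-1}^N-a_n^N$ and $a_N^N=0$. Equation \eqref{a0} is immediate from $\Ls_\infty^N\in[N]$; equation \eqref{a1} follows by putting $k=N-1$ in the formula just proved, which yields $h_{N-1}^N=\frac{N-1}{N}+\frac{a_1^N}{N}$, i.e. $a_1^N=1-N(1-h_{N-1}^N)$. For the remaining equations the plan is to balance, in stationarity, the probability flux of $\Ls^N$ across the partition $\{1,\dots,n\}\mid\{n+1,\dots,N\}$. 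From \eqref{rqN}, $\Ls^N$ can only leave $\{1,\dots,n\}$ upwards via the branching jump $n\to n+1$ (rate $\tfrac{n(N-n)s}{N}$), and it can only enter $\{1,\dots,n\}$ either from $n+1$ (combining $n+1\to n$ with the $u\nu_0$-jumps $n+1\to 1,\dots,n+1\to n-1$, total rate $\tfrac{n(n+1)}{N}+nu$) or from a state $m\ge n+2$ through an $u\nu_0$-jump landing in $\{1,\dots,n\}$ (total rate $nu\nu_0$). Equating fluxes and dividing by $n$ gives, for $1\le n\le N-1$,
$$(a_{n-1}^N-a_n^N)\,\frac{(N-n)s}{N}=(a_n^N-a_{n+1}^N)\Bigl(\frac{n+1}{N}+u\Bigr)+u\nu_0\,a_{n+1}^N,$$
and a one-line rearrangement, using $u-u\nu_0=u\nu_1$, rewrites this as $\bigl(\tfrac{n+1}{N}+u\nu_1\bigr)a_{n+1}^N=\bigl(\tfrac{n+1}{N}+\tfrac{N-n}{N}s+u\bigr)a_n^N-\tfrac{N-n}{N}s\,a_{n-1}^N$; for $n+1\in[N-1]_2$ this is precisely \eqref{an}, and for $n=N-1$, since $a_N^N=0$, it reproduces the ``missing equation'' \eqref{me} of Lemma~\ref{lme}. (One can also bypass the flux computation entirely: the first part shows that $a_n^N=P(\Ls_\infty^N>n)$ satisfies \eqref{hkak}, Lemma~\ref{invf} shows such coefficients are uniquely determined by $(h_k^N)_k$, hence they coincide with the Fearnhead-type coefficients of \cite{KHB13}, which satisfy \eqref{an} by construction; the flux argument is the more informative of the two.)

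The step I expect to be the real obstacle is the very first identity, $h_k^N=\lim_\tau P_k(\text{some level of }\Ls_{[0,\tau]}^N\text{ is type }0)$. This is not purely formal: it relies on the lookdown reordering and the pruning rules leaving the \emph{type} of the ancestral line untouched — i.e. on no path discarded by the pruning ever being ancestral for some configuration $J$ — and on Proposition~\ref{tal} correctly locating the post-pruning ancestral level; this is precisely where the classification of paths of Section~\ref{cp}, transported to the LD-ASG, must be used. The second part is comparatively routine: a finite cut-balance computation whose only pitfall is to remember the long $u\nu_0$-jumps from states $m\ge n+2$ into $\{1,\dots,n\}$ — exactly the term absent in the mutation-free model, which is why here $\rho^N$ is not reversible and one must use a global cut rather than detailed balance.
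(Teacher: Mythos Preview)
Your proof is correct. The first half---deriving the representation of $h_k^N$ from Proposition~\ref{tal}, Lemma~\ref{lasg2}, conditioning on $\Ls_\tau^N$ and letting $\tau\to\infty$ by ergodicity---is exactly what the paper does (the paper compresses it to ``follows from Proposition~\ref{tal} using similar arguments as in Lemma~\ref{rasghk}''). Your worry about the ``real obstacle'' is precisely what Proposition~\ref{tal} is for, and both you and the paper invoke it in the same way.

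For the recursion your route differs from the paper's. The paper writes out the $n$-th coordinate of $\rho^N Q_{\Ls^N}=0$ for each $n$, rearranges it to the telescoping identity \eqref{tpk}, anchors the telescope with the first coordinate equation \eqref{tp1}, and concludes that each bracket vanishes, which is \eqref{an}; \eqref{me} is read off separately from the $N$-th coordinate equation. You instead apply the cut-balance principle across $\{1,\dots,n\}\mid\{n+1,\dots,N\}$ and obtain \eqref{an} (and \eqref{me} at $n=N-1$) in one stroke. The two are of course equivalent---your flux identity is what one gets by summing the paper's coordinate equations over $\{1,\dots,n\}$---but your presentation is shorter, avoids the telescoping step, and makes the role of the long $u\nu_0$-jumps transparent. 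The paper's version, on the other hand, stays closer to the raw generator and perhaps better highlights the parallel with the Fearnhead recursion in the diffusion limit. Either way, the content is the same.
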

\begin{proof}
The first part of the statement follows from Proposition \ref{tal} using similar arguments as in Lemma \ref{rasghk}. 

From definition $P(\Ls_\infty^N>0)=1$ and hence \eqref{a0} is satisfied. Using the first statement for $k=N-1$ we deduce that \eqref{a1} holds. Furthermore, since $\rho^N$ is the stationary distribution of $\Ls^N$, we have $\rho^N Q_{\Ls^N}=0$, where $Q_{\Ls^N}$ is the generator of $\Ls^N$.
In particular, for $n\in[N-2]_2$, the $n$-th cequation in $\rho^N Q_{\Ls^N}=0$ reads
\begin{align*}
\rho_{n-1}^N&\left[\frac{(n-1)(N-n+1)}{N}\,s\right]+\rho_{n+1}^N \left[\frac{n(n+1)}{N}+nu\nu_1+u\nu_0\right]+u\nu_0\sum_{j=n+2}^{N} \rho_{j}^N\\
&= \rho_{n}^N\left[\frac{n(n-1)}{N}+(n-1)u\nu_1+u\nu_0+\frac{n(N-n)}{N}\,s+u\nu_0(n-2)\right].
\end{align*}
Reordering the terms we obtain
\begin{align}\label{tpk}
(n-1)&\left(\rho_{n-1}^N\left[\frac{N-n+1}{N}\,s\right]-u\nu_0 a_{n}^N-\rho_{n}^N \left[\frac{n}{N}+u\right]\right)\nonumber\\
&=n\left(\rho_{n}^N\left[\frac{N-n}{N}\,s\right]-u\nu_0 a_{n+1}^N-\rho_{n+1}^N \left[\frac{n+1}{N}+u\right]\right).
\end{align}
In a similar way, we derive from the first equation in $\rho^N Q_{\Ls^N}=0$ that
\begin{equation}\label{tp1}
\rho_{1}^N\left[\frac{N-1}{N}\,s\right]-u\nu_0 a_{2}^N-\rho_{2}^N \left[\frac{2}{N}+u\right]=0.
\end{equation}
Using, \eqref{tpk} and \eqref{tp1}, we deduce, for all $n\in[N-2]$, that
\begin{equation*}
 \rho_{n}^N\left[\frac{N-n}{N}\,s\right]-u\nu_0 a_{n+1}^N-\rho_{n+1}^N \left[\frac{n+1}{N}+u\right]=0.
\end{equation*}
It is straightforward to see that the previous equation is equivalent to \eqref{an}. It remains only to prove that the missing equation \eqref{me} holds. The latter is easily obtained from the last equation in $\rho^N Q_{\Ls^N}=0$, which is given by 
$$\rho_{N-1}^N\frac{s}{N}=\rho_N^N(1+u).$$
\end{proof}

\section{The asymptotic common ancestor type distribution and the asymptotic pruned LD-ASG}\label{s6}
In this section we aim to give a probabilistic interpretation to the function $h$ appearing in Theorem \ref{chk}. To do so, we construct, in the setting of the deterministic limit, an asymptotic version of the pruned LD-ASG.

Let us first study the asymptotic behaviour of $\Ls^N$. It is straightforward to see that, for any $i,j\in[N]_0$, 
$q_{\Ls^N}(i,j)\xrightarrow[N\rightarrow\infty]{}q_{\Ls}(i,j),$
where $q_{\Ls}$ is defined by
\begin{equation}\label{rq}
 q_{\Ls}(i,j):=\left\{ \begin{array}{ll}
                             i\,s& \textrm{if $j=i+1$},\\
                      
                             (i-1) \,u\,\nu_1+u\nu_0& \textrm{if $j=i-1$},\\
                           
                             u\nu_0& \textrm{if $j\in\{1,\dots,n-2\}$},\\
                             u-(s+u)i  & \textrm{if $j=i$},\\
                             0 & \textrm{otherwise}.
                         \end{array}\right.
\end{equation}
Let $\Ls=(\Ls_\beta)_{\beta\geq 0}$ be the continuous-time Markov chain corresponding to the transition rates given in \eqref{rq} starting at $\Ls_0:=1$. We refer to $\Ls$ as the asymptotic line-counting process.
\begin{lemma}\label{nexp}
 The process $\Ls$ is non-explosive.
\end{lemma}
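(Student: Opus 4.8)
The plan is to show that $\Ls$ is dominated pathwise by a pure birth (Yule-type) process and then invoke the classical non-explosion of the latter. The key structural feature of the rates \eqref{rq} is that the only upward transition from a state $i$ is $i\mapsto i+1$, occurring at rate $q_\Ls(i,i+1)=is$, while every other transition strictly \emph{decreases} the state. Consequently, if $Y=(Y_\beta)_{\beta\ge 0}$ denotes the pure birth process on $\Nb$ started at $Y_0:=1$ with birth rate $is$ in state $i$, one should be able to couple $\Ls$ and $Y$ so that $\Ls_\beta\le Y_\beta$ for every $\beta\ge 0$.

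To realise the coupling I would use a Poisson (graphical) construction: let $\Pi$ be a Poisson point process on $[0,\infty)\times(0,\infty)$ with intensity $s\,d\beta\otimes dk$; decree that $Y$ performs an up-jump at a time $\beta$ with $(\beta,k)\in\Pi$ whenever $k\le Y_{\beta^-}$, and that $\Ls$ performs an up-jump at such a time whenever $k\le\Ls_{\beta^-}$; drive the downward transitions of $\Ls$ by an independent family of clocks (between two successive up-jumps $\Ls$ can only perform finitely many down-jumps, since it is $\Nb$-valued and strictly decreasing there). Since $\Ls_0=Y_0=1$ and an up-jump of $\Ls$ triggered by a point $(\beta,k)$ with $k\le\Ls_{\beta^-}\le Y_{\beta^-}$ is automatically an up-jump of $Y$, while down-jumps of $\Ls$ and up-jumps of $Y$ only help to preserve the inequality, a routine induction over the (locally finite) jump times yields $\Ls_\beta\le Y_\beta$ for all $\beta$ below the explosion time of $Y$.

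It then suffices to recall that $Y$ is non-explosive. Its explosion time is $\zeta=\sum_{i\ge 1}E_i$ with independent $E_i\sim\mathrm{Exp}(is)$, and since $\mathbb E[E_i\wedge 1]=(1-e^{-is})/(is)\sim 1/(is)$ we have $\sum_i\mathbb E[E_i\wedge 1]=\infty$; hence $\sum_i(E_i\wedge 1)=\infty$ a.s.\ by the Kolmogorov three-series theorem, and a fortiori $\zeta=+\infty$ a.s.\ (equivalently, $\sum_{i\ge 1}(is)^{-1}=\infty$ is the classical non-explosion criterion for birth processes). Combining this with the coupling, $\Ls_\beta\le Y_\beta<\infty$ for every $\beta\ge 0$, so $\Ls$ cannot reach $\infty$ in finite time, i.e.\ it is non-explosive.

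An alternative, slightly shorter route avoids the coupling and uses a Foster--Lyapunov argument: taking $f(i):=i$, which has finite sublevel sets on $\Nb$, one computes $q_\Ls f(i)=is-\bigl[(i-1)u\nu_1+u\nu_0\bigr]-u\nu_0\sum_{j=1}^{i-2}(i-j)\le s\,i=s\,f(i)$, so that $e^{-s\beta}f(\Ls_\beta)$ is a non-negative supermartingale up to the explosion time, and the standard non-explosion criterion for jump Markov chains then gives the claim. I would include whichever version fits the surrounding exposition best; in either case the only mildly delicate point — and the one I would write out carefully — is the pathwise bookkeeping (the validity of the coupling, respectively the argument that the supermartingale property forbids an accumulation of jumps), everything else being routine.
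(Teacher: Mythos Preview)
Your proposal is correct. Both the coupling with the Yule process and the Foster--Lyapunov argument are valid and fully justified as you describe them.

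The paper, however, takes a shorter and more direct route. It invokes the standard criterion that a jump chain is non-explosive iff $\sum_{k\ge 0} 1/q_\Ls(\Ls_{S_k})=\infty$ a.s., where $q_\Ls(n)=-q_\Ls(n,n)=(s+u)n-u$ is the total jump rate and $(S_k)$ are the successive jump times. The single observation needed is that each jump increases the state by at most one, so $\Ls_{S_k}\le k+2$; plugging this into the sum gives a harmonic-type lower bound and hence divergence. This is essentially the same structural fact you exploit (linear birth rate, up-jumps of size one only), but used in a one-line estimate rather than through a coupling or a drift condition. Your arguments buy a bit more: the coupling gives a pathwise bound $\Ls_\beta\le Y_\beta$ that could be reused elsewhere, and the Lyapunov bound $q_\Ls f\le sf$ yields moment control. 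The paper's version buys brevity and avoids constructing any auxiliary process.
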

\begin{proof}
From \cite[Proposition 8.7.2]{To92} the non-explosive condition is equivalent to
 $$\sum\limits_{k=0}^\infty \frac1{q_\Ls(\Ls_{S_k})}=\infty\quad \textrm{a.s.},$$
 where $S_0<S_1<\cdots$ denote the jump times of $\Ls$ and $q_\Ls(n):=-q_\Ls(n,n)$. In addition, we have $\Ls_{S_k}\leq k+2$. Therefore,
  $$\sum\limits_{k=0}^\infty \frac1{q_\Ls(\Ls_{S_k})}=\sum\limits_{k=0}^\infty \frac1{(u+s)\Ls_{S_k}-u}\geq \sum\limits_{k=0}^\infty \frac1{(u+s)(k+2)-u}=\infty\quad \textrm{a.s.}.$$
  This concludes the proof.
\end{proof}
The next proposition formalises the convergence of $\Ls^N$ to $\Ls$.
\begin{proposition}\label{calado}
The sequence of line-counting processes $(\Ls^N)_{N\geq 1}$ converges in distribution to the line-counting process $\Ls$. 
\end{proposition}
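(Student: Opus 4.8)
The plan is to realise $\Ls^N$ and $\Ls$ on one probability space in such a way that, with probability tending to one, the two processes coincide throughout a fixed time interval, and then to read off convergence in distribution in the Skorohod space. The key input is Lemma \ref{nexp}: since $\Ls$ is non-explosive and starts at $\Ls_0=1$, for every $T>0$ one has $\sup_{\beta\le T}\Ls_\beta<\infty$ almost surely, and hence $P(\tau_m\le T)\to 0$ as $m\to\infty$, where $\tau_m:=\inf\{\beta\ge 0:\Ls_\beta>m\}$.

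Fix $T>0$, $m\in\Nb$ and $N>m$. Only finitely many entries of $q_{\Ls^N}$ and of $q_{\Ls}$ govern the transitions out of states in $\{1,\dots,m\}$, and by the pointwise convergence $q_{\Ls^N}(i,j)\to q_{\Ls}(i,j)$ recorded just before Lemma \ref{nexp} (together with the ensuing $q_{\Ls^N}(i):=-q_{\Ls^N}(i,i)\to q_{\Ls}(i):=-q_{\Ls}(i,i)$) all these finitely many quantities converge. I would pick $\lambda_m$ dominating $q_{\Ls^N}(i)$ and $q_{\Ls}(i)$ for all $i\le m$ and all large $N$, and uniformise both chains at rate $\lambda_m$ as long as they sit in $\{1,\dots,m\}$: run a single Poisson process of rate $\lambda_m$ with points $T_1<T_2<\cdots$, and at each $T_k$ let a chain currently at state $i\le m$ move to $j$ with probability $q_\bullet(i,j)/\lambda_m$ and stay with the complementary probability, the decision being read from one common uniform variable $U_k$ via the quantile transform; a chain that leaves $\{1,\dots,m\}$ is then allowed to run by its own independent mechanism. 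Both chains start at $1$, so they agree at time $0$; and if at a Poisson point $T_k$ they sit at the same state $i\le m$, the probability that they move differently equals the total variation distance $\delta_N(i)$ between the two uniformised one-step kernels at $i$, so that $\delta_N:=\max_{i\le m}\delta_N(i)\to 0$ as $N\to\infty$, being a maximum of finitely many Scheff\'{e}-type convergences.

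Let $\beta^{*}$ be the first time the two processes differ. On the event $\{\beta^{*}\le T\}\cap\{\tau_m>T\}$ both chains stay in $\{1,\dots,m\}$ on $[0,\beta^{*})$ and $\beta^{*}$ is necessarily one of the Poisson points, with both chains at a common state $\le m$ just before it; conditioning successively on what happened before each Poisson point therefore yields
$$P\big(\exists\,\beta\le T:\ \Ls_\beta^{N}\ne\Ls_\beta\big)\ \le\ P(\tau_m\le T)+\sum_{k\ge 1}\delta_N\,P(T_k\le T)\ =\ P(\tau_m\le T)+\lambda_m\,T\,\delta_N.$$
Given $\varepsilon>0$, I would first choose $m$ with $P(\tau_m\le T)<\varepsilon/2$, and then $N$ so large that $\lambda_m T\delta_N<\varepsilon/2$. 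Under this coupling $\sup_{\beta\le T}1_{\{\Ls_\beta^{N}\ne\Ls_\beta\}}\to 0$ in probability for every $T>0$; hence $\Ls^{N}\to\Ls$ locally uniformly in probability (in particular for the Skorohod topology on $D_{[0,\infty)}$), and a fortiori in distribution.

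The step I expect to be the real obstacle is the localisation. The generators here are unbounded and the state spaces of $\Ls^{N}$ and $\Ls$ do not coincide, so no semigroup convergence theorem applies verbatim; it is precisely Lemma \ref{nexp}, i.e. non-explosiveness of the limit, that permits cutting the processes off at level $m$, produces the vanishing term $P(\tau_m\le T)$, and thereby reduces the whole problem to comparing two chains with uniformly close, bounded generators on the finite window $\{1,\dots,m\}$. Alternatively, one could cast the argument in terms of martingale problems, with $\Ls$ stopped at level $m$ serving as the localising sequence and well-posedness of the limiting martingale problem coming from non-explosiveness, in the spirit of \cite{Ku86}; the coupling above just makes the same mechanism explicit.
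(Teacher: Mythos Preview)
Your argument is correct and takes a genuinely different route from the paper. The paper works directly in the Skorohod metric $(\Di,d_\infty^\circ)$: for a uniformly continuous bounded $F$ it splits $|E[F(\Ls^N)]-E[F(\Ls)]|$ into three pieces via the processes stopped at the hitting time $T_k$ of level $k$, invokes the Ethier--Kurtz generator-convergence theorem (\cite[Theorems 1.6.1 and 4.2.11]{Ku86}) on the stopped, finite-state chains for the middle piece, and controls the two truncation errors through the explicit estimate $d_\infty^\circ(w,w(\cdot\wedge t))\le 2^{-\lfloor t\rfloor}$ of Lemma~\ref{trunc} together with non-explosion. You instead build an explicit uniformisation coupling on the window $\{1,\dots,m\}$ and obtain
\[
P\big(\exists\,\beta\le T:\ \Ls_\beta^{N}\ne\Ls_\beta\big)\le P(\tau_m\le T)+\lambda_m T\,\delta_N,
\]
which yields locally uniform convergence in probability---a strictly stronger conclusion than weak convergence, reached with more elementary tools (no semigroup machinery, no metric computations in $\Di$). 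Both proofs rest on exactly the same localisation: Lemma~\ref{nexp} is what makes $P(\tau_m\le T)$ in your argument, respectively $P(T_k(\Ls)\le n_F(\varepsilon))$ in the paper's, vanish as the cutoff level grows. The paper's version is closer to a citable template; yours makes the mechanism fully explicit and would transfer more readily to settings where one wants quantitative coupling bounds rather than bare weak convergence.
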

\begin{proof}
We denote by $(\Di,d_\infty^\circ)$ the space of c\`{a}dl\`{a}g functions on $[0,\infty)$ with values on $\Nb$ equiped with the metric $d_\infty^\circ$ defined in Appendix \ref{A1}, i.e. with the Skorohod topology.
 We have to show that, for all uniformly continuous and bounded function $F:\Di\rightarrow\Rb$, $\lim_{N\rightarrow\infty}E[F(\Ls^N)]=E[F(\Ls)]$. Let $F$ be such a function and fix $k\in\Nb$. Note that for every $N\geq k$
 \begin{align}\label{splite}
\left|E\left[F(\Ls^N)\right]-E\left[F(\Ls)\right]\right|&\leq E\left[\left|F(\Ls^N)-F\left(\Ls_{\cdot\wedge T_k(\Ls^N)}^N\right)\right|\right]\nonumber\\
 &+\left|E\left[F\left(\Ls_{\cdot\wedge T_k(\Ls^N)}\right)\right]-E\left[F\left(\Ls_{\cdot\wedge T_k(\Ls)}\right)\right]\right|\nonumber\\
 &+E\left[\left|F\left(\Ls_{\cdot\wedge T_k(\Ls)}\right)-F(\Ls)\right|\right],
 \end{align}
 where $T_k$ is the function defined in Appendix \ref{A1}. Note that $T_k(\Ls^N)$ and $T_k(\Ls)$ are a.s. finite. We denote by $E_{N,k}^1$, $E_{N,k}^2$ and $E_{k}^3$ respectively the first, second and third term on the right-hand side of \eqref{splite}.
 
 The processes $\Ls^{N}(k):=(\Ls_{t\wedge T_k(\Ls^N)}^N)_{t\geq 0}$ and $\Ls(k):=(\Ls_{t\wedge T_k(\Ls)})_{t\geq 0}$ are continuous-time Markov chains with space state $[k]$. Moreover, it is straightforward to see that the transition rates of  $\Ls^{k,N}$ converge to the transition rates of $\Ls^k$. This in turn implies, for every real-valued, bounded function on $[k]$, that
$$\lim\limits_{N\rightarrow\infty}\sup\limits_{i\in[k]}|A_{\Ls^N(k)}f(i)-A_{\Ls(k)}f(i)|=0,$$
where $A_{\Ls^N(k)}$ and $A_{\Ls(k)}$ denote the generators of $\Ls^N(k)$ and $\Ls(k)$. From \cite[Theorems 1.6.1 and 4.2.11]{Ku86} we get that $\Ls^N(k)$ converges in distribution to $\Ls(k)$. Thus,
\begin{equation}\label{ek2}
 \lim\limits_{N\rightarrow\infty}E_{N,k}^2=0.
\end{equation}
Fix $\varepsilon\in(0,1)$. Since, $F$ is uniformly continuous, there is $\delta_F(\varepsilon)\in(0,\varepsilon)$ such that
$$d_\infty^\circ(w,w_*)\leq \delta(\varepsilon)\Rightarrow |F(w)-F(w_*)|\leq \varepsilon.$$
As a consequence of this and Lemma \ref{trunc}, we deduce that
\begin{equation}\label{ekn1}
 E_{k,N}^1\leq \varepsilon + 2||F||_\infty P(T_k(\Ls^N)\leq n_F(\varepsilon)),
\end{equation}
where $n_F(\varepsilon):=\lfloor\log_2(\frac{1}{\delta(\varepsilon)})\rfloor+1$. Similarly, we get
\begin{equation}\label{ek3}
 E_{k}^3\leq \varepsilon + 2||F||_\infty P(T_k(\Ls)\leq n_F(\varepsilon)).
\end{equation}
Note that $T_k(\Ls^N)=T_k(\Ls^N(k))$ and $T_k(\Ls)=T_k(\Ls(k))$. Since $\Ls^N(k)$ converges in distribution to $\Ls(k)$ and the function $T_k$ is continuous (see Lemma \ref{ctk}), we deduce from the mapping theorem (see \cite[Theorem 2.7]{Bi99}) that $T_k(\Ls^N)$ converges in distribution to $T_k(\Ls)$. In particular, from \eqref{ekn1}, we get
\begin{equation}\label{ek1}
 \limsup\limits_{N\rightarrow\infty}E_{k,N}^1\leq \varepsilon + 2||F||_\infty P(T_k(\Ls)\leq n_F(\varepsilon)).
\end{equation}
Using \eqref{ek2},\eqref{ek3} and \eqref{ek1}, we obtain
$$ \limsup\limits_{N\rightarrow\infty}\left|E\left[F(\Ls^N)\right]-E\left[F(\Ls)\right]\right|\leq 2\varepsilon +4||F||_\infty P(T_k(\Ls)\leq n_F(\varepsilon)).$$
Lemma \ref{nexp} implies that the last term converge to zero when $k$ tends to $\infty$. Summarizing, we have proven that, for all $\varepsilon\in(0,1)$,
$$\limsup\limits_{N\rightarrow\infty}\left|E\left[F(\Ls^N)\right]-E\left[F(\Ls)\right]\right|\leq 2\varepsilon.$$
The result follows.
\end{proof}

\begin{lemma}\label{lsd}
The process $\Ls$ has a unique stationary distribution, which is given by the geometric distribution of parameter $1-\ell_-$.
\end{lemma}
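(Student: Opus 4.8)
The plan is to guess the stationary distribution from Corollary~\ref{convln} and verify it directly. Set $\rho_n:=(1-\ell_-)\,\ell_-^{\,n-1}$ for $n\in\Nb$; this is a genuine probability distribution on $\Nb$ since $\ell_-\in(0,1)$ by Remark~\ref{ellx}. For uniqueness, observe that the rates \eqref{rq} let $\Ls$ jump from $i$ to $i+1$ at rate $is>0$ and, when $i\geq 2$, from $i$ to $i-1$ at rate at least $u\nu_0>0$; hence $\Ls$ is irreducible on $\Nb$. Combined with Lemma~\ref{nexp} (non-explosiveness), this guarantees that $\Ls$ has at most one stationary distribution, and that any probability vector $\rho$ solving $\rho\,Q_\Ls=0$ (with $Q_\Ls$ the generator of $\Ls$) is stationary; so it suffices to check $\rho\,Q_\Ls=0$ for the candidate.

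Next I would write out $\rho\,Q_\Ls=0$. From \eqref{rq}, the state $m$ is entered only from $m-1$ (rate $(m-1)s$), from $m+1$ (rate $mu\nu_1+u\nu_0$), and from each $n\geq m+2$ (rate $u\nu_0$), while the total exit rate from $m$ equals $(s+u)m-u$. Hence the equations read
\[
\rho_{m-1}(m-1)s+\rho_{m+1}(mu\nu_1+u\nu_0)+u\nu_0\sum\limits_{n\geq m+2}\rho_n=\big((s+u)m-u\big)\rho_m,\qquad m\geq 2,
\]
together with $u\,\rho_2+u\nu_0\sum\limits_{n\geq 3}\rho_n=s\,\rho_1$ for $m=1$. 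For the geometric candidate one has $\sum\limits_{n\geq m+2}\rho_n=\ell_-^{\,m+1}$, so each of these becomes a polynomial identity in $\ell_-$.

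The key input is that, by Lemma~\ref{ane}, $\ell_-$ is a root of $p(x)=u\nu_1x^2-(s+u)x+s$, i.e. $u\nu_1\ell_-^2=(s+u)\ell_--s$; combined with $\nu_0=1-\nu_1$ this yields the auxiliary identity $u\nu_0\ell_-^2=(1-\ell_-)(s-u\ell_-)$. Substituting $\rho_n=(1-\ell_-)\ell_-^{\,n-1}$ into the displayed equation, dividing by $(1-\ell_-)\ell_-^{\,m-2}$, and using $u\nu_1\ell_-^2=(s+u)\ell_--s$ to cancel the terms proportional to $m$, the equation collapses to the auxiliary identity; the case $m=1$ reduces to it in the same way. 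Since $\sum\limits_{n\in\Nb}\rho_n=1$, the vector $\rho$ is a stationary distribution, hence the unique one by the first paragraph, and it is precisely $\textrm{Geom}(1-\ell_-)$.

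I expect the only genuine work to be the bookkeeping in this substitution --- correctly evaluating the tail $\sum\limits_{n\geq m+2}\rho_n$ and isolating the $m$-dependent part so that the quadratic relation can be applied --- rather than any conceptual obstacle. An alternative, computation-free route is to start $\Ls^N$ from its stationary law $\rho^N$: by Propositions~\ref{chan} and~\ref{sran} one has $\Ls_\infty^N\overset{(d)}{=}L_N$, so $\rho^N$ converges weakly to $\textrm{Geom}(1-\ell_-)$ by Corollary~\ref{convln}; the argument of Proposition~\ref{calado} extends to this (tight) sequence of initial laws, giving that $\Ls^N$ started from $\rho^N$ converges in distribution to $\Ls$ started from $\textrm{Geom}(1-\ell_-)$; a weak limit of stationary processes is stationary, so $\textrm{Geom}(1-\ell_-)$ is invariant for $\Ls$, and uniqueness follows as before. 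In this variant the delicate point is the passage of stationarity through the Skorohod limit, which however only requires shift-invariance of the finite-dimensional marginals.
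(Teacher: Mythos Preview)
Your proposal is correct and follows essentially the same approach as the paper: verify directly that the geometric law satisfies $\rho\,Q_{\Ls}=0$, then use irreducibility and non-explosiveness (Lemma~\ref{nexp}) to conclude positive recurrence and hence uniqueness. The paper simply asserts the verification as ``one can easily check'' and invokes Norris's theorems, whereas you spell out the balance equations and the algebraic reduction via the quadratic relation for $\ell_-$; your alternative weak-limit route is a nice observation but not needed.
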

\begin{proof}
If $\rho:=(\rho_k)_{k\geq 1}$ denotes the geometric distribution of parameter $1-\ell_-$, one can easily check that $\rho$ is a stationary distribution for $Q_\Ls$, i.e $\rho\,Q_\Ls=0$. Since $Q_\Ls$ is irreducible and non-explosive, we deduce from \cite[Theorem 3.5.3]{No98} that $Q_\Ls$ is positive recurrent. Therefore, the uniqueness of the stationary distribution follows from \cite[Theorem 3.5.2]{No98}.
\end{proof}
Let $\Ls_\infty^N$ and $\Ls_\infty$ be random variables following the stationary distributions $\Ls^N$ and $\Ls$, respectively. Corollary \ref{convln} can be translated in terms of $\Ls_\infty^N$ and $\Ls_\infty$ as
\begin{equation}\label{convsd2}
 \Ls_\infty^N\xrightarrow[N\rightarrow\infty]{}\Ls_\infty.
\end{equation}
Using this and Proposition \ref{sran}, we recover Theorem \ref{chk} in the following form
$$\frac{k_N}{N}\xrightarrow[N\rightarrow\infty]{}x\in(0,1)\Rightarrow  h_{k_N}^N \xrightarrow[N\rightarrow\infty]{}\sum\limits_{n=0}^\infty P(\Ls_\infty>n)x(1-x)^n=h(x).$$
Now, we aim to construct an asymptotic version of the pruned LD-ASG having $\Ls$ as line-counting process. Note that the coalescence and collision rates in the finite ASG, and the LD-ASG, converge to zero when the population size tends to infinity. In particular, collisions and coalescence events will be absent in any suitable asymptotic version of the pruned LD-ASG. Thus, the notion of common ancestor does not make sense anymore. Nevertheless, we provide a nice interpretation to the function $h(x)$ at the end of this section.

We turn now to the construction of the asymptotic pruned LD-ASG. We point out first that the abscence of coalescence implies that the immune line will be always located at the highest occupied level.

We start with a realisation of $(\Ls_\beta)_{\beta\geq 0}$  started at $\Ls_0=1$. If $\tau_1>0$ is the first jumping time of $\Ls$, then the asymptotic pruned LD-ASG in $[0,\tau_1)$ consists of a single individual placed at level $1$. Moreover, the asymptotic pruned LD-ASG remains constant on intervals of the form $[\tau,\tau_*)$, where $\tau<\tau_*$ are two consecutive jumps of $\Ls$. It remains to describe the evolution at the jumping times of $\Ls$. To do so, assume we have constructed the asymptotic pruned LD-ASG in $[0,\tau)$ and that $\tau$ is a jumping time of $\Ls$
\begin{itemize}
 \item If $\Ls_\tau-\Ls_{\tau^-}=1$, then a branching event occurs. A star appears at a level $i$ chosen between the $\Ls_{\tau^-}$ current lines. The incoming branch emanates from the star and all the lines at levels $k\geq i$ are shifted one level upwards.
 \item If $\Ls_\tau-\Ls_{\tau^-}=-1$ and $\Ls_{\tau^-}=n$, with probability $\frac{u\nu_0}{u\nu_0+(n-1)u\nu_1}$ a mutation to type $0$ occurs at level $n-1$, we insert an infinite vertical barrier starting from level $n-1$ and we kill  all the lines above level $n-1$. If no mutation to type $0$ happens, then a mutation to type $1$ takes place at a level $i<n$ chosen at random, the corresponding line is killed and we shift all the lines above one level downwards.
 \item If $\Ls_\tau-\Ls_{\tau^-}<-1$ and $\Ls_{\tau}=n$, a mutation to type $0$ occurs at level $n$, we insert a vertical barrier from level $n$ till infinity and we kill all the lines above level $n$.
 \end{itemize}
 Now we introduce a new notion which plays the role of the common ancestor in the deteministic limit setting. We start at time $\beta=0$ with a generic individual in the population, and we trace back the type of its ancestor. Furthermore, the type of the ancestor at time $\beta$ of the chosen individual is denoted by $I_\beta^*$ and is called the \textit{representative ancestral type} at time $\beta$. 
 \begin{remark}
 In the finite case, after the first botleneck, the representative ancestral type coincides with the common ancestor type. 
 \end{remark}
\begin{proposition}\label{ptal}
The representative ancestral type is $1$ at time $\beta$ if and only if all the lines in the asymptotic pruned LD-ASG at time $\beta$ are of type 1.
\end{proposition}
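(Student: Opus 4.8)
The plan is to reproduce the argument of Proposition~\ref{tal} in the asymptotic regime, where it becomes considerably shorter: the asymptotic pruned LD-ASG has only branching and mutation events --- no coalescences, collisions or exchange-collisions --- and the immune line is permanently located at the top occupied level. The ``if'' direction is then immediate: if at backward time $\beta$ every line of the asymptotic pruned LD-ASG carries type $1$, then, by the defining property of the immune line, the ancestral line at time $\beta$ is the immune line, whose type is $1$; hence $I_\beta^*=1$.

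For the ``only if'' direction the plan is to argue by contraposition: assuming at least one line has type $0$ at time $\beta$, we show $I_\beta^*=0$ by locating the ancestral line at time $\beta$ on the \emph{lowest} type-$0$ level, which we denote $i_*$. Let $0=\sigma_0<\sigma_1<\cdots<\sigma_m<\sigma_{m+1}=\beta$ enumerate the jump times of the asymptotic pruned LD-ASG on $(0,\beta)$ --- the branchings, the mutations to type $1$, and the mutations to type $0$, the latter including those occurring on the immune line, which do not change $\Ls$ but are genuine events of the graph. On each interval $(\sigma_k,\sigma_{k+1})$ the graph is constant, and whenever a type-$0$ line is present there we write $i_k$ for the lowest such level, so that $i_m=i_*$. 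Following the template of Proposition~\ref{tal}, we would prove by backward induction on $k$ the claim: if a type-$0$ line is present on $(\sigma_k,\sigma_{k+1})$ and the line at level $i_k$ there descends from the line at level $i_*$ on $(\sigma_m,\sigma_{m+1})$, then a type-$0$ line is present on $(\sigma_{k-1},\sigma_k)$ too and the line at level $i_{k-1}$ there also descends from it. The base case $k=m$ is trivial; iterating the claim down to $k=0$, and using that a single line is present on $(0,\sigma_1)$ --- necessarily the ancestral line of the generic individual started at $\beta=0$ --- shows that the ancestral line at time $\beta$ sits at level $i_*$, so $I_\beta^*=0$.

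The inductive step is the usual event-by-event verification according to the nature of the event at $\sigma_k$. At a branching, the backward lineage of the line occupying level $i_{k-1}$ on $(\sigma_{k-1},\sigma_k)$ passes through the line at level $i_k$ on $(\sigma_k,\sigma_{k+1})$ --- possibly after a single level shift, or through the merging of the incoming branch into the continuing one --- and one checks that a type-$0$ line is indeed present on $(\sigma_{k-1},\sigma_k)$ and that the tracked lineage is the one at its lowest type-$0$ level. At a mutation to type $1$, the line carrying the mutation has type $1$ on $(\sigma_{k-1},\sigma_k)$ and is absent from $(\sigma_k,\sigma_{k+1})$, so it plays no role in the tracking apart from the unit downward relabelling of the levels above it, which does not change which line is the lowest type-$0$ one. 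At a mutation to type $0$, the circle sits by construction at the top occupied level of $(\sigma_k,\sigma_{k+1})$ --- it is either the mutation on the immune line or the level from which the barrier emanates --- so it removes no line at level $\leq i_k$, and the line at level $i_k$ continues, at an unchanged level, as the lowest type-$0$ line of $(\sigma_{k-1},\sigma_k)$. In each case the descent relation is inherited, which closes the induction.

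The hard part will be the level bookkeeping through the type-$0$ mutations that create a barrier: one has to check that the barrier, which removes all lines strictly above a given level, can never remove the line that is the current lowest type-$0$ line, and one has to handle correctly the type-$0$ mutations sitting on the immune line, which are invisible to the line-counting process $\Ls$ but nonetheless contribute to the type configuration. Once these points are treated exactly as for the finite pruned LD-ASG in Section~\ref{s5}, the argument goes through.
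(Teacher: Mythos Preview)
Your proposal is correct and follows exactly the approach the paper takes: the paper's own proof consists of the single sentence ``The proof is analogous to the proof of Proposition~\ref{tal},'' and you have spelled out precisely that analogy, correctly exploiting the simplifications of the asymptotic regime (no coalescences or collisions, immune line permanently at the top). Your remark that type-$0$ mutations on the immune line are invisible to $\Ls$ is accurate but in fact harmless for the argument, since the immune line, sitting always at the highest level, is never an incoming branch and hence its type never affects the ancestor determination at a branching.
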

\begin{proof}
The proof is analogous to the proof of Proposition \ref{tal}.
\end{proof}
Let $J_\beta^k\in\{0,1\}$ be the type that is assigned at time $\beta$ to the individual placed at level $k\in\{1,...,L_{\beta}\}$ in the asymptotic pruned LD-ASG. We assume that the asymptotic pruned LD-ASG is constructed under a probability $P_x$ such that the types are assigned in an i.i.d. manner with $P_x(J_k^\beta=0)=x$. The following result provides a probabilistic meaning to the function $h$. 
\begin{corollary}\label{chdl}
For all $x\in(0,\infty)$, $h(x)=\lim\limits_{\beta\rightarrow\infty}P_x(I_\beta^*=0).$
\end{corollary}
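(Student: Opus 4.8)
The plan is to combine Proposition \ref{ptal} with the asymptotic stationarity of the line-counting process $\Ls$ and the independence of the assigned types. By Proposition \ref{ptal}, under $P_x$ we have
$$P_x(I_\beta^*=0)=1-P_x\left(J_\beta^k=1 \text{ for all } k\in[\Ls_\beta]\right).$$
Conditioning on $\Ls_\beta$ and using that the types $(J_\beta^k)_{k\geq 1}$ are i.i.d. with $P_x(J_\beta^k=0)=x$, independently of the graph, this equals
$$P_x(I_\beta^*=0)=1-E_x\left[(1-x)^{\Ls_\beta}\right]=\sum_{n\geq 1}P(\Ls_\beta=n)\,\bigl(1-(1-x)^n\bigr).$$
The first step, then, is to justify this identity carefully, i.e. to check that the event ``all lines of type $1$'' has the stated conditional probability; this is where Proposition \ref{ptal} enters and the rest is the i.i.d. assignment, which is immediate from the definition of $P_x$.

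The second step is to pass to the limit $\beta\to\infty$. By Lemma \ref{lsd}, $\Ls$ has a unique stationary distribution $\rho$, the geometric law of parameter $1-\ell_-$, and since $Q_\Ls$ is irreducible, non-explosive (Lemma \ref{nexp}) and positive recurrent (as shown in the proof of Lemma \ref{lsd}), the distribution of $\Ls_\beta$ converges to $\rho$ as $\beta\to\infty$, regardless of the fixed starting point $\Ls_0=1$. Writing $\Ls_\infty\sim\rho$, one has $P(\Ls_\infty>n)=\ell_-^n$ for $n\geq 0$, so that
$$\sum_{n\geq 1}\rho_n\bigl(1-(1-x)^n\bigr)=\sum_{n\geq 0}P(\Ls_\infty>n)\,x(1-x)^n=\frac{x}{1-\ell_-(1-x)}=h(x),$$
the last two equalities being exactly the computation in Theorem \ref{chk}. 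Since $x\mapsto 1-(1-x)^n$ is bounded by $1$ uniformly in $n$ for $x\in(0,1)$ (and, for the stated range $x\in(0,\infty)$, one works on $(0,1)$, the meaningful range for a proportion of types), the convergence $\Ls_\beta\xrightarrow{(d)}\Ls_\infty$ together with bounded convergence gives $\lim_{\beta\to\infty}P_x(I_\beta^*=0)=h(x)$.

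The main obstacle I expect is the rigorous justification of the first step: one must be sure that the type assigned to the immune line plays no special role in the event ``$I_\beta^*=1$''. Proposition \ref{ptal} says precisely that $I_\beta^*=1$ iff \emph{every} line — immune line included — carries type $1$ at time $\beta$, so there is no subtlety once that proposition is invoked; but one should note that the immune line is, by construction of the asymptotic pruned LD-ASG, always the topmost occupied level, and that the types $(J_\beta^k)_{1\le k\le\Ls_\beta}$ are assigned independently of $\Ls_\beta$ and of which level is immune, so conditioning on $\Ls_\beta=n$ indeed yields the factor $(1-x)^n$. A minor technical point is the interchange of limit and sum, handled by the dominated convergence theorem since the summands $P(\Ls_\beta=n)(1-(1-x)^n)$ are dominated by $P(\Ls_\beta=n)$ and one has weak convergence of $\Ls_\beta$; alternatively, invoke directly that convergence in distribution of $\Nb$-valued variables implies convergence of expectations of bounded functions.
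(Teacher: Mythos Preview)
Your proof is correct and follows essentially the same route as the paper: invoke Proposition~\ref{ptal} to reduce $P_x(I_\beta^*=0)$ to a functional of $\Ls_\beta$ via the i.i.d.\ type assignment, then pass to the limit using the ergodicity of $\Ls$ (Lemmas~\ref{nexp} and~\ref{lsd}) and identify the result with $h(x)$. The only cosmetic difference is that the paper decomposes $\{I_\beta^*=0\}$ according to the lowest type-$0$ level, obtaining directly the tail-sum form $\sum_{n\ge 0}x(1-x)^n P(\Ls_\beta>n)$, whereas you write the equivalent complement form $1-E[(1-x)^{\Ls_\beta}]$; these are algebraically identical.
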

\begin{proof}
From Proposition \ref{ptal}, we see that:
\begin{align*}
&P_x(I_\beta^*=0)=\sum\limits_{n=0}^\infty P_x(J^{n+1}_\beta=0,\,J^k_\beta=1\, \forall k\leq n, \Ls_\beta> n)\\
&=\sum\limits_{n=0}^\infty P_x(J^{n+1}_\beta=0,\,J_\beta^k=1\, \forall k\leq n)\,P_x( \Ls_\beta> n)=\sum\limits_{n=0}^\infty x(1-x)^{n}P_x( \Ls_\beta> n).
\end{align*}
Taking the limit when $\beta$ tends to infinity in both sides and using \eqref{convsd2}, we get
$$\lim\limits_{\beta\rightarrow\infty}P_x(I_\beta^*=0)=\sum\limits_{n=0}^\infty x(1-x)^{n}P_x( \Ls_\infty> n).$$
The result follows from the definition of $h$ and Lemma \ref{lsd}.
\end{proof}
\begin{remark}
Assume that we are in the stationary regime, i.e. under $P_{x_0^+}$, then the representative ancestral type coincides with the population average of the ancestral types in a related branching model (see \cite[Theorem 3.1]{GeBa03}). This is formalised in Lemma \ref{ratad}.
\end{remark}

\appendix
\section{\texorpdfstring{Some remarks on the Skorohod topology in $\Db_{\Nb}[0,\infty)$}{}}\label{A1}
For each $t\in(0,\infty)$, we denote by $\Dt{t}:=\Db_{\Nb}[0,t]$ the space of c\`{a}dl\`{a}g functions on $[0,t]$ with values on $\Nb$. We start by recalling the Skorohod topology in $\Dt{t}$. 

Let $\Ct{t}$ be the class of strictly increasing, continuous functions from $[0,t]$ onto itself. For $\lambda\in\Ct{t}$, we set
$$||\lambda||^\circ:=\sup\limits_{0\leq u<s\leq t}\left|\log\left(\frac{\lambda(s)-\lambda(u)}{s-u}\right)\right|.$$
Now, we define the metric $d_t^\circ$ in $\Dt{t}$ as follows:
$$d_t^\circ(f,g):=\inf\limits_{\lambda\in\Ct{t}}\{||\lambda||^0\vee||f-g\circ\lambda||_{t,\infty}\},$$
where $||f||_{t,\infty}:=\sup_{s\in[0,t]}|f(s)|$. The metric $d_t^\circ$ gives the Skorohod topology in $\Dt{t}$.

Similarly, we denote by $\Di:=\Db_{\Nb}[0,\infty)$ the space of c\`{a}dl\`{a}g functions on $[0,\infty)$ with values on $\Nb$. Let $\Ct{\infty}$ be the class of increasing, continuous functions from $[0,\infty)$ onto itself.

Now, we define, for each $m\in\Nb$, the function $x_m$ by setting $x_m(s):=1$ for $0\leq s\leq m-1$, $x_m(s):=m-s$ for $m-1\leq s\leq m$, and $x_m(s):=0$ for $s\geq m$. Finally, we define the metric $d_\infty^\circ$ in $\Di$ as follows:
$$d_\infty^\circ(f,g):=\sum\limits_{m=1}^\infty \frac{1\wedge d_m^\circ(x_m\, f,x_m \,g)}{2^m}.$$
The metric $d_\infty^\circ$ gives the Skorohod topology in $\Di$. 
\begin{lemma}\label{trunc}
For all $w\in\Di$ and $t\in[0,\infty)$, we have $$d_\infty^\circ(w,w(\cdot\wedge t))\leq \frac{1}{2^{\lfloor t\rfloor}}.$$
\end{lemma}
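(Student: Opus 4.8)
The plan is to exploit the fact that $w$ and $w(\cdot\wedge t)$ literally coincide on every initial interval $[0,m]$ with $m\le t$, so that the corresponding terms in the series defining $d_\infty^\circ$ vanish outright, and then to bound the tail of the series by a geometric sum.

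First I would fix $w\in\Di$ and $t\in[0,\infty)$, write $w_t:=w(\cdot\wedge t)$, and recall that by definition
$$d_\infty^\circ(w,w_t)=\sum_{m=1}^\infty\frac{1\wedge d_m^\circ(x_m\,w,\,x_m\,w_t)}{2^m}.$$
For every integer $m$ with $1\le m\le t$ and every $s\in[0,m]$ we have $s\le m\le t$, hence $s\wedge t=s$ and $w_t(s)=w(s)$; thus $w$ and $w_t$ agree on $[0,m]$, which forces $x_m\,w=x_m\,w_t$ as elements of $\Dt{m}$ and therefore $d_m^\circ(x_m\,w,x_m\,w_t)=0$. The positive integers $m$ satisfying $m\le t$ are exactly $1,\dots,\lfloor t\rfloor$, so the first $\lfloor t\rfloor$ summands are zero. (When $t<1$, $\lfloor t\rfloor=0$ and this step is vacuous, which is consistent with the claimed bound $1=2^{-0}$.)

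For the remaining indices $m\ge\lfloor t\rfloor+1$ I would simply use the trivial bound $1\wedge d_m^\circ(x_m\,w,x_m\,w_t)\le 1$, which gives
$$d_\infty^\circ(w,w_t)=\sum_{m=\lfloor t\rfloor+1}^\infty\frac{1\wedge d_m^\circ(x_m\,w,x_m\,w_t)}{2^m}\le\sum_{m=\lfloor t\rfloor+1}^\infty\frac{1}{2^m}=\frac{1}{2^{\lfloor t\rfloor}},$$
which is the desired inequality. There is no real obstacle here: the only point requiring a line of care is the identity $w_t|_{[0,m]}=w|_{[0,m]}$ for $m\le t$ (and hence the vanishing of $d_m^\circ$), together with correctly counting that there are $\lfloor t\rfloor$ such indices so that the geometric tail starts at $m=\lfloor t\rfloor+1$ and sums to $2^{-\lfloor t\rfloor}$.
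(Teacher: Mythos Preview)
Your proof is correct and follows exactly the same approach as the paper: observe that $d_m^\circ(x_m w,x_m w(\cdot\wedge t))=0$ for all integers $m\le t$, then bound the tail of the series by $\sum_{m=\lfloor t\rfloor+1}^\infty 2^{-m}=2^{-\lfloor t\rfloor}$. Your version is in fact slightly more detailed than the paper's, spelling out why $w$ and $w(\cdot\wedge t)$ agree on $[0,m]$ and addressing the degenerate case $t<1$.
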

\begin{proof}
 Note first that for all $m\leq t$, $d_m^\circ(x_m\, w,x_m\, w(\cdot\wedge t))=0$. Therefore, we have
$$d_\infty^\circ(w,w(\cdot\wedge n))=\sum\limits_{m=\lfloor t\rfloor+1}^\infty \frac{1\wedge d_m^\circ(x_m\, w,x_m \,w(\cdot\wedge t))}{2^m}\leq \sum\limits_{m=\lfloor t\rfloor+1}^\infty \frac{1}{2^m}=\frac{1}{2^{\lfloor t\rfloor}}.$$
\end{proof}
Now, given $k\in\Nb$ and $w\in\Di$, we define $T_k(w):=\inf\{s\geq 0: w(s)=k\}$ and $\Di(k):=\{w\in\Di:\, T_k(w)<\infty\}$.
\begin{lemma}\label{ctk}
The function $T_k:\Di(k)\rightarrow [0,\infty)$ is continuous.
\end{lemma}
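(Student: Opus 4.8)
The plan is to prove sequential continuity, which suffices since $\Di(k)$ (with the topology induced by $d_\infty^\circ$) and $[0,\infty)$ are metric. So suppose $w_n\to w$ in $\Di(k)$ and set $\tau:=T_k(w)<\infty$; I want $T_k(w_n)\to\tau$. The crucial structural remark is that an element of $\Di$ is locally constant with isolated jumps (a càdlàg function valued in the discrete set $\Nb$ can have a left limit at a point only if it is eventually constant from the left there). In particular $w(\tau)=k$ and $w(s)\neq k$ for $s<\tau$: the second is the definition of the infimum, and for the first, if $\tau\notin\{s:w(s)=k\}$ I would pick $s_j\downarrow\tau$ in this set and use right-continuity to get $w(\tau)=\lim_j w(s_j)=k$.

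Next I would localize. Fix an integer $m>\tau+2$. Since $d_\infty^\circ(w_n,w)=\sum_{j\ge1}2^{-j}\bigl(1\wedge d_j^\circ(x_j w_n,x_j w)\bigr)\to0$ and all terms are nonnegative, $d_m^\circ(x_m w_n,x_m w)\to0$; choose $\lambda_n\in\Ct{m}$ with
\[
\|\lambda_n\|^\circ\vee\|x_m w_n-(x_m w)\circ\lambda_n\|_{m,\infty}\le d_m^\circ(x_m w_n,x_m w)+\tfrac1n=:\varepsilon_n\to0 .
\]
From $\|\lambda_n\|^\circ\to0$ one gets, comparing with the left endpoint, $e^{-\|\lambda_n\|^\circ}s\le\lambda_n(s)\le e^{\|\lambda_n\|^\circ}s$ on $[0,m]$, hence $\lambda_n\to\mathrm{id}$ uniformly; and since $\|\lambda_n^{-1}\|^\circ=\|\lambda_n\|^\circ$, also $\lambda_n^{-1}\to\mathrm{id}$ uniformly on $[0,m]$. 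On $[0,m-2]$ the taper $x_m$ equals $1$, and for $n$ large $\lambda_n([0,m-2])\subseteq[0,m-1]$, where $x_m$ also equals $1$; therefore $|w_n(s)-w(\lambda_n(s))|\le\varepsilon_n$ for all $s\in[0,m-2]$. Since $w_n,w$ are integer-valued and $\varepsilon_n\to0$, for $n$ large this forces the exact identity $w_n(s)=w(\lambda_n(s))$ for all $s\in[0,m-2]$.

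From this identity the conclusion is immediate: for $n$ large $\lambda_n^{-1}(\tau)<m-2$ (because $\lambda_n^{-1}(\tau)\to\tau$), and for $s<\lambda_n^{-1}(\tau)$ we have $\lambda_n(s)<\tau$, so $w_n(s)=w(\lambda_n(s))\neq k$, while $w_n(\lambda_n^{-1}(\tau))=w(\tau)=k$. Hence $T_k(w_n)=\lambda_n^{-1}(\tau)$ for all large $n$, and letting $n\to\infty$ gives $T_k(w_n)=\lambda_n^{-1}(\tau)\to\tau=T_k(w)$. I expect the only subtle points to be the bookkeeping around the taper $x_m$ (ensuring $[0,m-2]$ is mapped into $\{x_m=1\}$) and the passage from the $\varepsilon_n$-inequality to the exact identity via integrality; the rest is routine.
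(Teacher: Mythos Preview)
Your proof is correct and follows the same idea as the paper's: exploit that Skorohod convergence gives time-changes $\lambda_n\to\mathrm{id}$ under which $w_n$ and $w$ agree exactly near $\tau$ (by integer-valuedness), whence $T_k(w_n)$ equals $\lambda_n^{-1}(\tau)$ (or $\lambda_n(\tau)$, depending on the direction of the composition) and converges to $\tau$. The only difference is that the paper invokes \cite[Theorem~16.1]{Bi99} to obtain global time-changes $\lambda_n\in\Ct{\infty}$ with $\sup_t|\lambda_n(t)-t|\to0$ and $\sup_{t\le m}|w_n(\lambda_n(t))-w(t)|\to0$, which lets it skip your localization and taper bookkeeping; your version is slightly longer but entirely self-contained from the definition of $d_\infty^\circ$.
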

\begin{proof}
Let us consider $\{w_n\}_{n\in\Nb}\subset \Di(k)$ and $w\in\Di(k)$ such that
$$d_\infty^\circ(w_n,w)\xrightarrow[n\rightarrow\infty]{}0.$$
We have to show that $T_k(w_n)\xrightarrow[n\rightarrow\infty]{}T_k(w)$. From \cite[Theorem 16.1]{Bi99}, there is a sequence $\{\lambda_n\}_{n\in\Nb}\subset\Ct{\infty}$ such that
$$\sup\limits_{t<\infty}|\lambda_n(t)-t|\xrightarrow[n\rightarrow\infty]{}0\quad\textrm{and}\quad\sup\limits_{t\leq m}|w_n(\lambda_n(t))-w(t)|\xrightarrow[n\rightarrow\infty]{}0,$$
for all $m\in\Nb$. Therefore, defining $m_k:=T_k(w)+1$, we deduce that for all $\varepsilon>0$, there is $n_0(\varepsilon)\in\Nb$, such that, for all $n\geq n_0(\varepsilon)$:
$$\sup\limits_{t<\infty}|\lambda_n(t)-t|\leq\varepsilon\quad\textrm{and}\quad\sup\limits_{t\leq m}|w_n(\lambda_n(t))-w(t)|\leq\varepsilon.$$
This implies for $\varepsilon\in(0,1)$ that for all $n\geq n_0(\varepsilon)$
$$\sup\limits_{t\leq m_k}|w_n(\lambda_n(t))-w(t)|=0.$$
Therefore, $w_n(\lambda_n(T_k(w)))=k$ and, for each $t<T_k(w)$, $w_n(\lambda_n(t))\neq k$. This implies that $\lambda_n(T_k(w))=T_k(w_n)$. Consequently, we have
$$|T_k(w_n)-T_k(w)|=|\lambda_n(T_k(w))-T_k(w)|\leq \varepsilon.$$
The continuity of $T_k$ follows.
\end{proof}
\section{A related 2-type branching process}\label{bm}
Assume that $u>0$ and $s>0$. We consider a population composed of individuals of two types, $0$ and $1$, evolving in the following way. Each individual of type $0$ waits for an exponential time with parameter $1+s+u\nu_1$, and then splits or mutates to type $1$ with probabilities $(1+s)/(1+s+u\nu_1)$ and $u\nu_1/(1+s+u\nu_1)$, respectively. On the other hand, an individual of type $1$ waits for an exponential time with parameter $1+u\nu_0$, and then splits or mutate to type $0$ with probabilities $1/(1+u\nu_0)$ and $u\nu_0/(1+u\nu_0)$, respectively. In other words, the population evolves according to a 2-type branching process in continuous time. We summarize here the asymptotic properties of this model (see e.g. \cite{BG07} and \cite{BB08}).

In this framework, consider the process $\widehat{Y}=(\widehat{Y}_t^0, \widehat{Y}_t^1)_{t\geq 0}$, where $\widehat{Y}_t^j$, $j\in\{0,1\}$ denotes the number of individuals of type $j$ at time $t$. Additionally, we denote by $E_i[\widehat{Y}_t^j]$ the expected number of $j$-individuals at time $t$ in a population started by a single $i$-individual at time $0$. It is well known that $E_i[\widehat{Y}_t^j]=\left(e^{At}\right)_{ij}$, where $A$ is the matrix 
\begin{equation}\label{ldeteq}
A:=\begin{pmatrix}
     1+s-u\nu_1&u\nu_1\\u\nu_0&1-u\nu_0\\
    \end{pmatrix}.
\end{equation}
The asymptotic properties of $\widehat{Y}$ are expressed in terms of the largest eigenvalue of the matrix $A$, $\lambda_+=1+sx_0^+$, and the corresponding, properly normalized, left and right eigenvectors. Additionally, if $\pi:=(\pi_0,\pi_1)$ denotes the left eigenvector associated to $\lambda_+$ normalized such that $\pi_0+\pi_1=1$, then $\pi_0:=x_0^+$ and $\pi_1:=1-x_0^+.$
The right eigenvector of $A$ associated to $\lambda_+$, $\bar{h}=(\bar{h}_0,\bar{h}_1)$, normalized such that $\bar{h}_0\pi_0+\bar{h}_1\pi_1=1$ is given by
$$\bar{h}_0=\frac{u\nu_0+sx_0^+}{u\nu_0+s{x_0^+}^2}\quad\textrm{and}\quad \bar{h}_1=\frac{u\nu_0}{u\nu_0+s{x_0^+}^2}.$$ 

We know from \eqref{at} that $x_0^+$ is the asymptotic proportion of individuals of type $0$ in the deterministic 2-type selection mutation model. The same interpretation for $x_0^+$ holds in the branching model, since
$$\lim\limits_{t\rightarrow\infty}\frac{\widehat{Y}_t^0}{\widehat{Y}_t^0+\widehat{Y}_t^1}=x_0^+,\quad\textrm{conditionally on survival}.$$
Moreover, we have
$$\lim\limits_{t\rightarrow\infty}\frac{\ln\left(\widehat{Y}_t^0+\widehat{Y}_t^1\right)}{t}=1+sx_0^+,\quad\textrm{conditionally on survival},$$
i.e. $\lambda_+$ is the asymptotic growth rate of the population. 
In addition, $\bar{h}_i$, for $i\in\{0,1\}$, measures the asymptotic mean offspring size of an $i$ individual, relative to the total size of the population. More precisely, we have
$$\lim\limits_{t\rightarrow\infty}E^i\left[\widehat{Y}_t^0+\widehat{Y}_t^1\right]\,e^{-\lambda_+ t}=\bar{h}_i.$$
Finally, the vector $\alpha:=(\alpha_0,\,\alpha_1)$ given by $\alpha_i:=\bar{h}_i\pi_i$, for $i\in\{0,1\}$, describes the population average of the ancestral types and is called the ancestral distribution. 
\begin{lemma}[representative ancestral type and ancestral distribution]\label{ratad}
We have
$$\bar{h}_0=\frac{\Delta+(s+u)\sqrt{\Delta}}{2\Delta}=\frac{h(x_0^+)}{x_0^+},$$
where $\Delta:=(s-u)^2+4u\nu_0s.$ In particular, we have $h(x_0^+)=\alpha_0$, i.e. the representative ancestral type distribution starting with a stationary configuration equals the ancestral distribution in the 2-type branching model described here.
 \end{lemma}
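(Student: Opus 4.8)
The plan is to reduce both equalities in the statement to elementary algebra based on two facts already at hand. First, $x_0^+$ is the stable fixed point of \eqref{deteq}, so after using $\nu_0+\nu_1=1$ the fixed-point equation becomes
\[
s(x_0^+)^2-(s-u)x_0^+-u\nu_0=0,
\]
and, by \eqref{xo+-}, $sx_0^+=\tfrac12\bigl(s-u+\sqrt{\Delta}\bigr)$. Second, by Remark \ref{ellx}, $\ell_-=s(1-x_0^+)/(u\nu_1)$. I will also use repeatedly the trivial identity $4su\nu_0=\Delta-(s-u)^2$, which is merely the definition of $\Delta$.

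First I would prove the closed form for $\bar h_0$. Starting from $\bar h_0=(u\nu_0+sx_0^+)/(u\nu_0+s(x_0^+)^2)$ and substituting the quadratic relation into the denominator gives $u\nu_0+s(x_0^+)^2=2u\nu_0+(s-u)x_0^+$. Plugging in $sx_0^+=\tfrac12(s-u+\sqrt{\Delta})$ and simplifying with $4su\nu_0=\Delta-(s-u)^2$ turns the denominator into $\sqrt{\Delta}\,(\sqrt{\Delta}+s-u)/(2s)$ and the numerator into $\tfrac12(2u\nu_0+s-u+\sqrt{\Delta})$; dividing the two and applying $4su\nu_0=\Delta-(s-u)^2$ once more collapses the quotient to $(\sqrt{\Delta}+s+u)/(2\sqrt{\Delta})$, that is, to $(\Delta+(s+u)\sqrt{\Delta})/(2\Delta)$. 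This is the first identity.

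Next I would identify $\bar h_0$ with $h(x_0^+)/x_0^+$. By Theorem \ref{chk}, $h(x_0^+)=x_0^+/(1-\ell_-(1-x_0^+))$, and Remark \ref{ellx} gives $\ell_-(1-x_0^+)=s(1-x_0^+)^2/(u\nu_1)$, so $h(x_0^+)/x_0^+=u\nu_1/(u\nu_1-s(1-x_0^+)^2)$. To check that this equals $\bar h_0=(u\nu_0+sx_0^+)/(u\nu_0+s(x_0^+)^2)$ I would cross-multiply; after cancelling the common term $u\nu_0\,u\nu_1$, dividing through by $s(1-x_0^+)$, and using $\nu_0+\nu_1=1$, the remaining identity is precisely the quadratic $s(x_0^+)^2-(s-u)x_0^+-u\nu_0=0$ satisfied by $x_0^+$. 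Hence $\bar h_0=h(x_0^+)/x_0^+$, and since $\pi_0=x_0^+$ and $\alpha_0=\bar h_0\pi_0$ by definition, we obtain $\alpha_0=\bar h_0\,x_0^+=h(x_0^+)$, as claimed.

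There is no genuine obstacle here: every step is a finite computation, and the only care needed is bookkeeping — consistently exploiting the quadratic relation for $x_0^+$ together with $4su\nu_0=\Delta-(s-u)^2$, and checking that the division by $1-x_0^+$ in the third step is legitimate, which reduces to $x_0^+<1$, equivalently to $\nu_0<1$ (always true). The one conceptual remark worth recording is that both equalities in the lemma are, in the end, reformulations of the single defining quadratic of $x_0^+$.
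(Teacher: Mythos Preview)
Your argument is correct and follows essentially the same computational route as the paper. The only noticeable variation is in the second equality: the paper evaluates $h(x_0^+)/x_0^+$ separately in closed form as a function of $\Delta$ (via $2su\nu_1(1-\ell_-(1-x_0^+))=-\Delta+(s+u)\sqrt{\Delta}$) and then matches it with the expression already obtained for $\bar h_0$, whereas you bypass that second closed-form computation by cross-multiplying $\bar h_0=h(x_0^+)/x_0^+$ directly and reducing to the quadratic for $x_0^+$. Both are straightforward algebra; your shortcut is marginally cleaner, and your remark that everything ultimately rests on the single defining quadratic of $x_0^+$ is the right conceptual takeaway.
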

\begin{proof}
 From the definition of $x_0^+$ and $\Delta$, we have $2(u\nu_0+sx_0^+)=2u\nu_0+s-u+\sqrt{\Delta},$
 and that $2s(u\nu_0+s{x_0^+}^2)=\Delta+(s-u)\sqrt{\Delta}.$
Thus, we obtain
$$\bar{h}_0=\frac{s(2u\nu_0+s-u+\sqrt{\Delta})}{\Delta+(s-u)\sqrt{\Delta}}=\frac{s(2u\nu_0+s-u+\sqrt{\Delta})}{\Delta-(s-u)\sqrt{\Delta}}\times \frac{\Delta-(s-u)\sqrt{\Delta}}{\Delta-(s-u)\sqrt{\Delta}},$$
and the first identity follows after simplifications. In order to obtain the second identity, we note that, from the relation between $\ell_-$ and $x_0^+$,
$$2su\nu_1(1-\ell_-(1-x_0^+))=2s(2u\nu_1-u-s+(s+u)x_0^+)=-\Delta+(s+u)\sqrt{\Delta}.$$
As a consequence, we have
$$\frac{h(x_0^+)}{x_0^+}=\frac{2su\nu_1}{-\Delta+(s+u)\sqrt{\Delta}},$$
and the second identity follows using similar arguments as before.
\end{proof}
\subsection*{Acknowledgements}
I would like to thank Ellen Baake for bringing this problem to my attention and for enlightening the development of this work. I am also grateful to Tom Kurtz and Anton Wakolbinger for stimulating and fruitful discussions. This project received financial support from the Priority Programme \textit{Probabilistic Structures in Evolution} (SPP 1590), which is funded by Deutsche Forschungsgemeinschaft.
\bibliographystyle{acm}
\bibliography{reference}

\begin{thebibliography}{10}

\bibitem{BG07}
{\sc Baake, E., and Georgii, H.}
\newblock Mutation, selection, and ancestry in branching models: a variational
  approach.
\newblock {\em J. Math. Biol. 54}, 2 (2007), 257--303.

\bibitem{BB08}
{\sc Bialowons, R., and Baake, E.}
\newblock Ancestral processes with selection: {B}ranching and {M}oran models.
\newblock {\em Banach Center Publications 80\/} (2008), 33--52.

\bibitem{Bi99}
{\sc Billingsley, P.}
\newblock {\em Convergence of probability measures}, second~ed.
\newblock Wiley Series in Probability and Statistics: Probability and
  Statistics. John Wiley \& Sons, Inc., New York, 1999.
\newblock A Wiley-Interscience Publication.

\bibitem{Co15}
{\sc Cordero, F.}
\newblock The deterministic limit of the moran model: a uniform central limit
  theorem.
\newblock {\em http://arxiv.org/abs/1508.05231\/} (2015).

\bibitem{CK70}
{\sc Crow, J., and Kimura, M.}
\newblock {\em An Introduction to Population Genetics Theory}.
\newblock Harper and Row, New York, 1970.

\bibitem{Du08}
{\sc Durrett, R.}
\newblock {\em Probability models for {DNA} sequence evolution}, second~ed.
\newblock Probability and its Applications (New York). Springer, New York,
  2008.

\bibitem{Eth}
{\sc Etheridge, A.}
\newblock {\em Some mathematical models from population genetics}, vol.~2012 of
  {\em Lecture Notes in Mathematics}.
\newblock Springer, Heidelberg, 2011.
\newblock Lectures from the 39th Probability Summer School held in Saint-Flour,
  2009.

\bibitem{Ku86}
{\sc Ethier, S.~N., and Kurtz, T.~G.}
\newblock {\em Markov processes}.
\newblock Wiley Series in Probability and Mathematical Statistics: Probability
  and Mathematical Statistics. John Wiley \& Sons, Inc., New York, 1986.
\newblock Characterization and convergence.

\bibitem{Fe02}
{\sc Fearnhead, P.}
\newblock The common ancestor at a nonneutral locus.
\newblock {\em Journal of Applied Probability 39}, 1 (2002), 38--54.

\bibitem{GeBa03}
{\sc Georgii, H.-O., and Baake, E.}
\newblock Supercritical multitype branching processes: the ancestral types of
  typical individuals.
\newblock {\em Adv. in Appl. Probab. 35}, 4 (2003), 1090--1110.

\bibitem{HJ85}
{\sc Horn, R.~A., and Johnson, C.~R.}
\newblock {\em Matrix analysis}.
\newblock Cambridge University Press, Cambridge, 1985.

\bibitem{Ki62}
{\sc Kimura, M.}
\newblock On the probability of fixation of mutant genes in a population.
\newblock {\em Genetics 47\/} (jun 1962), 713--719.

\bibitem{KB13}
{\sc Kluth, S., and Baake, E.}
\newblock The moran model with selection: Fixation probabilities, ancestral
  lines, and an alternative particle representation.
\newblock {\em Theoretical Population Biology 90}, 0 (2013), 104--112.

\bibitem{KHB13}
{\sc Kluth, S., Hustedt, T., and Baake, E.}
\newblock The common ancestor process revisited.
\newblock {\em Bull. Math. Biol. 75}, 11 (2013), 2003--2027.

\bibitem{KroNe97}
{\sc Krone, S.~M., and Neuhauser, C.}
\newblock Ancestral processes with selection.
\newblock {\em Theoretical Population Biology 51}, 3 (1997), 210--237.

\bibitem{Ku76}
{\sc Kurtz, T.~G.}
\newblock Limit theorems and diffusion approximations for density dependent
  {M}arkov chains.
\newblock {\em Math. Programming Stud.}, 5 (1976), 67--78.
\newblock Stochastic systems: modeling, identification and optimization, I
  (Proc. Sympos., Univ. Kentucky, Lexington, Ky., 1975).

\bibitem{Ku81}
{\sc Kurtz, T.~G.}
\newblock {\em Approximation of population processes}, vol.~36 of {\em CBMS-NSF
  Regional Conference Series in Applied Mathematics}.
\newblock Society for Industrial and Applied Mathematics (SIAM), Philadelphia,
  Pa., 1981.

\bibitem{LKBW15}
{\sc Lenz, U., Kluth, S., Baake, E., and Wakolbinger, A.}
\newblock Looking down in the ancestral selection graph: A probabilistic
  approach to the common ancestor type distribution.
\newblock {\em Theoretical Population Biology 103\/} (2015), 27--37.

\bibitem{Ma09}
{\sc Mano, S.}
\newblock Duality, ancestral and diffusion processes in models with selection.
\newblock {\em Theoretical Population Biology 75}, 2--3 (2009), 164--175.

\bibitem{NeKro97}
{\sc Neuhauser, C., and Krone, S.}
\newblock The genealogy of samples in models with selection.
\newblock {\em Genetics 145}, 2 (February 1997), 519--534.

\bibitem{No98}
{\sc Norris, J.~R.}
\newblock {\em Markov chains}, vol.~2 of {\em Cambridge Series in Statistical
  and Probabilistic Mathematics}.
\newblock Cambridge University Press, Cambridge, 1998.
\newblock Reprint of 1997 original.

\bibitem{Popf13}
{\sc Pokalyuk, C., and Pfaffelhuber, P.}
\newblock The ancestral selection graph under strong directional selection.
\newblock {\em Theoretical Population Biology 87\/} (2013), 25--33.
\newblock Coalescent Theory.

\bibitem{Rio64}
{\sc Riordan, J.}
\newblock Inverse relations and combinatorial identities.
\newblock {\em Amer. Math. Monthly 71\/} (1964), 485--498.

\bibitem{Ta07}
{\sc Taylor, J.~E.}
\newblock The common ancestor process for a {W}right-{F}isher diffusion.
\newblock {\em Electron. J. Probab. 12\/} (2007), no. 28, 808--847.

\bibitem{To92}
{\sc Todorovic, P.}
\newblock {\em An introduction to stochastic processes and their applications}.
\newblock Springer Series in Statistics: Probability and its Applications.
  Springer-Verlag, New York, 1992.

\bibitem{Us94}
{\sc Usmani, R.~A.}
\newblock Inversion of {J}acobi's tridiagonal matrix.
\newblock {\em Comput. Math. Appl. 27}, 8 (1994), 59--66.

\end{thebibliography}
\end{document}